\newtheorem{thm}{Theorem}[section]
\newtheorem{prop}[thm]{Proposition}
\newtheorem{lemma}[thm]{Lemma}
\newtheorem{cor}[thm]{Corollary}
\theoremstyle{remark}
\newtheorem{remark}[thm]{Remark}
\newcommand{\id}{{\rm{id}}}
\newcommand{\Ad}{{\rm{Ad}}}
\newcommand{\BC}{\mathbf C}
\newcommand{\BZ}{\mathbf Z}
\newcommand{\BT}{\mathbf T}
\newcommand{\la}{\langle}
\newcommand{\ra}{\rangle}
\newcommand{\Ind}{{\rm{Ind}}}
\newcommand{\Aut}{{\rm{Aut}}}
\newcommand{\Equi}{{\rm{Equi}}}
\newcommand{\Pic}{{\rm{Pic}}}
\newcommand{\Int}{{\rm{Int}}}
\newcommand{\Ker}{{\rm{Ker}}}
\newcommand{\Ima}{{\rm{Im}}}
\title{The Picard groups of unital inclusions of unital $C^*$-algebras induced by involutive equivalence bimodules}
\author{Kazunori Kodaka}
\address{Department of Mathematical Sciences, Faculty of Science, Ryukyu
\endgraf
University, Nishihara-cho, Okinawa, 903-0213, Japan}
\address{\sl{E-mail address}: \rm{kodaka@math.u-ryukyu.ac.jp}}
\keywords{inclusions of $C^*$-algebras, involutive equivalence bimodules, Picard groups,
strong Morita equivalence}
\subjclass[2010]{Primary 46L05, Secondary 46L08}
\begin{document}
\maketitle
\begin{abstract}
Let $A$ be a unital $C^*$-algebra and $X$ an invulutive $A-A$-equivalence bimodule.
Let $A\subset C_X$ be the unital inclusion of unital $C^*$-algebras induced by $X$.
We suppose that $A' \cap C_X =\BC 1$. We shall compute the Picard group of the unital
inclusion $A\subset C_X$.
\end{abstract}
 
\section{Introduction}\label{sec:intro} Let $A$ be a unital $C^*$-algebra and $X$ an
$A-A$-equivalence bimodule. Following \cite {KT0:involution}, we say that $X$ is
\sl
involutive
\rm
if there exists a conjugate linear map
$x\mapsto x^{\natural}$ on $X$ such that
\newline
(1) $(x^{\natural})^{\natural}=x$, \quad $x\in X$,
\newline
(2) $(a\cdot x\cdot b)^{\natural}=b^* \cdot x^{\natural}\cdot a^*$, \quad $x\in X$, \quad $a, b\in A$,
\newline
(3) ${}_A \la x, y^{\natural} \ra =\la x^{\natural} , y \ra_A$, \quad $x, y\in X$,
\newline
where ${}_A \la -, - \ra$ and $\la -, - \ra_A$ are the left and the right $A$-valued inner products
on $X$, respectively. We call the above conjugate linear map an
\sl
involution
\rm
on $X$. For each $A-A$-equivalence bimodule $X$, $\widetilde{X}$ denotes its dual
$A-A$-equivalence bimodule. For each $x\in X$, $\widetilde{x}$ denotes the
element in $\widetilde{X}$ induced by $x$. For each involutive $A-A$-equivalence bimodule
$X$, let $L_X$ be the linking $C^*$-algebra for $X$ defined in Brown, Green and Rieffel
\cite {BGR:linking}. Following \cite {KT0:involution}, we define the $C^*$-subalgebra $C_X$ of $L_X$ by
$$
C_X =\{ \begin{bmatrix} a & x \\
\widetilde{x^{\natural}} & a \end{bmatrix} \, | \, a\in A, \, x\in X \} .
$$
We regard $A$ as a $C^*$-subalgebra of $C_X$, that is,
$A=\{\begin{bmatrix} a & 0 \\ 0 & a \end{bmatrix} \, | \, 
a\in A \}$.
\par
In \cite {Kodaka:Picard}, we defined the Picard of unital inclusion of unital $C^*$-algebras $A\subset C$.
We denote it by $\Pic(A, C)$. In this paper, we shall compute $\Pic(A, C_X )$ under the assumption
that $A' \cap C_X =\BC 1$. Let us explain the strategy of computing $\Pic(A, C_X )$. Let $f_A$ be
the homomorphism of $\Pic(A, C_X )$ to $\Pic(A)$ defined in \cite {Kodaka:Picard}, where $\Pic(A)$ is the
Picard group of $A$. We compute $\Ker f_A$ and $\Ima f_A$, the kernel of $f_A$ and the image of
$f_A$, respectively and we construct a homomorphism $g_A$ of $\Ima f_A$ to $\Pic(A, C_X )$ with
$f_A \circ g_A =\id_{\Pic(A)}$. We can compute $\Pic(A, C_X )$ in the above way.

\section{Preliminaries}\label{sec:pre} We recall the definition of the Picard group for
a unital inclusion of unital $C^*$-algebras $A\subset C$. Let $Y$ be a $C-C$-equivalence bimodule and $X$ its
closed subspace satisfying Conditions (1), (2) in \cite [Definition 2.1]{KT4:morita}. Let $\Equi(A, C)$
be the set of all such pairs $(X, Y)$ as above. We define an equivalence relation $``\sim "$ as follows:
For $(X, Y), (Z, W)\in \Equi(A, C)$, $(X, Y)\sim (Z, W)$ in $\Equi(A, C)$ if and only if there is a $C-C$-equivalence
bimodule isomorphism $\Phi$ of $Y$ onto $W$ such that the restriction of $\Phi$ to $X$, $\Phi|_X$ is
an $A-A$-equivalence bimodule isomorphism $X$ onto $Z$. We denote by $[X, Y]$, the
equivalence class of $(X, Y)$ in $\Equi(A, C)$. Let $\Pic(A, C)=\Equi(A, C)/\! \sim$.
We define the product in $\Pic(A, C)$ as follows: For $(X, Y), (Z, W)\in\Pic(A, C)$
$$
[X, Y][Z, W]=[X\otimes_A Z \, , \, Y\otimes_C W],
$$
where the $A-A$-equivalence bimodule $X\otimes_A Z$ is identified with the closed subspace
$``X\otimes_C Z"$ of $Y\otimes_C W$ by \cite [Lemma 3.1]{Kodaka:Picard} and
$``X\otimes_C Z"$ is defined by the closure of linear span of the set
$$
\{x\otimes z\in Y\otimes_C W \, | \, x\in X, \, z\in Z \}
$$
by \cite {Kodaka:Picard} and easy computations, $Y\otimes_C W$ and its closed
subspace $X\otimes_A Z$ satisfy Conditions (1), (2) in \cite [Definition 2.1]{KT4:morita}
and $\Pic (A, C)$ is a group. We regard $(A, C)$ as an element in $\Equi (A, C)$ in the
evident way. Then $[A, C]$ is unit element in $\Equi (A, C)$ in $\Pic(A, C)$. For any element
$(X, Y)\in\Equi(A, C)$, $(\widetilde{X}, \widetilde{Y})\in\Equi(A, C)$ and $[\widetilde{X}, \widetilde{Y}]$
is the inverse element of $[X, Y]$ in $\Pic(A, C)$. We call the group $\Pic (A, C)$ defined in the
above, the Picard group of the unital inclusion of unital $C^*$-algebras $A\subset C$.
\par
Let $f_A$ be the homomorphism of $\Pic(A, C)$ to $\Pic(A)$ defined by
$$
f_A ([X, Y])=[X]
$$
for any $(X, Y)\in \Equi(A, C)$.

\section{Kernel}\label{sec:kernel} Let $A$ be a unital $C^*$-algebra and $X$ an involutive
$A-A$-equivalence bimodule. Let $A\subset C_X$ be the unital inclusion of unital $C^*$-algebras induced by
$X$ and we suppose that $A' \cap C_X =\BC1$. Let $f_A$ be the homomorphism of $\Pic(A, C_X )$
to $\Pic(A)$ defined by
$$
f_A ([M, N])=[M]
$$
for any $(M, N)\in \Equi(A, C_X )$. In this section, we compute $\Ker f_A$. Let $(M, N)\in \Equi (A, C_X )$.
We suppose that $[M, N]\in\Ker f_A$. Then $[M]=[A]$ in $\Pic(A)$ and by \cite [Lemma 7.5]{Kodaka:Picard},
there is a $\beta\in \Aut_0 (A, C_X )$ such that
$$
[M, N]=[A, N_{\beta}]
$$
in $\Pic(A, C_X)$ where $\Aut_0 (A, C_X )$ is the group of all automorphisms $\beta$
such that $\beta(a)=a$ for any $a\in A$ and $N_{\beta}$ is the
$C_X -C_X$-equivalence bimodule induced by $\beta$ which is defined in \cite [Section 2]{Kodaka:Picard}.
By the above discussions, we obtain
the following lemma.

\begin{lemma}\label{lem:ker1} With the above notation,
$$
\Ker f_A =\{[A, N_{\beta}]\in\Pic(A, C_X ) \, | \, \beta\in \Aut_0 (A, C_X ) \} .
$$
\end{lemma}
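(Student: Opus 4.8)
The plan is to prove the two inclusions separately. For $\Ker f_A \subseteq \{[A, N_{\beta}] \mid \beta\in\Aut_0(A, C_X)\}$, I would simply invoke the discussion preceding the statement: if $[M, N]\in\Ker f_A$, then by the definition of $f_A$ we have $[M]=[A]$ in $\Pic(A)$, and \cite[Lemma 7.5]{Kodaka:Picard} then produces a $\beta\in\Aut_0(A, C_X)$ with $[M, N]=[A, N_{\beta}]$ in $\Pic(A, C_X)$. Hence every element of $\Ker f_A$ belongs to the right-hand set. The substantive input here is entirely the cited lemma, whose applicability rests on the standing hypothesis $A'\cap C_X=\BC 1$.

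For the reverse inclusion I would fix $\beta\in\Aut_0(A, C_X)$ and verify two things. First, that $(A, N_{\beta})$ is genuinely a pair in $\Equi(A, C_X)$: since $N_{\beta}$ coincides with $C_X$ as a left $C_X$-module and $\beta$ fixes $A$ pointwise, the copy of $A$ sitting inside $C_X=N_{\beta}$ is carried to itself by the $\beta$-twisted right action, and the left and right inner products restrict to the canonical $A$-valued inner products. Thus $A$ is a closed subspace of $N_{\beta}$ satisfying Conditions (1), (2) of \cite[Definition 2.1]{KT4:morita}, with first component the trivial $A-A$-equivalence bimodule $A$. Second, that $[A, N_{\beta}]\in\Ker f_A$, which is immediate from $f_A([A, N_{\beta}])=[A]$, the unit of $\Pic(A)$.

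Putting the two inclusions together gives the asserted equality. I expect the only real point requiring care to be the membership $(A, N_{\beta})\in\Equi(A, C_X)$; this is precisely where restricting to $\Aut_0(A, C_X)$ rather than to all of $\Aut(C_X)$ is essential, since it guarantees that the structure $N_{\beta}$ induces on $A$ is exactly that of the trivial bimodule. Everything else follows formally from the definitions of $f_A$ and of the induced bimodule $N_{\beta}$.
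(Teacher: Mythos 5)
Your proof is correct and follows essentially the same route as the paper: the forward inclusion is exactly the paper's appeal to \cite[Lemma 7.5]{Kodaka:Picard} under the standing hypothesis $A'\cap C_X=\BC 1$, while the reverse inclusion, which the paper leaves implicit, is the formal verification you give that $(A, N_{\beta})\in\Equi(A, C_X)$ with $A$ carrying the trivial bimodule structure and that $f_A([A, N_{\beta}])=[A]$ is the unit of $\Pic(A)$. Nothing further is needed.
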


Let $\Aut(A, C_X )$ be the group of all automorphisms $\alpha$ of $C_X$ such that the
restriction of $\alpha$ to $A$, $\alpha|_A$ is an automorphism of $A$. Then $\Aut_0 (A, C_X )$
is a normal subgroup of $\Aut(A, C_X )$. Let $\pi$ be the homomorphism of $\Aut(A, C_X )$ to
$\Pic(A, C_X )$ defined by
$$
\pi(\alpha)=[M_{\alpha}, N_{\alpha}]
$$
for any $\alpha\in\Aut(A, C_X )$, where $(M_{\alpha}, N_{\alpha})$ is the element
in $\Equi(A, C_X )$ induced by $\alpha\in \Aut(A, C_X )$ (See \cite [Section 3]{Kodaka:Picard}).
By Lemma \ref{lem:ker1}, $\pi(\Aut_0 (A, C_X ))=\Ker f_A$ and \cite [Lemma 3.4]{Kodaka:Picard},
$$
\Ker \pi \cap \Aut_0 (A, C_X )=\Int (A, C_X )\cap\Aut_0 (A, C_X ) ,
$$
where $\Int(A, C_X )$ is the group of all $\Ad(u)$ such that $u$ is a unitary element in $A$. Hence
\begin{align*}
\Ker \pi \cap \Aut_0 (A, C_X ) & =\{\Ad(u)\in \Aut_0 (A, C_X ) \, | \, \text{$u$ is a unitary element in $A$} \} \\
& =\{\Ad(u)\in\Aut_0 (A, C_X ) \, | \, \text{$u$ is a unitary element in $A' \cap A$} \} .
\end{align*}
Since $A' \cap C_X =\BC1$, $A' \cap A=\BC1$. Thus we can see that $\Ker \pi \cap \Aut_0 (A, C_X )=\{1\}$.
It follows that we can obtain that the following lemma.

\begin{lemma}\label{lem:ker2} With the above notation, $\Ker f_A \cong \Aut_0 (A, C_X )$.
\end{lemma}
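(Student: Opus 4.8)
The plan is to obtain the isomorphism as an instance of the first isomorphism theorem, applied to the restriction of the homomorphism $\pi$ to the subgroup $\Aut_0(A, C_X)$. The two facts needed---that this restriction is surjective onto $\Ker f_A$ and that it is injective---have already been assembled in the discussion preceding the statement, so the proof amounts to combining them.

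First I would regard $\pi|_{\Aut_0(A, C_X)}$ as a group homomorphism from $\Aut_0(A, C_X)$ into $\Pic(A, C_X)$; this is immediate since $\Aut_0(A, C_X)$ is a subgroup of $\Aut(A, C_X)$ and $\pi$ is a homomorphism on the latter. For its image, I would invoke Lemma \ref{lem:ker1}: for $\beta \in \Aut_0(A, C_X)$ one has $\pi(\beta) = [M_\beta, N_\beta] = [A, N_\beta]$, and Lemma \ref{lem:ker1} identifies the set of all such classes with $\Ker f_A$. Hence $\pi(\Aut_0(A, C_X)) = \Ker f_A$, giving surjectivity onto $\Ker f_A$.

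Next I would check injectivity, that is, $\Ker \pi \cap \Aut_0(A, C_X) = \{1\}$. By \cite[Lemma 3.4]{Kodaka:Picard} this intersection equals $\Int(A, C_X) \cap \Aut_0(A, C_X)$, consisting of those $\Ad(u)$ with $u$ a unitary in $A$ for which $\Ad(u)$ fixes $A$ pointwise; the latter condition forces $u \in A' \cap A$. Since the hypothesis $A' \cap C_X = \BC 1$ gives $A' \cap A = \BC 1$, such a $u$ is scalar and $\Ad(u) = \id$, so the kernel of the restriction is trivial.

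Finally, the first isomorphism theorem yields $\Aut_0(A, C_X) \cong \Aut_0(A, C_X)/\{1\} \cong \Ker f_A$. I do not expect a genuine obstacle, as all the structural input is already available; the only point deserving care is to confirm that the map whose image was computed via Lemma \ref{lem:ker1} and the map whose kernel was computed via \cite[Lemma 3.4]{Kodaka:Picard} are one and the same restriction $\pi|_{\Aut_0(A, C_X)}$, so that the two computations legitimately combine into a single isomorphism rather than two unrelated statements.
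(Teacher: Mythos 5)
Your proof is correct and follows essentially the same route as the paper: the paper's own argument (given in the discussion preceding the lemma) likewise combines Lemma \ref{lem:ker1} to get $\pi(\Aut_0(A, C_X)) = \Ker f_A$ with \cite[Lemma 3.4]{Kodaka:Picard} and the hypothesis $A' \cap C_X = \BC 1$ to get $\Ker\pi \cap \Aut_0(A, C_X) = \{1\}$, then concludes by the first isomorphism theorem. Your only addition is the explicit remark that both computations concern the same restriction $\pi|_{\Aut_0(A, C_X)}$, which the paper leaves implicit.
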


Let ${}_A \Aut_A^{\natural}(X)$ be the group of all involutive $A-A$-equivalence bimodule automorphisms
of $X$. Let $E^A$ be the conditional expectation from $C_X$ onto $A$ defined by
$$
E^A (\begin{bmatrix} a & x \\
\widetilde{x^{\natural}} & a \end{bmatrix})
=\begin{bmatrix} a & 0 \\
0 & a \end{bmatrix}
$$
for any $a\in A$, $x\in X$. Then $E^A$ is of Watatani index-finite type by 
\cite [Lemma 3.4]{KT0:involution}.

\begin{lemma}\label{lem:composition} Withe the above notation, $E^A =E^A \circ\beta$ for any
$\beta\in\Aut_0 (A, C_X )$.
\end{lemma}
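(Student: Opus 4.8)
The plan is to recognize $E^A \circ \beta$ as a second conditional expectation of $C_X$ onto $A$ and then to invoke uniqueness. First I would check that $E^A\circ\beta$ really is a conditional expectation onto $A$: it is the composition of the completely positive contraction $E^A$ with the $*$-automorphism $\beta$, hence completely positive and contractive; since $\beta(a)=a$ for $a\in A$ we get $(E^A\circ\beta)(a)=E^A(a)=a$, so it restricts to $\id$ on $A$ and is idempotent with range $A$; and for $a,b\in A$, $c\in C_X$ the identity $(E^A\circ\beta)(acb)=E^A(a\,\beta(c)\,b)=a\,(E^A\circ\beta)(c)\,b$ uses only $\beta|_A=\id$ and the $A$-bimodularity of $E^A$. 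Thus $E^A\circ\beta$ is a conditional expectation from $C_X$ onto $A$.

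Next I would verify that $E^A\circ\beta$ is again of Watatani index-finite type, using that $E^A$ is so by \cite[Lemma 3.4]{KT0:involution}. If $\{(w_i,w_i^*)\}_i$ is a quasi-basis for $E^A$, then $\{(\beta^{-1}(w_i),\beta^{-1}(w_i)^*)\}_i$ is a quasi-basis for $E^A\circ\beta$: since $E^A(w_i^*\beta(c))\in A$ and $\beta^{-1}|_A=\id$, one computes $\sum_i\beta^{-1}(w_i)\,(E^A\circ\beta)(\beta^{-1}(w_i)^*c)=\beta^{-1}\bigl(\sum_i w_i\,E^A(w_i^*\beta(c))\bigr)=\beta^{-1}(\beta(c))=c$. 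Hence both $E^A$ and $E^A\circ\beta$ are conditional expectations of index-finite type onto $A$. Because $A'\cap C_X=\BC1$, the inclusion $A\subset C_X$ is irreducible, and an irreducible inclusion admits at most one conditional expectation of index-finite type; therefore $E^A\circ\beta=E^A$.

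I expect the step that genuinely has to be justified is this uniqueness, so I would also indicate how it can be seen by hand here, which simultaneously pins down where $A'\cap C_X=\BC1$ enters. Writing a general element of $C_X$ as $\begin{bmatrix} a & x \\ \widetilde{x^{\natural}} & a\end{bmatrix}$ and using $\beta|_A=\id$, one finds $E^A(\beta(c))=E^A(c)+D(x)$, where $D(x):=E^A\bigl(\beta\bigl(\begin{bmatrix} 0 & x \\ \widetilde{x^{\natural}} & 0\end{bmatrix}\bigr)\bigr)$, so the claim reduces to $D\equiv0$. Since $\beta$ fixes $A$ and $E^A$ is an $A$-bimodule map, $D$ is a bounded $A$–$A$-bimodule map of $X$ into $A$. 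As $A$ is unital, $X$ is finitely generated and projective, hence self-dual, as a right Hilbert $A$-module, so $D(x)=\la\eta,x\ra_A$ for some $\eta\in X$; the left $A$-linearity of $D$ then forces $b\cdot\eta=\eta\cdot b$ for all $b\in A$. Finally, such an $\eta$ gives an off-diagonal element $\begin{bmatrix} 0 & \eta \\ \widetilde{\eta^{\natural}} & 0\end{bmatrix}$ of $A'\cap C_X=\BC1$, whence $\eta=0$ and $D=0$. The real obstacle is thus the representation of bounded $A$-bimodule maps $X\to A$ by right inner products, together with the passage from $A'\cap C_X=\BC1$ to $\{\eta\in X:\ b\cdot\eta=\eta\cdot b\ \forall b\in A\}=\{0\}$; once these are in place the lemma follows.
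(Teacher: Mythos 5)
Your proposal is correct, and its core is exactly the paper's argument: since $\beta|_A=\id$, the map $E^A\circ\beta$ is again a conditional expectation from $C_X$ onto $A$, and the irreducibility assumption $A'\cap C_X=\BC 1$ together with Watatani's uniqueness result \cite[Proposition 1.4.1]{Watatani:index} forces $E^A\circ\beta=E^A$; the paper's proof consists of precisely these two sentences. Your two supplements go beyond what the paper writes, but both are sound. The quasi-basis transport $\{(\beta^{-1}(w_i),\beta^{-1}(w_i)^*)\}_i$ showing that $E^A\circ\beta$ is also of index-finite type is not needed under the paper's reading of Watatani's proposition (it suffices that one of the two expectations, namely $E^A$, be of index-finite type, which is \cite[Lemma 3.4]{KT0:involution}), but it is harmless and covers you if one only invokes uniqueness within the class of index-finite-type expectations. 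The by-hand argument in your last paragraph is a genuine bonus: it is a correct, self-contained elementary proof that bypasses Watatani's proposition altogether, by reducing the claim to the vanishing of the $A$-bimodule map $D\colon X\to A$, representing $D$ as $\la\eta,\cdot\ra_A$ via self-duality of the finitely generated projective right Hilbert $A$-module $X$, and then using $A'\cap C_X=\BC1$ to kill the resulting central off-diagonal element $\begin{bmatrix} 0 & \eta \\ \widetilde{\eta^{\natural}} & 0 \end{bmatrix}$. That version makes explicit exactly where irreducibility enters, which the citation in the paper's proof leaves hidden.
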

\begin{proof} Let $\beta\in \Aut_0 (A, C_X )$. Then $E^A \circ\beta$ is also a conditional expectation
from $C_X$ onto $A$. Since $A' \cap C_X =\BC 1$, by Watatani \cite [Proposition 1.4.1]{Watatani:index},
$E^A =E^A \circ\beta$.
\end{proof}

\begin{lemma}\label{lem:unique} With the above notation, for any $\beta\in Aut_0 (A, C_X )$,
there is the unique $\theta\in \Aut_0^{\natural}(X)$ such that
$$
\beta(\begin{bmatrix} a & x \\
\widetilde{x^{\natural}} & a \end{bmatrix})
=\begin{bmatrix} a & \theta(x) \\
\widetilde{\theta(x^{\natural})} & a \end{bmatrix}
$$
for any $a\in A$, $x\in X$.
\end{lemma}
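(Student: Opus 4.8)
The plan is to realize every element of $C_X$ uniquely as a sum of a ``diagonal'' piece coming from $A$ and an ``off-diagonal'' piece coming from $X$, and then to track what $\beta$ does to the off-diagonal part. Writing $T(x)=\begin{bmatrix} 0 & x \\ \widetilde{x^{\natural}} & 0\end{bmatrix}$ for $x\in X$, every element of $C_X$ is uniquely of the form $\begin{bmatrix} a & 0 \\ 0 & a\end{bmatrix}+T(x)$. First I would record the elementary algebraic identities in $C_X$: the map $T$ is $\BC$-linear (using that both $x\mapsto x^{\natural}$ and $x\mapsto\widetilde{x}$ are conjugate linear), the bimodule relations $\begin{bmatrix} a & 0 \\ 0 & a\end{bmatrix}T(x)=T(a\cdot x)$ and $T(x)\begin{bmatrix} a & 0 \\ 0 & a\end{bmatrix}=T(x\cdot a)$ (each of which reduces to property (2) of the involution), and the adjoint relation $T(x)^*=T(x^{\natural})$, which follows from the way the involution of the linking algebra $L_X$ exchanges the two corners together with $(x^{\natural})^{\natural}=x$.

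The key step is to define $\theta$. Given $\beta\in\Aut_0(A,C_X)$, write $\beta(T(x))=\begin{bmatrix} a_x & 0 \\ 0 & a_x\end{bmatrix}+T(y_x)$. Applying Lemma \ref{lem:composition} and using $E^A(T(x))=0$ gives $\begin{bmatrix} a_x & 0 \\ 0 & a_x\end{bmatrix}=E^A(\beta(T(x)))=E^A(T(x))=0$, so $a_x=0$ and $\beta(T(x))\in T(X)$. Since $T$ is injective, there is a unique $\theta(x)\in X$ with $\beta(T(x))=T(\theta(x))$, and $\BC$-linearity of $\beta$ and $T$ makes $\theta$ a $\BC$-linear map. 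This is the crux of the argument, and it is exactly the place where the hypothesis $A'\cap C_X=\BC1$ is used, since that hypothesis is what forces the conditional expectation onto $A$ to be unique in Lemma \ref{lem:composition}.

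It then remains to check that $\theta$ is an involutive $A-A$-equivalence bimodule automorphism. The bimodule property $\theta(a\cdot x\cdot b)=a\cdot\theta(x)\cdot b$ is immediate from $\beta|_A=\id$ and the bimodule relations above. For the inner products I would observe that both are recovered as products inside $C_X$: using property (3) of the involution one checks $T(x)T(y)^*=\begin{bmatrix}{}_A\la x,y\ra & 0 \\ 0 &{}_A\la x,y\ra\end{bmatrix}$ and $T(y)^*T(x)=\begin{bmatrix}\la y,x\ra_A & 0 \\ 0 &\la y,x\ra_A\end{bmatrix}$; applying $\beta$ (a $*$-homomorphism fixing $A$) and using $\beta(T(x))=T(\theta(x))$ yields ${}_A\la\theta(x),\theta(y)\ra={}_A\la x,y\ra$ and $\la\theta(y),\theta(x)\ra_A=\la y,x\ra_A$. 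Applying the whole construction to $\beta^{-1}\in\Aut_0(A,C_X)$ produces a two-sided inverse for $\theta$, so $\theta$ is bijective and hence an $A-A$-equivalence bimodule automorphism. The involutive property $\theta(x^{\natural})=\theta(x)^{\natural}$ falls out by applying $\beta$ to $T(x)^*=T(x^{\natural})$: indeed $T(\theta(x^{\natural}))=\beta(T(x)^*)=\beta(T(x))^*=T(\theta(x))^*=T(\theta(x)^{\natural})$, and injectivity of $T$ finishes it. Thus $\theta\in{}_A\Aut_A^{\natural}(X)$.

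Finally, combining $\beta(\begin{bmatrix} a & 0 \\ 0 & a\end{bmatrix})=\begin{bmatrix} a & 0 \\ 0 & a\end{bmatrix}$ with $\beta(T(x))=T(\theta(x))$ and rewriting the lower-left corner of $T(\theta(x))$ as $\widetilde{\theta(x)^{\natural}}=\widetilde{\theta(x^{\natural})}$ (using the involutive property just proved) gives precisely the displayed formula. Uniqueness of $\theta$ is immediate: any $\theta'$ satisfying the formula must have $T(\theta'(x))=T(\theta(x))$ for all $x$, whence $\theta'=\theta$ by injectivity of $T$. I expect the only genuinely delicate point to be the verification that $\beta$ preserves the off-diagonal subspace $T(X)$; everything after that is a matter of transporting the $C^*$-algebraic structure of $C_X$ through $\beta$.
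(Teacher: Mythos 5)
Your proposal is correct and follows essentially the same route as the paper: both use Lemma \ref{lem:composition} (i.e.\ $E^A\circ\beta=E^A$, which is where $A'\cap C_X=\BC 1$ enters via Watatani's uniqueness of conditional expectations) to kill the diagonal part of $\beta(T(x))$, define $\theta$ from the off-diagonal part, and then transport the module actions, the inner products and the involution through the $*$-homomorphism $\beta$. If anything, your argument is slightly more complete, since you verify bijectivity of $\theta$ by running the construction on $\beta^{-1}$, a point the paper's proof leaves implicit before asserting $\theta\in{}_A\Aut_A^{\natural}(X)$.
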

\begin{proof} For any $x\in X$, let
$$
\beta(\begin{bmatrix} 0 & x \\
\widetilde{x^{\natural}} & 0 \end{bmatrix})
=\begin{bmatrix} b & y \\
\widetilde{y^{\natural}} & b \end{bmatrix} ,
$$
where $b\in A$, $y\in X$. Then by Lemma \ref{lem:composition},
$$
\begin{bmatrix} b & 0 \\
0 & b \end{bmatrix}
=(E^A \circ\beta)(\begin{bmatrix} 0 & x \\
\widetilde{x^{\natural}} & 0 \end{bmatrix})
=E^A (\begin{bmatrix} 0 & x \\
\widetilde{x^{\natural}} & 0 \end{bmatrix})
=\begin{bmatrix} 0 & 0 \\
0 & 0 \end{bmatrix} .
$$
Hence $b=0$. Thus
$$
\beta(\begin{bmatrix} 0 & x \\
\widetilde{x^{\natural}} & 0 \end{bmatrix})
=\begin{bmatrix} 0 & y \\
\widetilde{y^{\natural}} & 0 \end{bmatrix} .
$$
We define a map $\theta$ on $X$ by
$$
\theta(x)=y ,
$$
where $y$ is the element in $X$ defined as above. Then clearly $\theta$ is linear and since
$$
\beta(\begin{bmatrix} 0 & x^{\natural} \\
\widetilde{x} & 0 \end{bmatrix})
=\beta(\begin{bmatrix} 0 & x \\
\widetilde{x^{\natural}} & 0 \end{bmatrix})^*
=\begin{bmatrix} 0 & y^{\natural} \\
\widetilde{y} & 0 \end{bmatrix} ,
$$
we obtain that
$$
\theta(x^{\natural})=y^{\natural}=\theta(x)^{\natural} .
$$
Hence $\theta$ preserves the involution $\natural$.
Also, for any $a\in A$, $x\in X$,
\begin{align*}
\begin{bmatrix} 0 & \theta(a\cdot x) \\
\theta(\widetilde{(a\cdot x)^{\natural}}) & 0 \end{bmatrix}
& =\beta(\begin{bmatrix} 0 & a\cdot x \\
a\cdot \widetilde{x^{\natural}} & 0 \end{bmatrix})
=\beta(\begin{bmatrix} a & 0 \\
0 & a \end{bmatrix}
\begin{bmatrix} 0 & x \\
\widetilde{x^{\natural}} & 0 \end{bmatrix}) \\
& =\begin{bmatrix} a & 0 \\
0 & a \end{bmatrix}
\begin{bmatrix} 0 & \theta(x) \\
\theta(\widetilde{x^{\natural}}) & 0 \end{bmatrix}
=\begin{bmatrix} 0 & a\cdot \theta(x) \\
a\cdot \theta(\widetilde{x^{\natural}}) & 0 \end{bmatrix} .
\end{align*}
Hence $\theta(a\cdot x)=a\cdot\theta(x)$ for any $a\in A$, $x\in X$.
Similarly $\theta(x\cdot a)=\theta(x)\cdot a$ for any $a\in A$, $x\in X$.
Furthermore, for any $x, y\in X$,
\begin{align*}
\begin{bmatrix} {}_A \la \theta(x) \, , \, \theta(y) \ra & 0 \\
0 & {}_A \la \theta(x) \, , \, \theta(y) \ra \end{bmatrix}
& =\begin{bmatrix} 0 & \theta(x) \\
\widetilde{\theta(x)^{\natural}} & 0 \end{bmatrix}
\begin{bmatrix} 0 & \theta(y)^{\natural} \\
\widetilde{\theta(y)} & 0 \end{bmatrix} \\
& =\beta(\begin{bmatrix} 0 & x \\
\widetilde{x^{\natural}} & 0 \end{bmatrix}
\begin{bmatrix} 0 & y^{\natural} \\
\widetilde{y} & 0 \end{bmatrix}) \\
& =\beta(\begin{bmatrix} {}_A \la x \, , \, y \ra & 0 \\
0 & {}_A \la x \, , \, y \ra \end{bmatrix}) \\
& =\begin{bmatrix} {}_A \la x \, , \, y \ra & 0 \\
0 & {}_A \la x \, , \, y \ra \end{bmatrix} .
\end{align*}
Hence ${}_A \la \theta(x) \, ,\, \theta(y) \ra={}_A \la x \, , \, y \ra$ for any $x, y\in X$.
Similarly for any $x, y\in X$,
$$
\begin{bmatrix} \la \theta(x) \, , \, \theta(y) \ra_A & 0 \\
0 & \la \theta(x) \, , \, \theta(y) \ra_A \end{bmatrix}
=\begin{bmatrix}  \la x \, , \, y \ra_A & 0 \\
0 &  \la x \, , \, y \ra_A \end{bmatrix} .
$$
Hence $\la \theta(x) \, ,\, \theta(y) \ra_A =\la x \, , \, y \ra_A$ for any $x, y\in X$.
Thus $\theta\in\!{}_A \Aut_A^{\natural}(X)$. Next, let $\theta\in {}_A \Aut_A^{\natural}(X)$.
Then let $\beta$ be a map on $C_X$ defined by
$$
\beta(\begin{bmatrix} a & x \\ \widetilde{x^{\natural}} & a \end{bmatrix})
=\begin{bmatrix} a & \theta(x) \\
\widetilde{\theta(x)^{\natural}} & a \end{bmatrix}
$$
for any $a\in A$, $x\in X$. Then by easy computations, $\beta\in \Aut_0 (A, C_X )$.
Therefore, we obtain the conclusion.
\end{proof}

\begin{cor}\label{cor:morphism} With the above notation, $\Aut_0 (A, C_X )\cong {}_A \Aut_A^{\natural}(X)$.
\end{cor}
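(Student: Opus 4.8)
The plan is to exhibit the isomorphism $\Aut_0(A,C_X)\cong {}_A\Aut_A^\natural(X)$ via the map $\beta\mapsto\theta$ established in Lemma \ref{lem:unique}, and to verify that this correspondence is a group homomorphism that is both injective and surjective. Lemma \ref{lem:unique} already produces, for each $\beta\in\Aut_0(A,C_X)$, a unique $\theta\in{}_A\Aut_A^\natural(X)$ satisfying the stated matrix formula, and conversely shows that every $\theta\in{}_A\Aut_A^\natural(X)$ arises from some $\beta\in\Aut_0(A,C_X)$ via the same formula. So the content of the corollary is that this bijective correspondence respects the group structures.

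Concretely, I would define $\Psi\colon\Aut_0(A,C_X)\to{}_A\Aut_A^\natural(X)$ by $\Psi(\beta)=\theta$, where $\theta$ is the unique element produced by Lemma \ref{lem:unique}. First I would check that $\Psi$ is a homomorphism. Given $\beta_1,\beta_2\in\Aut_0(A,C_X)$ with associated $\theta_1,\theta_2$, I would compute $(\beta_1\circ\beta_2)$ applied to the matrix $\begin{bmatrix} a & x \\ \widetilde{x^\natural} & a\end{bmatrix}$. Applying $\beta_2$ first replaces $x$ by $\theta_2(x)$, and then applying $\beta_1$ replaces $\theta_2(x)$ by $\theta_1(\theta_2(x))$; by the uniqueness clause of Lemma \ref{lem:unique}, the element of ${}_A\Aut_A^\natural(X)$ associated with $\beta_1\circ\beta_2$ must therefore be $\theta_1\circ\theta_2$. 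Hence $\Psi(\beta_1\circ\beta_2)=\Psi(\beta_1)\circ\Psi(\beta_2)$.

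Injectivity is immediate: if $\Psi(\beta)=\id_X$, then the matrix formula forces $\beta$ to fix every element of $C_X$, so $\beta=\id_{C_X}$. Surjectivity is exactly the second half of Lemma \ref{lem:unique}, which constructs, for a given $\theta\in{}_A\Aut_A^\natural(X)$, a $\beta\in\Aut_0(A,C_X)$ realizing it. Thus $\Psi$ is a bijective homomorphism, and therefore an isomorphism of groups.

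I do not anticipate a genuine obstacle here, since Lemma \ref{lem:unique} has done the substantive work of setting up the well-defined bijective correspondence and checking that $\theta$ is indeed an involutive $A$-$A$-equivalence bimodule automorphism. The only point requiring care is the homomorphism property, and the delicacy there lies solely in invoking the \emph{uniqueness} in Lemma \ref{lem:unique} to conclude that the $\theta$ attached to $\beta_1\circ\beta_2$ coincides with $\theta_1\circ\theta_2$, rather than recomputing it from scratch; this is the step I would state most carefully.
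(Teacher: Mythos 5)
Your proposal is correct and follows the same route as the paper, which simply declares the corollary ``immediate by Lemma \ref{lem:unique}''; you have merely written out the routine verification (homomorphism property via the uniqueness clause, injectivity, and surjectivity) that the paper leaves implicit. No gaps.
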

\begin{proof} This is immediate by Lemma \ref{lem:unique}.
\end{proof}

Let ${}_A \Aut_A (X)$ be the group of all $A-A$-equivalence bimodule automorphisms.
Since $X$ is an $A-A$-equivalence bimodule, ${}_A \Aut_A (X)$ is isomorphic to $U(A' \cap A)$, 
the group of all unitary elements in $A' \cap A$. Since $A' \cap C_X =\BC1$, $U(A' \cap A)=\BT1$,
where $\BT$ is the 1-dimensional torus. Hence ${}_A \Aut_A^{\natural}(X)$ is isomorphic to a subgroup of
$\BT$. But since $\lambda1$ preserves the operation $\natural$ for any $\lambda\in\BT$,
${}_A \Aut_A^{\natural}(X)\cong \BT1$. By the above discussions, we can obtain the following
proposition.

\begin{prop}\label{prop:ker3} With the above notation, $\Ker f_A \cong \BT1$.
\end{prop}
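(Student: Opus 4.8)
The plan is to splice together the group isomorphisms already established and thereby reduce the entire computation to an intrinsic invariant of the bimodule $X$. By Lemma \ref{lem:ker2} we have $\Ker f_A \cong \Aut_0(A, C_X )$, and by Corollary \ref{cor:morphism} we have $\Aut_0(A, C_X )\cong {}_A \Aut_A^{\natural}(X)$; composing these two isomorphisms immediately yields $\Ker f_A \cong {}_A \Aut_A^{\natural}(X)$. So the first, purely formal, step is simply to record this chain, after which nothing about the inclusion, the conditional expectation $E^A$, or the linking algebra remains: everything has been transferred to the single group ${}_A \Aut_A^{\natural}(X)$ of involutive $A$-$A$-equivalence bimodule automorphisms of $X$.

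Second, I would control ${}_A \Aut_A^{\natural}(X)$ by caging it inside the ambient automorphism group ${}_A \Aut_A(X)$ of all $A$-$A$-equivalence bimodule automorphisms, which is easier to pin down. Since $X$ is an $A$-$A$-equivalence bimodule, each such automorphism is implemented by left multiplication by a central unitary, so ${}_A \Aut_A(X)\cong U(A' \cap A)$. The hypothesis $A' \cap C_X =\BC 1$ then does the work: because $A\subseteq C_X$ we get $A' \cap A \subseteq A' \cap C_X =\BC 1$, whence $A' \cap A =\BC 1$ and $U(A' \cap A)=\BT 1$. Therefore ${}_A \Aut_A(X)\cong \BT$, realized concretely by the scalar maps $x\mapsto \lambda x$ for $\lambda\in\BT$, and ${}_A \Aut_A^{\natural}(X)$ sits inside this circle as a subgroup.

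The decisive step, and the one I expect to be the real obstacle, is to decide exactly which of the scalar automorphisms $x\mapsto \lambda x$ respect the involution, that is, satisfy $\theta(x^{\natural})=\theta(x)^{\natural}$. Here the conjugate-linearity of $\natural$ is what matters: applying property (2) of the involution to $\lambda 1\cdot x$ shows precisely how the scalar $\lambda$ passes through $\natural$, and the comparison of $(\lambda x)^{\natural}$ with $\lambda(x^{\natural})$ determines the admissible scalars and hence the isomorphism type of ${}_A \Aut_A^{\natural}(X)$ as a subgroup of $\BT 1$. This compatibility calculation is delicate exactly because a single stray conjugation would change the resulting subgroup, so I would carry it out explicitly and only then feed the outcome back through the chain $\Ker f_A \cong \Aut_0(A, C_X )\cong {}_A \Aut_A^{\natural}(X)$ to read off the final description of $\Ker f_A$.
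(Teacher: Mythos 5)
Your first two steps are exactly the paper's own argument: the chain $\Ker f_A \cong \Aut_0 (A, C_X )\cong {}_A \Aut_A^{\natural}(X)$ from Lemma \ref{lem:ker2} and Corollary \ref{cor:morphism}, then ${}_A \Aut_A (X)\cong U(A' \cap A)=\BT 1$ because $A' \cap A\subseteq A' \cap C_X =\BC 1$, so that ${}_A \Aut_A^{\natural}(X)$ sits inside the circle of scalar maps $\theta_{\lambda}\colon x\mapsto \lambda x$. But your proposal stops precisely at the decisive point: you say you \emph{would} determine which $\theta_{\lambda}$ preserve $\natural$, and you never do. As written, nothing beyond Lemma \ref{lem:ker2} and Corollary \ref{cor:morphism} has been proved; the isomorphism type of $\Ker f_A$, which is the entire content of the proposition, is exactly the part left undone.

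The deeper issue is what happens when that computation is actually carried out: it does not give $\BT$. Since $\natural$ is conjugate linear (property (2) with $a=\lambda 1$, $b=1$ gives $(\lambda x)^{\natural}=x^{\natural}\cdot(\lambda 1)^* =\bar{\lambda}x^{\natural}$), while $\theta_{\lambda}(x^{\natural})=\lambda x^{\natural}$, the map $\theta_{\lambda}$ preserves $\natural$ if and only if $\lambda=\bar{\lambda}$, i.e. $\lambda=\pm 1$. So your route, completed honestly, yields ${}_A \Aut_A^{\natural}(X)=\{\pm\id_X \}\cong \BZ_2$ and hence $\Ker f_A \cong \BZ_2$, contradicting the proposition. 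This is not an artifact of your approach: writing elements of $C_X$ as $a\oplus x$, the product has $A$-component $ab+{}_A \la x, y^{\natural} \ra$, so $a\oplus x\mapsto a\oplus\lambda x$ is multiplicative only when ${}_A \la \lambda x, (\lambda y)^{\natural} \ra=\lambda^2 \, {}_A \la x, y^{\natural} \ra$ equals ${}_A \la x, y^{\natural} \ra$, forcing $\lambda^2 =1$ again; and in the model case of an outer $\BZ_2$-action $\alpha$ on a simple unital $C^*$-algebra $B$ (take $X=B$ with $x\cdot b=x\alpha(b)$ and $x^{\natural}=\alpha(x^* )$, so that $C_X \cong B\rtimes_{\alpha}\BZ_2$ and $A' \cap C_X =\BC 1$), any automorphism fixing $B$ pointwise must send the implementing unitary $u$ to $\mu u$ with $\mu u$ a self-adjoint unitary of square $1$, i.e. $\mu=\pm 1$, so $\Aut_0 (B, B\rtimes_{\alpha}\BZ_2 )\cong\BZ_2$. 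The paper bridges this step with the bare assertion, in the discussion its proof invokes, that ``$\lambda 1$ preserves the operation $\natural$ for any $\lambda\in\BT$''; that assertion is exactly the stray-conjugation error you were worried about, and it is the paper, not you, that drops the conjugation. In short: finish your third step and you will have refuted Proposition \ref{prop:ker3} as stated (and correspondingly Theorem \ref{thm:homo2}, where the semi-direct product should be by $\BZ_2$ rather than $\BT$), not proved it.
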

\begin{proof} This is immediate by Lemma \ref{lem:ker2}, Corollary \ref{cor:morphism}
and the above discussions.
\end{proof}

\section{A result on strongly Morita equivalent unital inclusions of unital $C^*$-algebras}
\label{sec:construction}
In this section, we shall prove the following result: Let $H$ be a finite dimensional $C^*$-Hopf algebra
and $H^0$ its dual $C^*$-Hopf algebra. Let $(\rho, u)$ and $(\sigma, v)$ be twisted
coactions of $H^0$ on unital $C^*$-algebras $A$ and $B$,
respectively. Let $A\subset A\rtimes_{\rho, u}H$ and $B\subset B\rtimes_{\sigma, v}H$ be unital
inclusions of unital $C^*$-algebras. We suppose that they are strongly Morita equivalent with respect to
an $A\rtimes_{\rho, u}H-B\rtimes_{\sigma,v}H$-equivalence bimodule $Y$ and its closed subspace $X$.
And we suppose that $A' \cap (A\rtimes_{\rho, u}H)=\BC1$. Then there are a twisted coaction $(\gamma, w)$
of $H^0$ on $B$ and a twisted coaction $\lambda$ of $H^0$ on $X$ satisfying the following:
\newline
(1) $(\rho, u)$ and $(\gamma, w)$ are strongly Morita equivalent with respect to
$\lambda$,
\newline
(2) $B\rtimes_{\sigma, v}H=B\rtimes_{\gamma, w}H$,
\newline
(3) $Y\cong X\rtimes_{\lambda}H$ as $A\rtimes_{\rho, u}H-B\rtimes_{\sigma, v}H$-
equivalence bimodules.
\par
In the next section, we shall use this
result in the case of $\BZ_2$-actions, where $\BZ_2 =\BZ/2\BZ$.
We shall use the results in \cite {KT5:Hopf} in order to prove the above result. First we recall
\cite {KT5:Hopf}.
\par
Let $H$ be a finite dimensional $C^*$-Hopf algebra. We denote
its comultiplication, counit and antipode by $\Delta$, $\epsilon$, and $S$, respectively.
We shall use Sweedler's notation $\Delta(h)=h_{(1)}\otimes h_{(2)}$ for any $h\in H$ which
surppresses a possible summation when we write comultiplications. We denote by $N$ the
dimension of $H$. Let $H^0$ be the dual $C^*$-Hopf algebra of $H$. We denote its comultiplication,
counit and antipode by $\Delta^0$, $\epsilon^0$ and $S^0$, respectively. There is the distinguished
projection $e$ in $H$. We note that $e$ is the Haar trace on $H^0$. Also, there is the distinguished
projection $\tau$ in $H^0$ which is the Haar trace on $H$. Since $H^0$ is finite dimensional,
$H^0 \cong \oplus_{k=1}^K M_{d_k}(\BC)$ as $C^*$-algebras, where $M_n (\BC)$ is the $n\times n$-
matrix algebra over $\BC$.
Let
$$
\{w_{ij}^k \, | \, k=1,2,\dots, K, \, i,j=1,2,\dots, d_k\}
$$
be a basis of $H$ satisfying Szyma\'nski and
Peligrad's \cite [Theorem 2.2,2]{SP:saturated}, which is called a system of
\sl
comatrix units
\rm
of $H$, that is, the dual basis of a system of matrix units of $H^0$. 
\par
Let $A$ be a unital $C^*$-algebra and $(\rho, u)$ a twisted coaction of $H^0$ on $A$, that is,
$\rho$ is a weak coaction of $H^0$ on $A$ and $u$ is a unitary element in $A\otimes H^0 \otimes H^0$
satisfying that
\newline
(1) $(\rho\otimes\id)\circ\rho=\Ad(u)\circ(\id\otimes\Delta^0 )\circ\rho$,
\newline
(2) $(u\otimes 1^0 )(\id\otimes\Delta^0 \otimes\id)(u)=(\rho\otimes \id\otimes\id)(u)(\id\otimes\id\otimes
\Delta^0 )(u)$,
\newline
(3) $(\id\otimes h\otimes\epsilon^0 )(u)=(\id\otimes\epsilon^0 \otimes h)(u)=\epsilon^0 (h)1$ for any
$h\in H$.
\newline
For a twisted coaction $(\rho, u)$ of $H^0$ on $A$, we can consider the twisted action of $H$
on $A$ and its unitary element $\widehat{u}$ defined by
$$
h\cdot_{\rho, u}x =(\id\otimes h)(\rho(x)) , \quad
\widehat{u}(h, l)=(\id\otimes h\otimes l)(u)
$$
for any $x\in A$, $h, l\in H$. We call it the twisted action of $H$ on $A$ induced by
$(\rho, u)$. Let $A\rtimes_{\rho, u}H$ be the twisted crossed product of $A$ by the
twisted action of $H$ induced by $(\rho, u)$. Let $x\rtimes_{\rho, u}h$ be the
element in $A\rtimes_{\rho, u}H$ induced by $x\in A$ and $h\in H$. Let $\widehat{\rho}$ be
the dual coaction of $H$ on $A\rtimes_{\rho, u}H$ defined by
$$
\widehat{\rho}(x\rtimes_{\rho, u}h)=(x\rtimes_{\rho, u}h_{(1)})\otimes h_{(2)}
$$
for any $x\in A$, $h\in H$. Let $E_1^{\rho, u}$ be the canonical conditional expectation
from $A\rtimes_{\rho, u}H$ onto $A$ defined by
$$
E_1^{\rho, u}(x\rtimes_{\rho, u}h)=\tau(h)x
$$
for any $x\in A$, $h\in H$. Let $\Lambda$ be the set of all triplets $(i, j, k)$, where
$i, j=1,2,\dots,d_k$ and $k=1,2,\dots, K$ with $\sum_{k=1}^K d_k^2 =N$. Let
$W_I^{\rho}=\sqrt{d_k}\rtimes_{\rho, u}w_{ij}^k$ fo any $I=(i, j, k)\in\Lambda$.
By \cite [Proposition 3.18]{KT1:inclusion}, $\{(W_I^{\rho*}, W_I^{\rho})\}_{I\in\Lambda}$
is a quasi-basis for $E_1^{\rho, u}$.
\par
Let $A$ and $B$ be unital $C^*$-algebras and let $(\rho, u)$ and $(\sigma, v)$ be twisted
coactions of $H^0$ on $A$ and $B$, respectively. Let $A\rtimes_{\rho, u}H$ and $B\rtimes_{\sigma,v}H$
be the twisted crossed products of $A$ and $B$ by $(\rho, u)$ and $(\sigma, v)$, respectively.
We denote them by $C$ and $D$, respectively. Then we obtain unital inclusions of unital
$C^*$-algebras, $A\subset C$ and $B\subset D$. We suppose that $A\subset C$ and $B\subset D$
are strongly Morita equivalent with respect to a $C-D$-equivalence bimodule $Y$ and its
closed subspace $X$. We also suppose that $A' \cap C=\BC 1$. Then $B' \cap D=\BC 1$
by \cite [Lemma 10.3]{KT4:morita}. And by \cite [Theorem 2.9]{KT4:morita}, there are a conditional
expectation $F^B$ from $D$ onto $B$ and a conditional expectation $E^X$ from $Y$ onto
$X$ with respect to $E_1^{\rho, u}$ and $F^B$ satisfying Conditions (1)--(6) in
\cite [Definition 2.4]{KT4:morita}. Since $B' \cap D=\BC 1$, by Watatani
\cite [Proposition 1.4.1]{Watatani:index}, $F^B =E_1^{\sigma, v}$, the canonical conditional
expectation from $D$ onto $B$. Furthermre, by \cite [Section 6]{KT4:morita},
we can see that the unital inclusions of unital $C^*$-algebras, $C\subset C_1$ and $D\subset D_1$
are strongly Morita equivalent with respect to the $C_1 -D_1$-equivalence bimodule $Y_1$ and
its closed subspace $Y$, where $C_1=C\rtimes_{\widehat{\rho}}H^0$
and $D_1 =D\rtimes_{\widehat{\sigma}}H^0$ and $\widehat{\rho}$ and $\widehat{\sigma}$ are
the dual coactions of $(\rho, u)$ and $(\sigma, v)$, respectively. And $Y_1$ is defined as follows:
We regard $C$ and $D$ as a $C_1 -A$-equivalence bimodule and a $D_1 -B$-equivalence
bimodule in the usual way as in \cite [Section 4]{KT4:morita}, respectively.
Let $Y_1 =C\otimes_A X\otimes _B \widetilde{D}$. Let $E^Y$ be the conditional expectation
from $Y_1$ onto $Y$ with respect to $E_2^{\rho, u}$ and $E_2^{\sigma, v}$ defined by
$$
E^Y (c\otimes x \otimes\widetilde{d})=\frac{1}{N}c\cdot x\cdot d^*
$$
for any $c\in C$, $d\in D$, $x\in X$, where $E_2^{\rho, u}$ and $E_2^{\sigma, v}$ are the
canonical conditional expectations from $C_1$ and $D_1$ onto $C$ and $D$, respectively.
We regard $Y$ as a closed subspace of $Y_1$ by the injective linear map $\phi$ from $Y$ into
$Y_1$ defined by
$$
\phi(y)=\sum_{I, J\in \Lambda}W_I^{\rho *}\otimes E^X (W_I^{\rho}\cdot y\cdot W_J^{\sigma *})
\otimes \widetilde{W_J^{\sigma*}}
$$
for any $y\in Y$. By \cite [Sections 3 and 4]{KT5:Hopf}, there are a coaction $\beta$ of $H$ on $D$
and a coaction $\mu$ of $H$ on $Y$ such that $(C, D, Y, \widehat{\rho}, \beta, H)$ is a covariant
system, that is, $\mu$ is a coaction of $H$ on $Y$ with respect to $(C, D, \widehat{\rho}, \beta)$.
We define the action of $H^0$ on $Y$ induced by $\mu$ as follows: For any $\psi\in H^0$, $y\in Y$,
$$
\psi\cdot_{\mu}y=NE^Y ((1\rtimes_{\rho, u}1\rtimes_{\widehat{\rho}}\psi)\cdot \phi(y)\cdot
(1\rtimes_{\sigma, v}1\rtimes_{\widehat{\sigma}}\tau))
$$
By \cite [Remark 3.1]{KT5:Hopf}, for any $\psi\in H^0$, $y\in Y$,
$$
\psi\cdot_{\mu}y =\sum_{I\in\Lambda}[\psi\cdot_{\widehat{\rho}}W_I^{\rho*}]\cdot E^X (W_I^{\rho}\cdot y) .
$$
Also, we define the action of $H^0$ on $D$ induced by $\beta$ as follows:
For any $\psi\in H^0$, $y, z\in Y$,
$$
\psi\cdot_{\beta}\la y, z \ra_D =\la S^0 (\psi_{(1)}^*)\cdot_{\mu} y \, , \, \psi_{(2)}\cdot_{\mu}z \ra_D ,
$$
where we regard $D$ as the linear span of the set
$\{ \la y, z \ra_D \, | \, y, z\in Y \} $.

\begin{lemma}\label{lem:action} For any $y\in Y$, $\tau\cdot_{\mu}y=E^X (y)$.
\end{lemma}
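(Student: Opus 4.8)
The plan is to begin from the simplified expression for the induced action recorded immediately before the statement, namely, by putting $\psi=\tau$ in the formula of \cite[Remark 3.1]{KT5:Hopf},
$$
\tau\cdot_{\mu}y=\sum_{I\in\Lambda}[\tau\cdot_{\widehat{\rho}}W_I^{\rho*}]\cdot E^X (W_I^{\rho}\cdot y) .
$$
The whole argument then rests on identifying the map $c\mapsto\tau\cdot_{\widehat{\rho}}c$ on $C=A\rtimes_{\rho, u}H$ with the canonical conditional expectation $E_1^{\rho, u}$.

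First I would compute $\tau\cdot_{\widehat{\rho}}c$ directly. Since $\widehat{\rho}$ is a coaction of $H$ on $C$, the induced action of $H^0$ is $\psi\cdot_{\widehat{\rho}}c=(\id\otimes\psi)(\widehat{\rho}(c))$, so on a generator one gets $\tau\cdot_{\widehat{\rho}}(x\rtimes_{\rho, u}h)=\la\tau, h_{(2)}\ra\,(x\rtimes_{\rho, u}h_{(1)})=x\rtimes_{\rho, u}((\id\otimes\tau)\Delta(h))$. Because $\tau$ is the Haar trace on $H$, it is a two-sided invariant integral, hence $(\id\otimes\tau)\Delta(h)=\tau(h)1$, and therefore $\tau\cdot_{\widehat{\rho}}(x\rtimes_{\rho, u}h)=\tau(h)x=E_1^{\rho, u}(x\rtimes_{\rho, u}h)$. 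By linearity this gives $\tau\cdot_{\widehat{\rho}}=E_1^{\rho, u}$ on all of $C$, and in particular $\tau\cdot_{\widehat{\rho}}W_I^{\rho*}=E_1^{\rho, u}(W_I^{\rho*})\in A$ for every $I\in\Lambda$.

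Substituting this back, the sum becomes $\sum_{I\in\Lambda}E_1^{\rho, u}(W_I^{\rho*})\cdot E^X (W_I^{\rho}\cdot y)$. Now I would use that $E^X$ is a left $A$-module map (one of the defining conditions for a conditional expectation from $Y$ onto $X$ in \cite[Definition 2.4]{KT4:morita}) to pull the coefficient $E_1^{\rho, u}(W_I^{\rho*})\in A$ inside: each term equals $E^X (E_1^{\rho, u}(W_I^{\rho*})\cdot W_I^{\rho}\cdot y)$. Summing over the finite set $\Lambda$ and using linearity of $E^X$ yields $E^X\bigl(\bigl(\sum_{I\in\Lambda}E_1^{\rho, u}(W_I^{\rho*})W_I^{\rho}\bigr)\cdot y\bigr)$. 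Finally, since $\{(W_I^{\rho*}, W_I^{\rho})\}_{I\in\Lambda}$ is a quasi-basis for $E_1^{\rho, u}$, the reconstruction identity applied to $1\in C$ gives $\sum_{I\in\Lambda}E_1^{\rho, u}(W_I^{\rho*})W_I^{\rho}=1$, whence $\tau\cdot_{\mu}y=E^X (1\cdot y)=E^X (y)$.

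The only genuinely delicate point is the identification $\tau\cdot_{\widehat{\rho}}=E_1^{\rho, u}$, which hinges on the bi-invariance $(\id\otimes\tau)\Delta(h)=\tau(h)1$ of the Haar trace together with matching the conventions for the induced $H^0$-action against those for the canonical expectation; once this is in hand, the remaining steps are the $A$-linearity of $E^X$ and the quasi-basis reconstruction formula, both of which are routine.
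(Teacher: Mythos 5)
Your proof is correct, but it takes a genuinely different route from the paper's. Both arguments start from the same formula of \cite[Remark 3.1]{KT5:Hopf}, namely $\tau\cdot_{\mu}y=\sum_{I\in\Lambda}[\tau\cdot_{\widehat{\rho}}W_I^{\rho*}]\cdot E^X(W_I^{\rho}\cdot y)$, but from there the paper proceeds by brute force: it expands each $W_I^{\rho*}=(\sqrt{d_k}\rtimes_{\rho,u}w_{ij}^k)^*$ explicitly in terms of the comatrix units, the antipode and the cocycle $\widehat{u}$, and then contracts the resulting multiple sums using the identities $\tau\circ S^0=\tau$, $\tau^*=\tau$, $\epsilon\circ S=\epsilon$ and $e=\frac{1}{N}\sum_{j,k}d_k w_{jj}^k$, ending with $N\tau(e)E^X(y)=E^X(y)$. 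You instead isolate the conceptual fact that the map $c\mapsto\tau\cdot_{\widehat{\rho}}c$ on $C$ is exactly $E_1^{\rho,u}$, which follows from the two-sided invariance $(\id\otimes\tau)\Delta(h)=\tau(h)1$ of the Haar trace combined with the paper's own conventions $\widehat{\rho}(x\rtimes_{\rho,u}h)=(x\rtimes_{\rho,u}h_{(1)})\otimes h_{(2)}$ and $E_1^{\rho,u}(x\rtimes_{\rho,u}h)=\tau(h)x$; you then finish with two formal properties, the left $A$-linearity of $E^X$ (Condition (2) of \cite[Definition 2.4]{KT4:morita}) and the quasi-basis reconstruction identity $\sum_{I\in\Lambda}E_1^{\rho,u}(W_I^{\rho*})W_I^{\rho}=1$ obtained by evaluating at $1\in C$. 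Each of these steps checks out: the invariance property you invoke is the defining property of the Haar trace of a finite-dimensional $C^*$-Hopf algebra, the pairing convention you use matches the one the paper itself uses inside its computation, and your application of Watatani's reconstruction formula is the correct one-sided version. What your approach buys is brevity and robustness: the twisting cocycle $\widehat{u}$ and the comatrix-unit combinatorics never enter, so the argument is far less error-prone and makes transparent why the lemma is true (the Haar projection implements the canonical expectation, and $E^X$ intertwines expectations along the quasi-basis). What the paper's computation buys is a self-contained verification entirely inside its fixed coordinates, at the cost of a page of index manipulation.
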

\begin{proof} By routine computations, we obtain the lemma. Indeed, by
\cite [Remark 3.1]{KT5:Hopf}, for any $y\in Y$,
\begin{align*}
\tau\cdot_{\mu}y & =\sum_{I\in \Lambda}[\tau\cdot_{\widehat{\rho}}W_I^{\rho*}]
\cdot E^X (W_I^{\rho}\cdot y) \\
& =\sum_{i, j, k}[\tau\cdot_{\widehat{\rho}}(\sqrt{d_k}\rtimes_{\rho, u}w_{ij}^k )^* ]
\cdot E^X ((\sqrt{d_k}\rtimes_{\rho, u}w_{ij}^k)\cdot y) \\
& =\sum_{i, j, j_1, j_2 , j_3 , k}[\tau\cdot_{\widehat{\rho}}(\widehat{u}(S(w_{j_1 j_2 }^k )\, , \, w_{ij_1 }^k )^*
[w_{j_2 j_3 }^{k*}\cdot_{\widehat{\rho}} \sqrt{d_k}]\rtimes_{\rho, u}w_{j_3 j}^{k*})] \\
& \cdot E^X ((\sqrt{d_k}\rtimes_{\rho, u}w_{ij}^k ) \cdot y) \\
& =\sum_{i, j, j_1 , j_2 , k}\sqrt{d_k }[\tau\cdot_{\widehat{\rho}}
(\widehat{u}(S(w_{j_1 j_2 }^k ) \, , \, w_{ij_1}^k) ^* \rtimes_{\rho, u}w_{j_2 j}^{k*})]
\cdot E^X ((\sqrt{d_k} \rtimes_{\rho, u}w_{ij}^k )\cdot y) \\
& =\sum_{i, j, j_1 , j_2 , j_3 , k}\sqrt{d_k}(\widehat{u}(S(w_{j_1 j_2}^k ) \, , \, w_{ij_1}^k )^*
\rtimes_{\rho, u}w_{j_2 j_3}^{k*})\tau(w_{j_3 j}^{k*}) \\
& \cdot E^X ((\sqrt{d_k}\rtimes_{\rho, u}w_{ij}^k )\cdot y) \\
& =\sum_{i, j, j_1 , j_2 , k}\sqrt{d_k}(\widehat{u}(S(w_{j_1 j_2}^k )\, , \, w_{ij_1}^k )^* \rtimes_{\rho, u}
\tau(w_{j_2 j}^{k*}))\cdot E^X ((\sqrt{d_k}\rtimes_{\rho, u}w_{ij}^k )\cdot y) .
\end{align*}
Since $\tau \circ S^0 =\tau$, $\tau(w_{j_2 j}^{k*})=\tau(S(w_{jj_2}^k ))=(\tau\circ S^0 )(w_{jj_2 }^k )=\tau(w_{jj_2 }^k )$.
Hence
\begin{align*}
\tau\cdot_{\mu}y & =\sum_{i, j, j_1 , j_2 , k}\sqrt{d_k}
(\widehat{u}(S(w_{j_1 j_2 }^k ) \, , \, w_{ij_1 }^k )^* 
\rtimes_{\rho, u}1 )\cdot E^X ((\sqrt{d_k }\rtimes_{\rho, u}w_{ij}^k \tau (w_{jj_2 }^k ))\cdot y) \\
& =\sum_{i, j_1 , j_2 , k}\sqrt{d_k}(\widehat{u}(S(w_{j_1 j_2}^k ) \, ,\, w_{i j_1}^k )^* \rtimes_{\rho, u}1 )
\cdot E^X ((\sqrt{d_k}\rtimes_{\rho, u}\tau(w_{ij_2}^k ))\cdot y) \\
& =\sum_{i, j_1 , j_2 , k}d_k (\widehat{u}(S(w_{j_1 j_2}^k ) \, , \, w_{ij_1 }^k \overline{\tau(w_{ij_2}^k )})^*
\rtimes_{\rho, u}1)\cdot E^X (y) .
\end{align*}
Since $\tau=\tau^* $, $\overline{\tau(w_{ij_2}^k )}=\overline{\tau^* (w_{ij_2}^k)}=\tau(S(w_{ij_2}^{k*}))
=\tau(w_{j_2 i}^k )$.
Hence
\begin{align*}
\tau\cdot_{\mu}y & =\sum_{i, j_1 , j_2 , k}d_k (\widehat{u}(S(w_{j_1 j_2 }^k ) \, , \,
w_{ij_1}^k \tau(w_{j_2 i}^k ))^* \rtimes_{\rho, u}1)\cdot E^X (y) \\
& =\sum_{j_1 , j_2 , k}d_k (\widehat{u}(S(w_{j_1 j_2}^k ) \, , \, \tau(w_{j_2 j_1}^k  ))^* \rtimes_{\rho, u}1)
\cdot E^X (y) \\
& =\sum_{j_1 ,k}d_k (\widehat{u}(S(\tau(w_{j_1 j_1 }^k )) \, , \, 1 )^* \rtimes_{\rho, u}1 )\cdot E^X (y) \\
& =\sum_{j_1 , k}d_k \overline{\epsilon(S(\tau(w_{j_1 j_1 }^k )))}\cdot E^X (y) .
\end{align*}
Since $\epsilon \circ S =\epsilon$, $\epsilon(S(\tau(w_{j_1 j_1 }^k )))=\tau(w_{j_1 j_1 }^k )1$.
Hence
\begin{align*}
\tau\cdot_{\mu} y & =\sum_{j_1 k}d_k \overline{\tau(w_{j_1 j_1}^k )}E^X(y)
=\overline{\sum_{j_1 k}d_k \tau(w_{j_1 j_1}^k )}E^X (y) =N\tau(e)E^X (y) \\
& =E^X (y)
\end{align*}
since $e=\frac{1}{N}\sum_{j,k}d_k w_{jj}^k $ . Therefore, we obtain the conclusion.
\end{proof}

We recall that the unital inclusions of unital $C^*$-algebras, $C\subset C_1$ and $D\subset D_1$
are strongly Morita equivalent with respect to $Y_1$ and its closed subspace $Y$. Also,
$C\subset C_1$ and $D\subset D\rtimes_{\beta}H^0$ are strongly Morita equivalent with
respect to the $C_1 -D\rtimes_{\beta}H^0$-equivalence bimodule $Y\rtimes_{\mu}H^0$
and its closed subspace $Y$, where $Y\rtimes_{\mu}H^0$ is the crossed product of $Y$ by
the coaction $\mu$ and it is a $C_1 -D\rtimes_{\beta}H^0$-equivalence bimodule
(See \cite {KT3:equivalence}). Hence the unital inclusions $D\subset D_1$ and
$D\subset D\rtimes_{\beta}H^0$ are strongly Morita equivalent with respect to
the $D_1 -D\rtimes_{\beta}H^0$-equivalence bimodule $\widetilde{Y_1}\otimes_{C_1}(Y\rtimes_{\mu}H^0 )$
and its closed subspace $\widetilde{Y}\otimes_C Y$. Then since $\widetilde{Y}\otimes_C Y$ is
isomorphic to $D$ as $D-D$-equivalence bimodule, we can see that there is an isomorphism
$\Psi$ of $D_1$ onto $D\rtimes_{\beta}H^0$ which is defined as follows: Since $Y$ is a $C-D$-
equivalence bimodule, there are elements $y_1 , \dots, y_n\in Y$ such that 
$\sum_{i=1}^n \la y_i , y_i \ra_D=1$. Let $\Psi$ be the map from $D_1$ to $D\rtimes_{\beta}H^0$
defined by
$$
\Psi(d)=\sum_{i, j} \la d\cdot \widetilde{y_i}\otimes y_i \, , \, \widetilde{y_j}\otimes y_j \ra_{D\rtimes_{\beta}H^0 }
$$
for any $d\in D_1$. By \cite [Section 5]{KT5:Hopf}, $\Psi$ is an isomorphism of $D_1$ onto
$D\rtimes_{\beta}H^0$ satisfying that $\Psi(d)=d$ for any $d\in D$ and that
$E_1^{\beta}\circ \Psi=E_2^{\sigma, v}$,
where $E_1^{\beta}$ is a canonical conditional expectation from $D\rtimes_{\beta}H^0$ onto $D$ and
$E_2^{\sigma, v}$ is the canonical conditional expectation from $D_1$ onto
$D$.

\begin{remark}\label{rem:dual} $\widetilde{Y}$ is a closed subspace of $\widetilde{Y_1}$ by the
inclusion $\widetilde{\phi}$ defined by
$$
\widetilde{\phi}(\widetilde{y})=\widetilde{\phi(y)}
$$
for any $y\in Y$. Also, $Y$ is a closed subspace
$Y\rtimes_{\mu}H^0$ by the inclusion defined by
$$
Y\longrightarrow Y\rtimes_{\mu}H^0\, : y\mapsto y\rtimes_{\mu}1^0 .
$$
\end{remark}

Let $e_B$ be the Jones projection in $D_1$ for the canonical conditional expectation
$E_1^{\sigma, v}$ from $D$ onto $B$. We identify $e_B$ with the projection $1\rtimes_{\widehat{\sigma}}\tau$
in $D_1$.

\begin{lemma}\label{lem:projection} With the above notation, $\Psi(e_B )=\Psi(1\rtimes_{\widehat{\sigma}}\tau)
=1\rtimes_{\beta}\tau$.
\end{lemma}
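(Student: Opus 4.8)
The plan is to evaluate the defining formula for $\Psi$ at the element $1\rtimes_{\widehat{\sigma}}\tau$ and to recognize the output as $1\rtimes_{\beta}\tau$, with Lemma \ref{lem:action} doing the essential work of collapsing the $H^0$-slot to $\tau$. First I would unwind the left action of $e_B=1\rtimes_{\widehat{\sigma}}\tau$ on the generators $\widetilde{\phi(y_i)}\otimes(y_i\rtimes_{\mu}1^0)$ of the $D_1$--$D\rtimes_{\beta}H^0$-equivalence bimodule $\widetilde{Y_1}\otimes_{C_1}(Y\rtimes_{\mu}H^0)$. Since $D_1$ acts on the left tensor factor $\widetilde{Y_1}$, and that action is dual to the right action of $D_1$ on $Y_1$, I have $(1\rtimes_{\widehat{\sigma}}\tau)\cdot\widetilde{\phi(y_i)}=\widetilde{\phi(y_i)\cdot(1\rtimes_{\widehat{\sigma}}\tau)}$, using $\tau=\tau^{*}$ and the embedding $\widetilde{\phi}$ of Remark \ref{rem:dual}. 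The crucial observation is that $\phi(y_i)\cdot(1\rtimes_{\sigma,v}1\rtimes_{\widehat{\sigma}}\tau)$ is exactly the rightmost ingredient occurring in the definition of the action $\psi\cdot_{\mu}y$.

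Next I would transfer the computation to $Y$ itself. Pairing the expression above into the right $D\rtimes_{\beta}H^0$-valued inner product and invoking the conditional expectation $E^Y$ from $Y_1$ onto $Y$ together with the explicit form $\phi(y)=\sum_{I,J}W_I^{\rho*}\otimes E^X(W_I^{\rho}\cdot y\cdot W_J^{\sigma*})\otimes\widetilde{W_J^{\sigma*}}$, the effect of $e_B$ reduces precisely to the operator computed in Lemma \ref{lem:action}, namely $\tau\cdot_{\mu}y_i=E^X(y_i)$. This is the step that replaces the full $H^0$-dependence coming from the dual coaction $\widehat{\sigma}$ by the single distinguished projection $\tau$, and it is here that the identities $\tau\circ S^0=\tau$, $\tau=\tau^{*}$ and $\epsilon\circ S=\epsilon$ used in the proof of Lemma \ref{lem:action} are really doing the work.

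Then I would reassemble the inner product. After substituting $\tau\cdot_{\mu}y_i=E^X(y_i)$, the double sum $\sum_{i,j}\langle\,\cdots\,\rangle_{D\rtimes_{\beta}H^0}$ should collapse by means of the frame relation $\sum_i\langle y_i,y_i\rangle_D=1$ chosen in the construction of $\Psi$, together with the definition of the induced action $\cdot_{\beta}$ through $\psi\cdot_{\beta}\langle y,z\rangle_D=\langle S^0(\psi_{(1)}^{*})\cdot_{\mu}y,\,\psi_{(2)}\cdot_{\mu}z\rangle_D$. The outcome should be an element of $D\rtimes_{\beta}H^0$ whose $D$-part is $1$ and whose $H^0$-part is $\tau$, that is, $1\rtimes_{\beta}\tau$. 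As an independent consistency check I would apply the relation $E_1^{\beta}\circ\Psi=E_2^{\sigma,v}$ to $1\rtimes_{\widehat{\sigma}}\tau$ and verify that $E_1^{\beta}(1\rtimes_{\beta}\tau)$ agrees with $E_2^{\sigma,v}(1\rtimes_{\widehat{\sigma}}\tau)$, and, using $\Psi|_D=\id$, transport the Jones relation $e_B\,d\,e_B=E_1^{\sigma,v}(d)\,e_B$ for $d\in D$ to the candidate $1\rtimes_{\beta}\tau$.

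The main obstacle will be the bookkeeping in the middle step: one must correctly propagate the Jones projection $1\rtimes_{\widehat{\sigma}}\tau$, which lives in the dual-coaction crossed product $D_1=D\rtimes_{\widehat{\sigma}}H^0$, through the tensor-product identification defining $\Psi$ into the coaction crossed product $D\rtimes_{\beta}H^0$, and to see that the comultiplication, counit and antipode relations for $H$ and $H^0$ conspire to leave exactly $\tau$ in the $H^0$-slot rather than some more complicated Hopf-algebraic combination. Once the reduction to $\tau\cdot_{\mu}y_i=E^X(y_i)$ is in place, the remaining manipulations are the same kind of comatrix-unit computation already carried out in the proof of Lemma \ref{lem:action}, so the conceptual content is entirely concentrated in that reduction.
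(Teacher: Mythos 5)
Your plan is essentially the paper's own proof: expand $\Psi(1\rtimes_{\widehat{\sigma}}\tau)$ against the frame $\{y_i\}$, move the Jones projection across the duality onto $\phi(y_i)$ (exactly as in the paper, using $[y_i\cdot(1\rtimes_{\widehat{\sigma}}\tau)]^{\widetilde{}}$), exploit the quasi-basis form of $\phi$ together with Lemma \ref{lem:action} to force $\tau$ into the $H^0$-slot, and collapse everything with the frame relation $\sum_i \la y_i , y_i \ra_D =1$ and the formula for the $D\rtimes_{\beta}H^0$-valued inner product on $Y\rtimes_{\mu}H^0$, so the verdict is: correct approach, same route. One caution for the execution: right multiplication by $e_B$ alone is \emph{not} the operator $\tau\cdot_{\mu}$ of Lemma \ref{lem:action} (by definition $\tau\cdot_{\mu}$ also requires left multiplication by $e_A =1\rtimes_{\widehat{\rho}}\tau$ and the expectation $E^Y$); in the paper one only gets $\phi(y_i )\cdot e_B =\sum_k u_k \otimes E^X (u_k^* \cdot y_i )\otimes\widetilde{1_D}$, the projection $e_A$ materializes afterwards from the $C_1$-valued inner products ${}_{C_1}\la \phi(y_i )\cdot e_B \, , \, \phi(y_j )\ra$, and only then does Lemma \ref{lem:action} convert $e_A \cdot E^X (u_l^* \cdot y_j )$ into $E^X (u_l^* \cdot y_j )\rtimes_{\mu}\tau$, with the quasi-basis identity $\sum_l u_l \cdot E^X (u_l^* \cdot y_j )=y_j$ of \cite[Lemma 5.4]{KT4:morita} completing the collapse — so your middle step, as literally stated, would need this correction when you carry out the bookkeeping you rightly identify as the main obstacle.
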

\begin{proof} The lemma can be proved by routine computations. Indeed,
we note that $\widetilde{Y}$ is regarded as a closed subspace of $\widetilde{Y_1}$
by the inclusion $\widetilde{\phi}$ and $Y$ is a regarded as a closed
subspace $Y\rtimes_{\beta}1^0$ of $Y\rtimes_{\beta}H^0$. Then
\begin{align*}
\Psi(e_B ) & =\Psi(1\rtimes_{\widehat{\sigma}}\tau)
=\sum_{i, j}\la (1\rtimes_{\widehat{\sigma}}\tau)\cdot (\widetilde{y_i}\otimes y_i ) \, , \,
\widetilde{y_j }\otimes y_j \ra_{D\rtimes_{\beta}H^0} \\
& =\sum_{i, j}\la [y_i \cdot (1\rtimes_{\widehat{\tau}}\tau)]^{\widetilde{}}\otimes y_i \, , \,
\widetilde{y_j}\otimes y_j \ra_{D\rtimes_{\beta}H^0 } \\
& =\sum_{i, j}\la y_i \, , \, \la [y_i \cdot (1\rtimes_{\widehat{\sigma}}\tau)]^{\widetilde{}} \, , \,
\widetilde{y_j} \ra_{C_1}\cdot y_j \ra_{D\rtimes_{\beta}H^0} .
\end{align*}
We note that $\widetilde{y_i}, \widetilde{y_j}\in \widetilde{Y}\subset \widetilde{Y_1}$ and that
$y_i , y_j \in Y=Y\rtimes_{\mu}1^0 \subset Y\rtimes_{\mu}H^0$. Hence
$$
\Psi(e_B )=\sum_{i, j}\la y_i \rtimes_{\mu}1^0 \, , \,
{}_{C_1} \la \phi(y_i )\cdot (1\rtimes_{\widehat{\sigma}}\tau)\, , \,
\phi(y_j )\ra \cdot y_j \rtimes_{\mu}1^0 \ra_{D\rtimes_{\beta}H^0} .
$$
Furthermore, let $\{(u_k , \, u_k^* )\}$ and $\{(v_l , \, v_l ^* )\}$ be quasis-bases for $E_1^{\rho, u}$ and
$E_1^{\sigma, v}$, respectively. Then
\begin{align*}
\phi(y_i )\cdot(1\rtimes_{\widehat{\sigma}}\tau) & =\phi(y_i )\cdot e_B
=\sum_{k, l}u_k \otimes E^X (u_k^* \cdot y_i \cdot v_l )\otimes\widetilde{v_l}\cdot e_B \\
& =\sum_{k, l}u_k \otimes E^X (u_k^* \cdot y_i \cdot v_l )\otimes \widetilde{E^B}(v_l ) \\
& =\sum_{k. l}u_k \otimes E^X (u_k^* \cdot y_i \cdot v_l E^B (v_l^* ))\otimes\widetilde{1_D} \\
& =\sum_{k}u_k \otimes E^X (u_k^* \cdot y_i )\otimes\widetilde{1_D} .
\end{align*}
Hence since $1\rtimes_{\widehat{\sigma}}\tau$ is a projection in $D_1$,
\begin{align*}
{}_{C_1} \la y_i \cdot (1\rtimes_{\widehat{\sigma}}\tau) \, , \, y_j \ra & =
{}_{C_1} \la y_i \cdot (1\rtimes_{\widehat{\sigma}}\tau )\, , \, y_j \cdot (1\rtimes_{\widehat{\sigma}}\tau) \ra \\
& =\sum_{k, l}{}_{C_1} \la u_k \otimes E^X (u_k^* \cdot y_i )\otimes \widetilde{1_D} \, , \,
u_l \otimes E^X (u_l^* \cdot y_j )\otimes \widetilde{1_D} \ra \\
& =\sum_{k, l}{}_{C_1}\la u_k \cdot {}_A \la E^X (u_k^* \cdot y_i )\otimes\widetilde{1_D} \, , \, 
E^X (u_l^* \cdot y_j )\otimes\widetilde{1_D} \ra \, , \, u_l \ra \\
& =\sum_{k, l}{}_{C_1} \la u_k \cdot {}_A \la E^X (u_k^* \cdot y_i )\cdot {}_B \la \widetilde{1_D} \, , \,
\widetilde{1_D} \ra \, , \, E^X (u_l^* \cdot y_j )\ra \, , \, u_l \ra \\
& =\sum_{k, l}{}_{C_1} \la u_k \cdot {}_A \la E^X (u_k^* \cdot y_i  )\, , \,
E^X (u_l^* \cdot y_j ) \ra \, , \, u_l \ra \\
& =\sum_{k, l}{}_{C_1} \la u_k E^A ({}_C \la u_k^* \cdot y_i \, ,\, E^X (u_l^* \cdot y_j )\ra) \, , \, u_l \ra \\
& =\sum_{k, l}{}_{C_1} \la u_k E^A (u_k^* {}_C \la y_i \, , \, E^X (u_l^* \cdot y_j )\ra ) \, , \, u_l \ra \\
& =\sum_l {}_{C_1} \la {}_C \la y_i \, , \, E^X (u_l^* \cdot y_j )\ra \, , \, u_l \ra \\
& =\sum_l {}_C \la y_i \, ,\, E^X (u_l^* \cdot y_j )\ra e_A u_l^* .
\end{align*}
Thus
\begin{align*}
\Psi(e_B )  & =\sum_{i, j, l}\la y_i \rtimes_{\mu}1^0 \, , \, {}_C \la y_i \, , \, E^X (u_l^* \cdot y_j )\ra
e_A u_l^* \cdot (y_j \rtimes_{\mu}1^0 )\ra_{D\rtimes_{\beta}H^0 } \\
& =\sum_{i, j, l}\la {}_C \la E^X (u_l^* \cdot y_j ) \, , \, y_i \ra \cdot (y_i\rtimes_{\mu}1^0 ) \, , \,
e_A u_l^* \cdot (y_j\rtimes_{\mu}1^0 ) \ra_{D\rtimes_{\beta}H^0} \\
& =\sum_{i, j, l}\la E^X (u_l^* \cdot y_j )\cdot \la y_i \, ,\, y_i \ra _D \, , \,
e_A u_l^* \cdot y_j \ra_{D\rtimes_{\beta}H^0} \\
& =\sum_{j, l} \la E^X (u_l^* \cdot y_j ) \, , \, e_A u_l^* \cdot y_j \ra_{D\rtimes_{\beta}H^0} \\
& =\sum_{j, l}\la u_l e_A \cdot E^X (u_l^* \cdot y_j ) \, , \, y_j \ra_{D\rtimes_{\beta}H^0} .
\end{align*}
Since we identify $e_A$ with $1\rtimes_{\widehat{\rho}}\tau$, we obtain that
\begin{align*}
u_l e_A \cdot E^X (u_l^* \cdot y_j ) & =(u_l \rtimes_{\widehat{\rho}}1^0 )(1\rtimes_{\widehat{\rho}}\tau)
\cdot (E^X (u_l^* \cdot y_j )\rtimes_{\mu}1^0 ) \\
& =(u_l \rtimes_{\widehat{\rho}}\tau)\cdot (E^X (u_l^* \cdot y_j )\rtimes_{\mu}1^0 ) .
\end{align*}
By Lemma \ref {lem:action}, $E^X (u_l^* \cdot y_j )=\tau' \cdot_{\mu}(u_l^* \cdot y_j )$,
where $\tau' =\tau$. Thus
\begin{align*}
u_l e_A \cdot E^X (u_l^* \cdot y_j ) & =(u_l \rtimes_{\widehat{\rho}}\tau)\cdot
[\tau' \cdot_{\mu}(u_l^* \cdot y_j )]\rtimes_{\mu}1^0 \\
& =u_l [\tau_{(1)}\cdot_{\mu}[\tau' \cdot_{\mu}(u_l^* \cdot y_j )]\rtimes_{\mu}\tau_{(2)}] \\
& =u_l [\tau' \cdot_{\mu}(u_l^* \cdot y_j )]\rtimes_{\mu}\tau \\
& =u_l E^X (u_l^* \cdot y_j )\rtimes_{\mu}\tau .
\end{align*}
It follows by \cite [Lemma 5.4]{KT4:morita} that
\begin{align*}
\Psi(e_B ) & =\sum_{j, l}\la u_l E^X (u_l^* \cdot y_j )\rtimes_{\mu}\tau \, , \, y_j \ra_{D\rtimes_{\beta}H^0}
=\sum_j \la y_j \rtimes_{\mu}\tau \, , \, y_j \rtimes_{\mu}1^0 \ra_{D\rtimes_{\beta}H^0} \\
& =\sum_j \tau_{(1)}^* \cdot_{\mu} \la y_j \, , \, y_j \ra_D \rtimes_{\beta}\tau_{(2)}^* 
= [\tau_{(1)}^* \cdot_{\mu} 1_D ]\rtimes_{\beta}\tau_{(2)}^* =1\rtimes_{\beta}\tau^* =1\rtimes_{\beta}\tau .
\end{align*}
Therefore we obtain the conclusion.
\end{proof}

Let $(Y\rtimes_{\mu}H^o )_{\Psi}$ be the $C_1 -D_1$-equivalence bimodule induced by
the $C_1 -D\rtimes_{\beta}H^0$-equivalence bimodule $Y\rtimes_{\mu}H^0$ and the isomorphism $\Psi$
of $D_1$ onto $D\rtimes_{\beta}H^0$. Let $E_1^{\mu}$ be the linear map from $Y\rtimes_{\mu}H^0$ onto
$Y$ defined by
$$
E_1^{\mu}(y\rtimes_{\mu}\psi)=\psi(e)y
$$
for any $y\in Y$, $\psi\in H^0$, where $y\rtimes_{\mu}\psi$ is the element
in $Y\rtimes_{\mu}H^0$ induced by $y\in Y$, $\psi\in H^0$.
Then $E_1^{\mu}$ is a conditional expectation from $Y\rtimes_{\mu}H^0$ onto $Y$
with respect to $E_2^{\rho, u}$ and $E_1^{\beta}$, the canonical conditional expectation
from $D\rtimes_{\beta}H^0$ onto $D$ by \cite [Proposition 4.1]{KT4:morita}.
Let $E_1^{\mu, \Psi}$ be the linear map from $(Y\rtimes_{\mu}H^0 )_{\Psi}$ onto $Y$
induced by $E_1^{\mu}$ and $\Psi$.

\begin{lemma}\label{lem:expectation} With the above notation, $E_1^{\mu,\Psi}$ is a
conditional expectation from $(Y\rtimes_{\mu}H^0 )_{\Psi}$ onto $Y$ with respect to
$E_2^{\rho, u}$ and $E_2^{\sigma, v}$.
\end{lemma}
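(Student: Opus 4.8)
The plan is to transport the conditional-expectation property of $E_1^{\mu}$ through the isomorphism $\Psi$. Recall that $E_1^{\mu}$ is already known to be a conditional expectation from $Y\rtimes_{\mu}H^0$ onto $Y$ with respect to $E_2^{\rho, u}$ and $E_1^{\beta}$, and that $(Y\rtimes_{\mu}H^0 )_{\Psi}$ is the $C_1 - D_1$-equivalence bimodule obtained from $Y\rtimes_{\mu}H^0$ by relabelling the right $D\rtimes_{\beta}H^0$-action through $\Psi^{-1}\colon D_1\to D\rtimes_{\beta}H^0$. The map $E_1^{\mu,\Psi}$ is $E_1^{\mu}$ viewed on this relabelled bimodule. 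What needs to be checked is exactly the list of conditions in \cite [Definition 2.4]{KT4:morita} defining a conditional expectation on an equivalence bimodule with respect to a given pair of base-algebra expectations, now with the pair $(E_2^{\rho, u}, E_2^{\sigma, v})$ in place of $(E_2^{\rho, u}, E_1^{\beta})$.

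\textbf{Key steps.}
First I would record the two defining relations of $\Psi$ established in \cite [Section 5]{KT5:Hopf} and quoted above: $\Psi|_D =\id_D$ and $E_1^{\beta}\circ\Psi=E_2^{\sigma, v}$. The left-hand $C_1$-structure of $(Y\rtimes_{\mu}H^0)_{\Psi}$ and its left $C_1$-expectation condition (involving $E_2^{\rho, u}$) are untouched by the relabelling, so those conditions carry over verbatim from $E_1^{\mu}$. Next I would verify the right-module compatibility: for $y\rtimes_{\mu}\psi\in (Y\rtimes_{\mu}H^0)_{\Psi}$ and $d\in D_1$, the right action is $(y\rtimes_{\mu}\psi)\cdot d=(y\rtimes_{\mu}\psi)\cdot_{\mu}\Psi(d)$, and one computes that $E_1^{\mu,\Psi}$ intertwines this with $E_2^{\sigma, v}$ precisely because $E_1^{\beta}(\Psi(d))=E_2^{\sigma, v}(d)$ and $E_1^{\mu}$ already intertwines with $E_1^{\beta}$. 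The analogous statement for the right $D_1$-valued inner product follows the same way: the inner product on $(Y\rtimes_{\mu}H^0)_{\Psi}$ is $\Psi^{-1}$ applied to the $D\rtimes_{\beta}H^0$-valued inner product of $Y\rtimes_{\mu}H^0$, and composing with $E_2^{\sigma, v}=E_1^{\beta}\circ\Psi$ reduces every occurrence of $E_2^{\sigma, v}$ back to a known $E_1^{\beta}$-identity for $E_1^{\mu}$. Finally I would confirm that $E_1^{\mu,\Psi}$ restricts to the identity on $Y$ and is positive and faithful, which is inherited from $E_1^{\mu}$ since $\Psi|_D=\id_D$ fixes the copy of $D$ inside which $Y$ sits as a closed subspace.

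\textbf{Main obstacle.}
The only genuine point requiring care is that all six conditions in \cite [Definition 2.4]{KT4:morita} are matched consistently under the relabelling, and in particular that the bimodule $(Y\rtimes_{\mu}H^0)_{\Psi}$ has $Y$ as a closed subspace in the sense required, with $Y$ carrying the correct $C - D$-bimodule structure relative to the inclusions $A\subset C$ and $B\subset D$. Because $\Psi$ fixes $D$ pointwise and $Y$ is realized as $Y\rtimes_{\mu}1^0$ inside $Y\rtimes_{\mu}H^0$, the inclusion $Y\subset (Y\rtimes_{\mu}H^0)_{\Psi}$ is literally the same subspace as $Y\subset Y\rtimes_{\mu}H^0$; this is what makes the transport clean rather than forcing a genuine recomputation. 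Thus the entire proof is the bookkeeping of substituting $E_2^{\sigma, v}=E_1^{\beta}\circ\Psi$ into the conditions already verified for $E_1^{\mu}$, and I would present it as exactly that substitution together with a one-line invocation of $\Psi|_D=\id_D$.
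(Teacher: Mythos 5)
Your proposal is correct and follows essentially the same route as the paper: both verify the six conditions of \cite[Definition 2.4]{KT4:morita} for $E_1^{\mu,\Psi}$ by exploiting the two relations $\Psi|_{D}=\id_{D}$ and $E_1^{\beta}\circ\Psi=E_2^{\sigma, v}$, which is exactly how the paper handles Conditions (4)--(6). The only difference is presentational: the paper re-derives the left-sided Conditions (1)--(3) by direct crossed-product computations, whereas you inherit them verbatim from \cite[Proposition 4.1]{KT4:morita} (which the paper itself invokes just before the lemma); this shortcut is legitimate because the relabelling by $\Psi$ changes neither the left $C_1$-structure, nor the embedded copy of $Y$, nor the underlying map $E_1^{\mu}$.
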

\begin{proof} We shall show that Conditions (1)-(6) in \cite [Definition 2.4]{KT4:morita}
hold. Let $y, z\in Y$, $c\in C$, $d\in D$ and $\psi\in H^0$.
\newline
(1)
\begin{align*}
E_1^{\mu, \Psi}((c\rtimes_{\widehat{\rho}}\psi)\cdot y) & = E_1^{\mu, \Psi}((c\rtimes_{\widehat{\rho}}\psi)
\cdot(y\rtimes_{\mu}1^0 ))
=E_1^{\mu, \Psi}(c\cdot [\psi_{(1)}\cdot_{\mu}y]\rtimes_{\mu}\psi_{(2)}) \\
& =c\cdot [\psi_{(1)}\cdot_{\mu}y]\psi_{(2)}(e)=c\cdot\psi(e)y=\psi(e)c\cdot y .
\end{align*}
On the other hand,
$$
E_2^{\rho, u}(c\rtimes_{\widehat{\rho}}\psi)\cdot y=\psi(e)c\cdot y .
$$
Hence Condition (1) holds.
\newline
(2)
$$
E_1^{\mu, \Psi}(c\cdot (y\rtimes_{\mu}\psi))=E_1^{\mu, \Psi}((c\cdot y)\rtimes_{\mu}\psi)
=c\cdot y\psi(e)=\psi(e)c\cdot y .
$$
On the other hand,
$$
c\cdot E_1^{\mu, \Psi}(y\rtimes_{\mu}\psi)=c\cdot \psi(e)y=\psi(e)c\cdot y .
$$
Hence Condition (2) holds.
\newline
(3)
\begin{align*}
E_2^{\rho, u}({}_{C\rtimes_{\widehat{\rho}}H^0} \la y\rtimes_{\mu}\psi \, , \, z \ra) & =E_2^{\rho, u}
({}_{C\rtimes_{\widehat{\rho}}H^0} \la y\rtimes_{\mu}\psi\, , \, z\rtimes_{\mu}1^0 \ra) \\
& =E_2^{\rho, u}({}_C \la y \, , \, [S^0 (\psi_{(1)}^* )\cdot_{\mu}z]\ra\rtimes_{\widehat{\rho}}\psi_{(2)}) \\
& ={}_C \la y \, , \, [S^0 (\psi_{(1)}^* )\cdot_{\mu}z]\ra \psi_{(2)}(e) \\
& ={}_C \la y \, , \, [\overline{\psi_{(2)}(e)}S^0 (\psi_{(1)}^* )\cdot_{\mu} z]\ra \\
& ={}_C \la y\, , \,[S^0 (\psi_{(2)}^* )(e)S^0 (\psi_{(1)}^* )\cdot_{\mu} z] \ra \\
& ={}_C \la y\, , \, [e(S^0 (\psi^* )\cdot_{\mu} z]\ra \\
& ={}_C \la y \, , \, \overline{\psi(e)}z \ra=\psi(e)\,{}_C \la y , z \ra .
\end{align*}
On the other hand,
$$
{}_C \la E_1^{\mu, \Psi}(y\rtimes_{\mu}\psi)\, , \, z \ra ={}_C \la \psi(e)y \, , \, z \ra =\psi(e){}_C \la y \, , \, z \ra .
$$
Hence Condition (3) holds.
\newline
(4)
\begin{align*}
E_1^{\mu, \Psi}(y\cdot (d\rtimes_{\widehat{\sigma}}\psi)) & =
E_1^{\mu}(y\cdot \Psi(d\times_{\widehat{\sigma}}\psi))
=y\cdot E_1^{\beta}(\Psi(d\rtimes_{\widehat{\sigma}}\psi)) \\
& =y\cdot E_2^{\sigma, v}(d\rtimes_{\widehat{\sigma}}\psi) .
\end{align*}
Hence Condition (4) holds.
\newline
(5)
\begin{align*}
E_1^{\mu, \Psi}((y\rtimes_{\mu}\psi)\cdot d) & =E_1^{\mu}((y\rtimes_{\mu}\psi)\cdot\Psi(d))
=E_1^{\mu}(y\rtimes_{\mu}\psi)\cdot\Psi(d) \\
& =E_1^{\mu, \Psi}(y\rtimes_{\mu}\psi)\cdot d .
\end{align*}
Hence Condition (5) holds.
\newline
(6)
\begin{align*}
E_2^{\sigma, v}(\la y\rtimes_{\mu}\psi \, , \, z \ra_{D\rtimes_{\widehat{\sigma}}H^0}) & =
E_2^{\sigma, v}(\Psi^{-1}(\la y\rtimes_{\mu}\psi \, , \, z\rtimes_{\mu}1^0 \ra_{D\rtimes_{\beta}H^0})) \\
& =E_1^{\beta}(\la y\rtimes_{\mu}\psi \, ,\, z\rtimes_{\mu}1^0 \ra_{D\rtimes_{\beta}H^0}) \\
& =E_1^{\beta}([\psi_{(1)}^* \cdot_{\beta} \la y\, ,\, z \ra_D ]\rtimes_{\beta}\psi_{(2)}^* ) \\
& =\psi(e) \la y \, , \, z \ra_D .
\end{align*}
On the other hand,
$$
\la E_1^{\mu, \Psi}(y\rtimes_{\mu}\psi) \, , \, z\rtimes_{\mu}1^0 \ra_{D\rtimes_{\widehat{\sigma}}H^0 } 
=\Psi^{-1}(\la \psi(e)y \, , \, z \ra_{D\rtimes_{\beta}H^0})=\psi(e) \la y \, , \, z \ra_D .
$$
Hence Condition (6) holds. Therefore, we obtain the conclusion.
\end{proof}

\begin{lemma}\label{lem:jones} With the above notation, for any $y\in Y$,
$$
E_1^{\mu, \Psi}(e_A \cdot y\cdot e_B )=\frac{1}{N}E^X (y) .
$$
\end{lemma}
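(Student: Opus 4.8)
The plan is to compute $e_A\cdot y\cdot e_B$ explicitly as a single element of the underlying space of $Y\rtimes_{\mu}H^0$ and then apply $E_1^{\mu}$. The first observation I would record is that, since $(Y\rtimes_{\mu}H^0)_{\Psi}$ has the same underlying space as $Y\rtimes_{\mu}H^0$ and $E_1^{\mu,\Psi}$ is the map induced by $E_1^{\mu}$ and $\Psi$, we have $E_1^{\mu,\Psi}(w)=E_1^{\mu}(w)$ on that common space. Moreover the right action of $e_B\in D_1$ on $(Y\rtimes_{\mu}H^0)_{\Psi}$ is precisely the $D\rtimes_{\beta}H^0$-action by $\Psi(e_B)=1\rtimes_{\beta}\tau$, which is exactly the content of Lemma \ref{lem:projection}. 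Hence it suffices to evaluate $E_1^{\mu}$ on $(1\rtimes_{\widehat{\rho}}\tau)\cdot(y\rtimes_{\mu}1^0)\cdot(1\rtimes_{\beta}\tau)$, where $e_A=1\rtimes_{\widehat{\rho}}\tau$ and $y$ is identified with $y\rtimes_{\mu}1^0$ as in Remark \ref{rem:dual}.

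Next I would carry out the two module-action computations. For the right action, using $\Delta^0(1^0)=1^0\otimes1^0$ and that $1^0$ acts as the identity (so $1^0\cdot_{\beta}1=1$), one gets $(y\rtimes_{\mu}1^0)\cdot(1\rtimes_{\beta}\tau)=y\rtimes_{\mu}\tau$. For the left action I would use the same formula already appearing in Condition (1) of the proof of Lemma \ref{lem:expectation}, namely $(1\rtimes_{\widehat{\rho}}\tau)\cdot(z\rtimes_{\mu}\chi)=(\tau_{(1)}\cdot_{\mu}z)\rtimes_{\mu}\tau_{(2)}\chi$. Taking $z=y$ and $\chi=\tau$ this yields
$$
e_A\cdot y\cdot e_B=(\tau_{(1)}\cdot_{\mu}y)\rtimes_{\mu}\tau_{(2)}\tau .
$$

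Finally I would apply $E_1^{\mu}$ and simplify. By the defining formula $E_1^{\mu}(z\rtimes_{\mu}\psi)=\psi(e)z$ this gives $\sum(\tau_{(2)}\tau)(e)\,(\tau_{(1)}\cdot_{\mu}y)$. The crucial algebraic input is that $\tau$ is the two-sided normalized Haar integral of $H^0$, hence in particular a left integral, so $\psi\tau=\epsilon^0(\psi)\tau$ for every $\psi\in H^0$. Applying this with $\psi=\tau_{(2)}$ and using the counit axiom gives $\sum\tau_{(1)}\otimes\tau_{(2)}\tau=\tau\otimes\tau$, so the expression collapses to $\tau(e)\,(\tau\cdot_{\mu}y)$. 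Since $\tau(e)=\frac{1}{N}$ (because $e=\frac{1}{N}\sum_{j,k}d_k w_{jj}^k$, exactly the normalization used at the end of the proof of Lemma \ref{lem:action}) and $\tau\cdot_{\mu}y=E^X(y)$ by Lemma \ref{lem:action}, I conclude $E_1^{\mu,\Psi}(e_A\cdot y\cdot e_B)=\frac{1}{N}E^X(y)$.

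The only delicate points, and hence the steps I would be most careful about, are bookkeeping ones: writing the left and right crossed-product actions in the correct Sweedler form (these are pinned down by the computations already present in the proof of Lemma \ref{lem:expectation}), making sure the $\Psi$-twist on the right is absorbed precisely by Lemma \ref{lem:projection}, and invoking the left-integral identity $\psi\tau=\epsilon^0(\psi)\tau$ rather than merely the idempotency of $\tau$. No analytic difficulty arises; the mathematical content is entirely the Hopf-algebraic identity $\sum\tau_{(1)}\otimes\tau_{(2)}\tau=\tau\otimes\tau$ together with Lemmas \ref{lem:action} and \ref{lem:projection}.
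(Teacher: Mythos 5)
Your proof is correct and follows essentially the same route as the paper's: reduce via Lemma \ref{lem:projection} to $E_1^{\mu}((1\rtimes_{\widehat{\rho}}\tau)\cdot(y\rtimes_{\mu}1^0)\cdot(1\rtimes_{\beta}\tau))$, compute the two crossed-product actions to get $[\tau_{(1)}\cdot_{\mu}y]\rtimes_{\mu}\tau_{(2)}\tau=[\tau\cdot_{\mu}y]\rtimes_{\mu}\tau$, and finish with $\tau(e)=\frac{1}{N}$ and Lemma \ref{lem:action}. The only difference is that you make explicit the left-integral identity $\psi\tau=\epsilon^0(\psi)\tau$ justifying the Sweedler collapse, which the paper uses silently.
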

\begin{proof} By the definition of $E_1^{\mu, \Psi}$ and Lemma \ref{lem:projection},
$$
E_1^{\mu, \Psi}(e_A \cdot y \cdot e_B )=E_1^{\mu}((1\rtimes_{\widehat{\rho}}\tau)\cdot y \cdot \Psi(e_B ))
=E_1^{\mu}((1\rtimes_{\widehat{\rho}}\tau)\cdot y\cdot (1\rtimes_{\beta}\tau)) .
$$
Also,
\begin{align*}
(1\rtimes_{\widehat{\rho}}\tau)\cdot y\cdot (1\rtimes_{\beta}\tau) & =(1\rtimes_{\widehat{\rho}}\tau)
\cdot (y\rtimes_{\mu}1^0 )\cdot (1\rtimes_{\beta}\tau)=(1\rtimes_{\widehat{\rho}}\tau)\cdot (y\rtimes_{\mu}\tau) \\
& =[\tau_{(1)}\cdot_{\mu}y]\rtimes_{\mu}\tau_{(2)}\tau' =[\tau\cdot_{\mu}y]\rtimes_{\mu}\tau ,
\end{align*}
where $\tau' =\tau$. Hence
$$
E_1^{\mu, \Psi}(e_A \cdot y \cdot e_B )=E_1 ^{\mu}([\tau\cdot_{\mu}y]\rtimes_{\mu}\tau )
=[\tau\cdot_{\mu}y]\tau(e)=\frac{1}{N}E^X (y)
$$
by Lemma \ref{lem:action}.
\end{proof}

\begin{prop}\label{prop:iso1} With the above notation, there is a $C_1 -D_1$-equivalence
bimodule isomorphism $\theta$ of $Y_1$ onto $(Y\rtimes_{\mu}H^0 )_{\Psi}$ such that
$E_1^{\mu, \Psi}=E^Y \circ\theta$.
\end{prop}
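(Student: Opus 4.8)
The plan is to recognize both $Y_1$ and $(Y\rtimes_{\mu}H^0)_{\Psi}$ as realizations of the upward basic construction of the $C-D$-equivalence bimodule $Y$ relative to the conditional expectation $E^X$, and then to invoke the uniqueness of that construction. By its very construction in \cite[Section 6]{KT4:morita}, the pair $(Y_1, E^Y)$ is such a basic construction: $Y_1$ is a $C_1-D_1$-equivalence bimodule containing $Y$ as a closed subspace, and $E^Y$ is a conditional expectation from $Y_1$ onto $Y$ with respect to the pair $(E_2^{\rho, u}, E_2^{\sigma, v})$. So it would suffice to show that $((Y\rtimes_{\mu}H^0)_{\Psi}, E_1^{\mu, \Psi})$ enjoys the same defining properties, after which the required isomorphism comes for free from uniqueness.

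First I would collect the facts already in hand. The bimodule $(Y\rtimes_{\mu}H^0)_{\Psi}$ is a $C_1-D_1$-equivalence bimodule containing $Y$ as a closed subspace (via $y\mapsto y\rtimes_{\mu}1^0$), and by Lemma \ref{lem:expectation} the map $E_1^{\mu, \Psi}$ is a conditional expectation from it onto $Y$ with respect to $(E_2^{\rho, u}, E_2^{\sigma, v})$, i.e. the same pair governing $E^Y$. What then remains is to see that $E_1^{\mu, \Psi}$ is pinned to $E^X$ through the Jones projections $e_A=1\rtimes_{\widehat{\rho}}\tau$ and $e_B=1\rtimes_{\widehat{\sigma}}\tau$ in exactly the way $E^Y$ is. For $E_1^{\mu, \Psi}$ this is precisely Lemma \ref{lem:jones}, which gives $E_1^{\mu, \Psi}(e_A\cdot y\cdot e_B)=\frac{1}{N}E^X (y)$ for every $y\in Y$. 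On the $Y_1$ side the same compression identity $E^Y (e_A\cdot y\cdot e_B)=\frac{1}{N}E^X (y)$ holds; I would read it off from the formula $E^Y (c\otimes x\otimes\widetilde{d})=\frac{1}{N}c\cdot x\cdot d^*$ together with the embedding $\phi$ of $Y$ into $Y_1$ and the commutation of $e_A, e_B$ with the module actions, or else simply quote it as part of the construction of $Y_1$ in \cite[Section 6]{KT4:morita}. Thus both expectations are determined by the same compression data.

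At this point both $(Y_1, E^Y)$ and $((Y\rtimes_{\mu}H^0)_{\Psi}, E_1^{\mu, \Psi})$ satisfy all the properties characterizing the upward basic construction of $(Y, E^X)$, so the uniqueness of that construction in \cite{KT4:morita} furnishes a $C_1-D_1$-equivalence bimodule isomorphism $\theta$ of $Y_1$ onto $(Y\rtimes_{\mu}H^0)_{\Psi}$ that fixes $Y$ pointwise and intertwines the two conditional expectations, which is the asserted relation $E_1^{\mu, \Psi}=E^Y\circ\theta$. I expect the main obstacle to lie not in producing $\theta$ once the hypotheses are in place, but in the bookkeeping that guarantees the hypotheses genuinely match: checking the compression identity $E^Y (e_A\cdot y\cdot e_B)=\frac{1}{N}E^X (y)$ against Lemma \ref{lem:jones}, and confirming that the common copy of $Y$, the $C_1$- and $D_1$-module structures, and the pair $(E_2^{\rho, u}, E_2^{\sigma, v})$ constitute literally the same basic-construction datum for both bimodules, so that the isomorphism delivered by uniqueness is automatically identity-on-$Y$ and expectation-preserving.
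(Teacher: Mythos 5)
Your proposal is correct and follows essentially the same route as the paper: the paper's proof simply cites Lemma \ref{lem:jones} together with the characterization of the upward basic construction in \cite[Theorem 6.13]{KT4:morita}, which is exactly the uniqueness statement you invoke after checking that $E_1^{\mu,\Psi}$ is a conditional expectation with respect to $(E_2^{\rho,u}, E_2^{\sigma,v})$ (Lemma \ref{lem:expectation}) satisfying the compression identity. Your additional bookkeeping about matching the common copy of $Y$ and the expectation data is precisely what makes the citation ``immediate.''
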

\begin{proof} This is immediate by Lemma \ref{lem:jones} and \cite [Theorem 6.13]{KT4:morita}.
\end{proof}

Next, modifying the discussions of \cite [Section 5]{KT5:Hopf}, we shall show that
there is a $C^*$-Hopf algebra automorphism $f^0$ of $H^0$ such that
$$
\widehat{\beta}\circ\Psi=(\Psi\otimes f^0 )\circ\widehat{\widehat{\sigma}} ,
$$
where $\widehat{\beta}$ is the dual coaction of $\beta$ and $\widehat{\widehat{\sigma}}$ is the
second dual coaction of $(\sigma, v)$.

\begin{lemma}\label{lem:restriction} With the above notation, $\Psi|_{B' \cap D_1}$, the restriction
of $\Psi$ to $B' \cap D_1$ is an isomorphism of $B' \cap D_1$ onto $B' \cap (D\rtimes_{\beta}H^0)$.
\end{lemma}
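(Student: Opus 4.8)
The plan is to exploit the fact, recorded just before Lemma \ref{lem:projection}, that $\Psi$ is a $*$-isomorphism of $D_1$ onto $D\rtimes_{\beta}H^0$ satisfying $\Psi(d)=d$ for every $d\in D$. Since both relative commutants are taken with respect to the \emph{same} subalgebra $B$, which sits inside $D$ and is therefore fixed pointwise by $\Psi$, the assertion reduces to the purely formal statement that an algebra isomorphism fixing $B$ pointwise carries the relative commutant of $B$ onto the relative commutant of $B$. So the argument should be short, resting only on the multiplicativity and bijectivity of $\Psi$.

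Concretely, I would first note that $B\subset D$ and $\Psi|_D=\id_D$ give $\Psi(b)=b$ for all $b\in B$. For the inclusion $\Psi(B'\cap D_1)\subset B'\cap(D\rtimes_{\beta}H^0)$, I would take $x\in B'\cap D_1$ and apply $\Psi$ to the relation $xb=bx$ for arbitrary $b\in B$; multiplicativity together with $\Psi(b)=b$ yields $\Psi(x)\,b=b\,\Psi(x)$, so $\Psi(x)\in B'\cap(D\rtimes_{\beta}H^0)$. For the reverse inclusion I would use that $\Psi$ is onto: given $y\in B'\cap(D\rtimes_{\beta}H^0)$, write $y=\Psi(x)$ with $x\in D_1$, and observe that for each $b\in B$ one has $\Psi(xb)=\Psi(x)\,b=yb=by=b\,\Psi(x)=\Psi(bx)$, whence injectivity of $\Psi$ forces $xb=bx$ and $x\in B'\cap D_1$. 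Combining the two inclusions, $\Psi$ restricts to a bijective $*$-homomorphism of $B'\cap D_1$ onto $B'\cap(D\rtimes_{\beta}H^0)$, which is the claimed isomorphism (the restriction of a $*$-isomorphism to a $C^*$-subalgebra being again a $*$-isomorphism onto its image).

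I do not expect a genuine obstacle here, since everything follows from $\Psi$ being a bijective homomorphism with $\Psi|_D=\id_D$. The only point deserving a moment's attention is that surjectivity onto the target relative commutant truly requires the \emph{injectivity} of $\Psi$, used to pull the commutation relation back from $D\rtimes_{\beta}H^0$ to $D_1$; without it one would only obtain a homomorphism into $B'\cap(D\rtimes_{\beta}H^0)$ rather than onto it.
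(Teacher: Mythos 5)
Your proof is correct and follows essentially the same route as the paper: apply multiplicativity of $\Psi$ together with $\Psi|_{B}=\id_{B}$ (which holds since $B\subset D$ and $\Psi$ fixes $D$ pointwise) to the commutation relation defining the relative commutant. Your write-up is in fact slightly more complete than the paper's, which only verifies the forward inclusion $\Psi(B'\cap D_1)\subseteq B'\cap(D\rtimes_{\beta}H^0)$ explicitly and declares that this suffices, whereas you also spell out the reverse inclusion using surjectivity and injectivity of $\Psi$ — equivalently, applying the same commutation argument to $\Psi^{-1}$, which likewise fixes $B$ pointwise.
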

\begin{proof} It suffices to show that $\Psi(d)\in B' \cap (D\rtimes_{\beta}H^0 )$ for any $d\in B' \cap D_1$.
For any $d\in B' \cap D_1$, $b\in B$,
$$
\Psi(d)b=\Psi(d)\Psi(b)=\Psi(db)=\Psi(b)\Psi(d)=b\Psi(d) .
$$
Hence $\Psi(d)\in B' \cap(D\rtimes_{\beta}H^0 )$ for any $d\in B' \cap D_1$.
\end{proof}

By Lemma \cite [Lemma 5.8]{KT5:Hopf}, $B' \cap D_1 =1\rtimes_{\sigma, v}1\rtimes_{\widehat{\sigma}}H^0$.
Also, we have the next lemma.

\begin{lemma}\label{lem:commutant} With the above notation, $B' \cap (D\rtimes_{\beta}H^0 )=
1\rtimes_{\sigma, v}1\rtimes_{\beta}H^0$.
\end{lemma}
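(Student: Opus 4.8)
The plan is to establish the nontrivial inclusion $1\rtimes_{\sigma,v}1\rtimes_\beta H^0\subseteq B'\cap(D\rtimes_\beta H^0)$ by showing that the coaction $\beta$ fixes $B$, and then to obtain equality by a dimension count through the isomorphism $\Psi$ together with Lemma~\ref{lem:restriction} and \cite[Lemma 5.8]{KT5:Hopf}.

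First I would show that every element of $X$ is $\mu$-fixed. By Lemma~\ref{lem:action}, $\tau\cdot_\mu y=E^X(y)$ for all $y\in Y$; since $E^X$ is the projection of $Y$ onto $X$, this gives $\tau\cdot_\mu y=y$ whenever $y\in X$. As $\tau$ is the Haar integral on $H$, the map $y\mapsto\tau\cdot_\mu y$ is the standard averaging projection of the coaction $\mu$ onto its fixed-point space, so $\tau\cdot_\mu y=y$ forces $\mu(y)=y\otimes1$, and hence $\psi\cdot_\mu y=\epsilon^0(\psi)y$ for all $\psi\in H^0$ and $y\in X$.

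Next I would feed this into the defining formula for $\cdot_\beta$. Since $X$ is an $A$-$B$-equivalence bimodule, $B$ is the closed linear span of $\{\la y,z\ra_D\mid y,z\in X\}$. For $y,z\in X$ and $\psi\in H^0$,
\begin{align*}
\psi\cdot_\beta\la y,z\ra_D
&=\la S^0(\psi_{(1)}^*)\cdot_\mu y\,,\,\psi_{(2)}\cdot_\mu z\ra_D \\
&=\overline{\epsilon^0(S^0(\psi_{(1)}^*))}\,\epsilon^0(\psi_{(2)})\,\la y,z\ra_D \\
&=\epsilon^0(\psi_{(1)})\,\epsilon^0(\psi_{(2)})\,\la y,z\ra_D
=\epsilon^0(\psi)\la y,z\ra_D ,
\end{align*}
using that $\epsilon^0$ is a $*$-homomorphism satisfying $\epsilon^0\circ S^0=\epsilon^0$ and the counit identity $\epsilon^0(\psi_{(1)})\epsilon^0(\psi_{(2)})=\epsilon^0(\psi)$. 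By continuity of $\beta$ and since $H^0$ separates the points of $H$, it follows that $\beta(b)=b\otimes1$ for every $b\in B$; equivalently each $1\rtimes_\beta\psi$ commutes with $B$, so $1\rtimes_{\sigma,v}1\rtimes_\beta H^0\subseteq B'\cap(D\rtimes_\beta H^0)$.

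For the reverse inclusion I would pass through $\Psi$. By Lemma~\ref{lem:restriction}, $\Psi$ restricts to a linear isomorphism of $B'\cap D_1$ onto $B'\cap(D\rtimes_\beta H^0)$, and by \cite[Lemma 5.8]{KT5:Hopf}, $B'\cap D_1=1\rtimes_{\sigma,v}1\rtimes_{\widehat\sigma}H^0$, which is isomorphic to $H^0$ and hence of dimension $N$. Therefore $B'\cap(D\rtimes_\beta H^0)$ has dimension $N$ as well. On the other hand $1\rtimes_{\sigma,v}1\rtimes_\beta H^0$ is the image of the injective linear map $\psi\mapsto1\rtimes_\beta\psi$, so it is also $N$-dimensional; being contained in the $N$-dimensional space $B'\cap(D\rtimes_\beta H^0)$, it must equal it. The main obstacle is the fixed-point step: the crucial input is the identification of $X$ with the $\mu$-fixed vectors furnished by Lemma~\ref{lem:action}, after which both the inclusion and the dimension count are routine.
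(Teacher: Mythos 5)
Your proof is correct, but the route to the equality is genuinely different from the paper's. For the first half you both establish the same key fact — that the induced $H^0$-action $\cdot_\beta$ is trivial on $B$ — though you derive the triviality of $\mu$ on $X$ from Lemma \ref{lem:action} together with the standard Haar-projection argument ($\psi\tau=\epsilon^0(\psi)\tau$ forces $\psi\cdot_\mu y=\epsilon^0(\psi)y$ once $\tau\cdot_\mu y=y$), whereas the paper simply cites \cite[Lemma 3.2]{KT5:Hopf} for $\psi\cdot_\mu x=\epsilon^0(\psi)x$; both are fine, and your inner-product computation giving $\psi\cdot_\beta\la y,z\ra_D=\epsilon^0(\psi)\la y,z\ra_D$ is exactly the needed step. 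Where you diverge is the conclusion: the paper finishes by saying the commutant computation is carried out ``in the same way as in the proof of \cite[Lemma 5.8]{KT5:Hopf},'' i.e.\ it repeats the direct commutant argument from that external paper, while you avoid reproducing that argument entirely by a dimension count — Lemma \ref{lem:restriction} makes $\Psi|_{B'\cap D_1}$ a linear isomorphism onto $B'\cap(D\rtimes_\beta H^0)$, the cited identity $B'\cap D_1=1\rtimes_{\sigma,v}1\rtimes_{\widehat\sigma}H^0$ gives dimension $N$, and your established inclusion of the $N$-dimensional subspace $1\rtimes_{\sigma,v}1\rtimes_\beta H^0$ then forces equality. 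Your version is more self-contained relative to this paper (it uses only the statements of Lemma \ref{lem:restriction} and \cite[Lemma 5.8]{KT5:Hopf}, not the proof of the latter), at the small cost of invoking finite-dimensionality of the commutant and injectivity of $\psi\mapsto 1\rtimes_\beta\psi$, both of which hold here since the crossed product is $D\otimes H^0$ as a vector space; the paper's approach works without these observations but requires the reader to adapt an argument from another paper.
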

\begin{proof} We note that $\psi\cdot_{\mu}x=\epsilon^0 (\psi)x$ for any $\psi\in H^0$, $x\in X$
by \cite [Lemma 3.2]{KT5:Hopf}. Thus by the definition of $\beta$, $\psi\cdot_{\beta}(b\rtimes_{\sigma, v}1)
=\epsilon^0 (\psi)(b\rtimes_{\sigma, v}1 )$ for any $\psi\in H^0$, $b\in B$ (See \cite [Sectoin 4]{KT5:Hopf}).
Hence in the same way as in the proof of \cite [Lemma 5.8]{KT5:Hopf}, we obtain the conclusion.
\end{proof}

Since $\Psi(1\rtimes_{\widehat{\sigma}}\tau)=1\rtimes_{\beta}\tau$ by Lemma \ref{lem:projection} and
$\Psi(d)=d$ for any $d\in D$, in the same way as in \cite [Lemma 5.6]{KT5:Hopf}, we can
see that there is an isomorphism $\widehat{\Psi}$ of $D_2$ onto
$D\rtimes_{\beta}H^0 \rtimes_{\widehat{\beta}}H$ satisfying that
$$
\widehat{\Psi}|_{D_1}=\Psi, \quad E_2^{\beta}\circ\widehat{\Psi}=\Psi\circ E_3^{\sigma, v}, \quad
\widehat{\Psi}(1_D \rtimes_{\widehat{\sigma}}1^0 \rtimes_{\widehat{\widehat{\sigma}}}e)
=1_D \rtimes_{\beta}1^0 \rtimes_{\widehat{\beta}}e ,
$$
where $\widehat{\widehat{\sigma}}$ is the second dual coaction of $(\sigma, v)$,
$\widehat{\beta}$ is the dual coaction of $\beta$, $D_2 =D_1 \rtimes_{\widehat{\widehat{\sigma}}}H$ and
$E_3^{\sigma, v}$ and $E_2^{\beta}$ are the canonical conditional expectations from
$D_2$ and $D\rtimes_{\beta}H^0 \rtimes_{\widehat{\beta}}H$ onto $D_1$ and $D\rtimes_{\beta}H^0$,
respectively. Furthermore, in the same way as in the above or \cite [Section 5]{KT5:Hopf},
$\widehat{\Psi}|_{D' \cap D_2}$ is an isomorphism of $D' \cap D_2$ onto
$D' \cap(D\rtimes_{\beta}H^0 \rtimes_{\widehat{\beta}}H)$. Since
$$
B' \cap D_1 =B' \cap (D\rtimes_{\beta}H^0 )
=1\rtimes_{\sigma, v}1\rtimes_{\beta}H^0
$$
by Lemma \ref{lem:commutant}, we identify $B' \cap D_1$ and $B' \cap (D\rtimes_{\beta}H^0 )$
with $H^0$. Let $f^0 =\Psi|_{B' \cap D_1}$ and we regard $f^0$ as a $C^*$-algebra
automorphism of $H^0$. By the proof of \cite [Lemma 5.9]{KT5:Hopf}, we can see that
\begin{align*}
& N^2 (E_2^{\sigma, v}\circ E_3^{\sigma, v})((1\rtimes_{\widehat{\sigma}}
\psi\rtimes_{\widehat{\widehat{\sigma}}}1)(1\rtimes_{\widehat{\sigma}}1^0
\rtimes_{\widehat{\widehat{\sigma}}}e)
(1\rtimes_{\widehat{\sigma}}\tau\rtimes_{\widehat{\widehat{\sigma}}}1)
(1\rtimes_{\widehat{\sigma}}1^0 \rtimes_{\widehat{\widehat{\sigma}}}h))=\psi(e) , \\
& N^2 (E_1^{\beta}\circ E_2^{\beta})((1\rtimes_{\beta}
\psi\rtimes_{\widehat{\beta}}1)(1\rtimes_{\beta}1^0
\rtimes_{\widehat{\beta}}e)
(1\rtimes_{\beta}\tau\rtimes_{\widehat{\beta}}1)
(1\rtimes_{\beta}1^0 \rtimes_{\widehat{\beta}}h))=\psi(e) , 
\end{align*}
for any $h\in H$, $\psi\in H^0$. Hence in the same way as in the proof of
\cite [Lemma 5.9]{KT5:Hopf}, we can see that $f^0$ is a $C^*$-Hopf algebra automorphism of $H^0$.

\begin{lemma}\label{lem:equation} With the above notation, $\widehat{\beta}\circ\Psi
=(\Psi\otimes f^0 )\circ\widehat{\widehat{\sigma}}$.
\end{lemma}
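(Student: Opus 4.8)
The plan is to verify the asserted identity of $*$-homomorphisms by checking it on a generating set of $D_1$. Both $\widehat{\beta}\circ\Psi$ and $(\Psi\otimes f^0)\circ\widehat{\widehat{\sigma}}$ are compositions of $*$-homomorphisms, namely the coactions $\widehat{\beta}$ and $\widehat{\widehat{\sigma}}$ together with the isomorphisms $\Psi$ and $\Psi\otimes f^0$; hence each is itself a $*$-homomorphism from $D_1$ into $(D\rtimes_\beta H^0)\otimes H^0$. Since $D_1 = D\rtimes_{\widehat{\sigma}} H^0$ is generated as a $C^*$-algebra by $D$ together with $1\rtimes_{\widehat{\sigma}} H^0 = B'\cap D_1$, and since both maps are continuous and multiplicative, it suffices to check that they agree on $d\in D$ and on the elements $1\rtimes_{\widehat{\sigma}}\psi$ for $\psi\in H^0$.

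First I would treat the elements of $D$. Here $\Psi(d)=d$, and the dual-coaction formula gives $\widehat{\beta}(d)=\widehat{\beta}(d\rtimes_\beta 1^0)=d\otimes 1^0$ because $\Delta^0(1^0)=1^0\otimes 1^0$; thus the left-hand side produces $d\otimes 1^0$. On the other side $\widehat{\widehat{\sigma}}(d)=d\otimes 1^0$ for the same reason, and applying $\Psi\otimes f^0$ yields $\Psi(d)\otimes f^0(1^0)=d\otimes 1^0$, since $f^0$ fixes the unit. So the two maps coincide on $D$.

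The substantive computation is on $B'\cap D_1$. Using Lemma \ref{lem:commutant} and Lemma \ref{lem:restriction} I would identify $B'\cap D_1$ and $B'\cap(D\rtimes_\beta H^0)$ with $H^0$ via $\psi\mapsto 1\rtimes_{\widehat{\sigma}}\psi$ and $\psi\mapsto 1\rtimes_\beta\psi$, so that by the definition of $f^0$ one has $\Psi(1\rtimes_{\widehat{\sigma}}\psi)=1\rtimes_\beta f^0(\psi)$. Applying the dual-coaction formula $\widehat{\beta}(1\rtimes_\beta\phi)=(1\rtimes_\beta\phi_{(1)})\otimes\phi_{(2)}$ with $\phi=f^0(\psi)$ gives, for the left-hand side, $(1\rtimes_\beta f^0(\psi)_{(1)})\otimes f^0(\psi)_{(2)}$. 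For the right-hand side, $\widehat{\widehat{\sigma}}(1\rtimes_{\widehat{\sigma}}\psi)=(1\rtimes_{\widehat{\sigma}}\psi_{(1)})\otimes\psi_{(2)}$, and applying $\Psi\otimes f^0$ yields $(1\rtimes_\beta f^0(\psi_{(1)}))\otimes f^0(\psi_{(2)})$.

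These two expressions agree precisely because $f^0$ is a $C^*$-\textit{Hopf} algebra automorphism, hence intertwines the comultiplication, $\Delta^0\circ f^0=(f^0\otimes f^0)\circ\Delta^0$, so that $f^0(\psi)_{(1)}\otimes f^0(\psi)_{(2)}=f^0(\psi_{(1)})\otimes f^0(\psi_{(2)})$. This is the only genuinely essential point, and it is exactly what the preceding discussion arranged by showing that $f^0$ is comultiplicative rather than a mere $C^*$-algebra automorphism; the remainder is bookkeeping with the dual-coaction formulas. Having matched the two generating sets, multiplicativity and continuity of the two $*$-homomorphisms then force them to coincide on all of $D_1$, which completes the proof.
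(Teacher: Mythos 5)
Your proof is correct: you verify the identity on the two generating families of $D_1 = D\rtimes_{\widehat{\sigma}}H^0$, namely $D$ and $B'\cap D_1 = 1\rtimes_{\widehat{\sigma}}H^0$, using $\Psi(d)=d$, the defining relation $\Psi(1\rtimes_{\widehat{\sigma}}\psi)=1\rtimes_{\beta}f^0(\psi)$, the dual-coaction formulas, and the comultiplicativity of $f^0$, which the paper establishes (via the analogue of \cite[Lemma 5.9]{KT5:Hopf}) \emph{before} this lemma, so there is no circularity. This is essentially the same approach as the paper, whose proof is a citation of \cite[Lemma 5.10]{KT5:Hopf}; that argument is precisely this kind of check on the elements $d\rtimes_{\widehat{\sigma}}\psi$, so your write-up simply supplies the details the paper leaves to the reference.
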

\begin{proof} This can be proved in the same way as in the proof of \cite [Lemma 5.10]{KT5:Hopf}.
\end{proof}

\begin{lemma}\label{lem:equivalent} With the above notation, $\widehat{\beta}(1_D \rtimes_{\beta}\tau)$ is
Murray-von Neumann equivalent to $(1_D \rtimes_{\beta}\tau)\otimes 1^0$ in
$(D\rtimes_{\beta}H^0 )\otimes H^0$.
\end{lemma}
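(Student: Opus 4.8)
The plan is to exhibit an explicit partial isometry in $(D\rtimes_{\beta}H^0)\otimes H^0$ that implements the Murray--von Neumann equivalence between $\widehat{\beta}(1_D\rtimes_{\beta}\tau)$ and $(1_D\rtimes_{\beta}\tau)\otimes 1^0$. First I would compute $\widehat{\beta}(1_D\rtimes_{\beta}\tau)$ explicitly using the formula for the dual coaction $\widehat{\beta}$ on $D\rtimes_{\beta}H^0$. Since $\tau$ is the Haar trace on $H$ (equivalently the distinguished projection in $H^0$), I would expand $\tau$ in terms of the comatrix-unit basis via $e=\frac{1}{N}\sum_{j,k}d_k w_{jj}^k$ and the dual relations, so that $\widehat{\beta}(1_D\rtimes_{\beta}\tau)$ becomes a concrete sum over $\Lambda$ of elementary tensors of the form $(1_D\rtimes_{\beta}\psi)\otimes\psi'$. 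The projection $(1_D\rtimes_{\beta}\tau)\otimes 1^0$ is already in simple form, so the task reduces to matching these two projections.

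Next I would construct the implementing element. The natural candidate is built from the same comatrix units $\{w_{ij}^k\}$ that furnish the quasi-basis $\{(W_I^{\rho*},W_I^{\rho})\}$ for $E_1^{\rho,u}$, now transported to the $\beta$-picture; concretely I would set $V=\sum_{I\in\Lambda}(1_D\rtimes_{\beta}e_I)\otimes \psi_I$ for suitable elements $e_I\in H^0$ and $\psi_I\in H^0$ coming from the coproduct $\Delta^0$ of the central projection underlying $\tau$. Then I would verify $V^*V=\widehat{\beta}(1_D\rtimes_{\beta}\tau)$ and $VV^*=(1_D\rtimes_{\beta}\tau)\otimes 1^0$ by direct computation, using the counit and antipode identities $\epsilon^0$, $S^0$ together with the comultiplication rule $\Delta^0$ applied to $\tau$. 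The key algebraic input is that $\tau$, being a group-like-type Haar projection, splits under $\Delta^0$ in a way that makes these two products collapse to the respective projections; this is exactly the sort of computation carried out for the analogous statement in \cite{KT5:Hopf}.

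The main obstacle will be correctly tracking the coaction $\widehat{\beta}$ through the definition of $\beta$ itself, since $\beta$ is only specified implicitly via $\psi\cdot_{\beta}\la y,z\ra_D=\la S^0(\psi_{(1)}^*)\cdot_{\mu}y,\psi_{(2)}\cdot_{\mu}z\ra_D$ and its action on $B$ is trivial by Lemma \ref{lem:commutant}. I would therefore first establish, as an auxiliary step, a usable formula for $\widehat{\beta}(1_D\rtimes_{\beta}\psi)$ for arbitrary $\psi\in H^0$, expressing it as $(1_D\rtimes_{\beta}\psi_{(1)})\otimes\psi_{(2)}$ in Sweedler notation; once this is in hand the verification becomes a routine application of the Hopf-algebra identities to $\psi=\tau$. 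Alternatively, and perhaps more cleanly, I would invoke Lemma \ref{lem:equation}, namely $\widehat{\beta}\circ\Psi=(\Psi\otimes f^0)\circ\widehat{\widehat{\sigma}}$, to pull the problem back to the $D_1$-side: since $\Psi(1\rtimes_{\widehat{\sigma}}\tau)=1\rtimes_{\beta}\tau$ by Lemma \ref{lem:projection}, it suffices to prove the corresponding Murray--von Neumann equivalence for $\widehat{\widehat{\sigma}}(1\rtimes_{\widehat{\sigma}}\tau)$ and $(1\rtimes_{\widehat{\sigma}}\tau)\otimes 1^0$ in $D_1\otimes H^0$, where the second dual coaction $\widehat{\widehat{\sigma}}$ is standard and the equivalence is a known feature of iterated crossed products by finite-dimensional $C^*$-Hopf algebras. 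Applying $\Psi\otimes\id$ (and absorbing the Hopf-algebra automorphism $f^0$, which preserves $\tau$) then transports the equivalence to the $\beta$-picture, yielding the conclusion.
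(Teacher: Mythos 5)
Your second, preferred route is exactly the paper's proof: Lemmas \ref{lem:projection} and \ref{lem:equation} give $\widehat{\beta}(1_D \rtimes_{\beta}\tau)=(\Psi\otimes f^0 )(\widehat{\widehat{\sigma}}(1_D \rtimes_{\widehat{\sigma}}\tau))$, the Murray--von Neumann equivalence of $\widehat{\widehat{\sigma}}(1_D \rtimes_{\widehat{\sigma}}\tau)$ with $(1_D \rtimes_{\widehat{\sigma}}\tau)\otimes 1^0$ in $D_1 \otimes H^0$ is quoted from \cite[Proposition 3.19]{KT1:inclusion}, and the isomorphism $\Psi\otimes f^0$ transports it back, where the only fact needed about $f^0$ is $f^0 (1^0 )=1^0$ (unitality), not that it fixes $\tau$. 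The explicit partial-isometry construction sketched in your first two paragraphs is unnecessary and can be dropped.
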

\begin{proof} By Lemmas \ref{lem:projection}, \ref{lem:equation},
$$
\widehat{\beta}(1_D \rtimes_{\beta}\tau)=\widehat{\beta}(\Psi(1_D \rtimes_{\widehat{\sigma}}\tau))
=(\Psi\otimes f^0 )(\widehat{\widehat{\sigma}}(1_D \rtimes_{\widehat{\sigma}}\tau)) .
$$
By \cite [Proposition 3.19]{KT1:inclusion}, $\widehat{\widehat{\sigma}}(1_D\rtimes_{\widehat{\sigma}}\tau)$
is Murray-von Neumann equivalent to $(1\rtimes_{\widehat{\sigma}}\tau)\otimes 1^0$ in
$D_1 \otimes H^0$. Hence we obtain the conclusion by Lemma \ref{lem:projection}.
\end{proof}

\begin{lemma}\label{lem:saturated} With the above notation, $\beta$ is saturated, that is,
the action of $H$ on $D$ induced by $\beta$ is saturated in the sense of 
Szyma\'nski and Peligrad \cite{SP:saturated}.
\end{lemma}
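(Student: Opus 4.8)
The plan is to obtain saturation of $\beta$ as an immediate consequence of the Murray--von Neumann equivalence recorded in Lemma \ref{lem:equivalent}. The point is that, for a coaction $\gamma$ of $H$ on $D$, saturation in the sense of Szyma\'nski and Peligrad \cite{SP:saturated} is encoded in the behaviour of the canonical Jones projection $1_D \rtimes_\gamma \tau \in D \rtimes_\gamma H^0$ under the dual coaction $\widehat\gamma$: the coaction is saturated exactly when $\widehat\gamma(1_D \rtimes_\gamma \tau)$ is Murray--von Neumann equivalent to $(1_D \rtimes_\gamma \tau)\otimes 1^0$ in $(D \rtimes_\gamma H^0)\otimes H^0$. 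This is the same criterion that, applied to the dual coaction $\widehat\sigma$, produced the equivalence invoked inside the proof of Lemma \ref{lem:equivalent} via \cite[Proposition 3.19]{KT1:inclusion}.

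First I would specialize this criterion to the coaction $\beta$, so that the assertion to be proved collapses to the single relation
$$
\widehat\beta(1_D \rtimes_\beta \tau)\sim (1_D \rtimes_\beta \tau)\otimes 1^0 \quad\text{in } (D\rtimes_\beta H^0)\otimes H^0 .
$$
But this relation is precisely Lemma \ref{lem:equivalent}. Hence $\beta$ meets the Szyma\'nski--Peligrad criterion and is therefore saturated, and nothing further is required beyond citing that criterion.

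The one place that demands care is the logical direction in which the criterion is used. In the proof of Lemma \ref{lem:equivalent} the implication ran ``saturated $\Rightarrow$ projection equivalence,'' exploiting that a dual coaction such as $\widehat\sigma$ is automatically saturated; here the converse implication ``projection equivalence $\Rightarrow$ saturated'' is needed. If \cite[Proposition 3.19]{KT1:inclusion} is formulated as an equivalence, this is immediate. Should only the forward implication be available there, the remaining work is to supply the converse directly from the definition in \cite{SP:saturated}: the partial isometry implementing the equivalence of Lemma \ref{lem:equivalent}, expanded against the comatrix units $\{w_{ij}^k\}$, yields a finite family in $D$ witnessing saturation in the sense of \cite[Theorem 2.2]{SP:saturated}. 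I expect this translation to be the only non-routine step, and it involves no computation beyond rewriting Lemma \ref{lem:equivalent} in that language.
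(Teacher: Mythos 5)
Your proposal has a genuine gap: it rests on the claim that saturation of $\beta$ is \emph{characterized} by the Murray--von Neumann equivalence $\widehat{\beta}(1_D\rtimes_{\beta}\tau)\sim(1_D\rtimes_{\beta}\tau)\otimes 1^0$, and that this criterion can simply be cited. No such characterization exists in the sources you invoke. As used in this paper, \cite[Proposition 3.19]{KT1:inclusion} is the one-way statement that the \emph{second dual coaction of a twisted coaction} moves the Jones projection to a projection equivalent to $(1\rtimes\tau)\otimes 1^0$; it is a property of dual coactions, not a saturation criterion, so your primary route (``if it is formulated as an equivalence, this is immediate'') is not available. Worse, the asserted equivalence is false in general: for the order-two automorphism $\Ad(\mathrm{diag}(1,1,-1))$ of $M_3(\BC)$, viewed as a $\BZ_2$-(co)action, the Jones projection is full in the crossed product $M_3(\BC)\rtimes\BZ_2\cong M_3(\BC)\oplus M_3(\BC)$ (components of ranks $2$ and $1$), so the action is saturated, yet $\widehat{\beta}(e)$ and $e\otimes 1^0$ have block ranks $(2,1,1,2)$ versus $(2,2,1,1)$ and are not equivalent. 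This is consistent with how both \cite{KT1:inclusion} and the present paper operate: saturation (Lemma \ref{lem:saturated}) and the projection equivalence (Lemma \ref{lem:equivalent}) are invoked as two \emph{independent} hypotheses immediately after this lemma in order to apply \cite[Propositions 6.1, 6.4 and Theorem 6.4]{KT1:inclusion}. You also misdescribe the proof of Lemma \ref{lem:equivalent}: it never uses saturation of $\widehat{\sigma}$; it combines the intertwining relation of Lemma \ref{lem:equation} with Lemma \ref{lem:projection} and \cite[Proposition 3.19]{KT1:inclusion}.

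The implication you actually need, ``projection equivalence $\Rightarrow$ saturation,'' does happen to be true, but your fallback sketch (``expand the partial isometry against the comatrix units; nothing beyond rewriting'') does not prove it; a correct argument runs: equivalent projections generate the same closed two-sided ideal, so $\widehat{\beta}(1\rtimes_{\beta}\tau)$ lies in $I\otimes H^0$, where $I$ is the closed ideal of $D\rtimes_{\beta}H^0$ generated by $1\rtimes_{\beta}\tau$; slicing with $h\in H$ gives $1\rtimes_{\beta}(\id\otimes h)(\Delta^0(\tau))\in I$; since $(\id\otimes h)(\Delta^0(\tau))=\tau(\cdot\, h)$ and the Haar trace $\tau$ of $H$ is faithful, $h\mapsto(\id\otimes h)(\Delta^0(\tau))$ is a linear bijection of $H$ onto $H^0$, whence $1\rtimes_{\beta}1^0\in I$ and $I=D\rtimes_{\beta}H^0$. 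The paper needs none of this: its proof never touches Lemma \ref{lem:equivalent}, but instead notes that $\overline{D_1(1_D\rtimes_{\widehat{\sigma}}\tau)D_1}=D_1$ (the crossed product by the dual coaction is the basic construction, so its Jones projection is automatically full) and transports this fullness through the isomorphism $\Psi$ using Lemma \ref{lem:projection}, i.e., $\Psi(1\rtimes_{\widehat{\sigma}}\tau)=1\rtimes_{\beta}\tau$. So even after repair your argument is a genuinely longer detour; as written, its key step is unproved and misattributed.
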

\begin{proof} By the definition of $\widehat{\sigma}$,
$$
\overline{D_1 (1_D \rtimes_{\widehat{\sigma}}\tau)D_1 }=D_1 .
$$
Since $\Psi$ is an isomorphism of $D_1$ onto $D\rtimes_{\beta}H^0 $,
$$
\overline{(D\rtimes_{\beta}H^0 )(1_D\rtimes_{\beta}\tau)(D\rtimes_{\beta}H^0 )}
=\Psi(\overline{D_1 (1_D \rtimes_{\widehat{\sigma}}\tau)D_1 })=\Psi(D_1)=D\rtimes_{\beta}H^0
$$
by Lemma \ref{lem:action}.
Hence $\beta$ is saturated.
\end{proof}

Since $\beta$ is saturated by Lemma \ref{lem:saturated}, there is the conditional expectation
$E^{D^{\beta}}$ from $D$ onto $D^{\beta}$ defined by
$$
E^{D^{\beta}}(d)=\tau\cdot_{\beta} d
$$
for any $d\in D$ (See \cite [Proposition 2.12]{SP:saturated}), where $D^{\beta}$ is the fixed-point
$C^*$-subalgebra of $D$ for $\beta$. Also, since $\widehat{\beta}(1\rtimes_{\beta}\tau)$ is
Murray-von Neumann equivalent to $(1\rtimes_{\beta}\tau)\otimes 1^0$ in
$(D\rtimes_{\beta}H^0 )\otimes H^0$ by Lemma \ref{lem:equivalent}, there is a twisted coaction
$(\gamma, w)$ of $H^0$ on $D^{\beta}$ and an isomorphism $\pi_D$ of $D$ onto
$D^{\beta}\rtimes_{\gamma, w}H$ satisfying
$$
E_1^{\sigma, v}=E^{D^{\beta}}\circ\pi_D, \quad
\psi\cdot_{\widehat{\gamma}}\pi_D (d)=\pi_D (\psi\cdot_{\beta}d)
$$
for any $d\in D$, $\psi\in H^0$ by \cite [Proposition 6.1, 6.4 and Theorem 6.4]{KT1:inclusion}.
We identify $D^{\beta}\rtimes_{\gamma, w}H$ and $E_1^{\gamma, w}$ with $D$ and $E^{D^{\beta}}$
by the above isomorphism $\pi_D$, respectively. We show that $B=D^{\beta}$. By the definition of $\beta$,
$B\subset D^{\beta}$. Let $F$ be the conditional expectation of $D^{\beta}$ onto $B$
defined by $F=E_1^{\sigma, v}|_{D^{\beta}}$, the restriction of $E_1^{\sigma, v}$ to $D^{\beta}$.
Since $E_1^{\sigma, v}$ is of Watatani index-finite type, there is a quasi-basis $\{(d_i , d_i ^* )\}_{i=1}^n$
for $E_1^{\sigma, v}$. Then $F\circ E_1^{\gamma, w}$ is also a conditional expectation from
$D$ onto $B$. Since $B' \cap D=\BC1$, by \cite [Proposition 1.4.1]{Watatani:index},
$$
E_1^{\sigma, v}=F\circ E_1^{\gamma, w} .
$$

\begin{lemma}\label{lem:index} With above notation, $F$ is of Watatani index-finite type
and its Watatani index, $\Ind_W (F)\in \BC1$.
\end{lemma}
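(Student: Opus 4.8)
The plan is to manufacture an explicit quasi-basis for $F$ out of the given quasi-basis for $E_1^{\sigma, v}$ together with the canonical conditional expectation $E_1^{\gamma, w}$ from $D$ onto $D^{\beta}$, and then to read off the index from the commutant hypothesis. Recall that $\{(d_i , d_i^* )\}_{i=1}^n$ is a quasi-basis for $E_1^{\sigma, v}$ and that $E_1^{\sigma, v}=F\circ E_1^{\gamma, w}$. I would set $b_i =E_1^{\gamma, w}(d_i )\in D^{\beta}$, so that $b_i^* =E_1^{\gamma, w}(d_i^* )$ since $E_1^{\gamma, w}$ is $*$-preserving, and claim that $\{(b_i , b_i^* )\}_{i=1}^n$ is a quasi-basis for $F$.

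First I would verify the two defining identities of a quasi-basis. For any $x\in D^{\beta}$, starting from $x=\sum_i d_i E_1^{\sigma, v}(d_i^* x)$ and using $E_1^{\sigma, v}=F\circ E_1^{\gamma, w}$ together with the $D^{\beta}$-bimodule property $E_1^{\gamma, w}(d_i^* x)=E_1^{\gamma, w}(d_i^* )\,x$ (valid because $x\in D^{\beta}$), I obtain $x=\sum_i d_i F(b_i^* x)$. Since $F(b_i^* x)\in B\subset D^{\beta}$, applying $E_1^{\gamma, w}$ to both sides and pulling $F(b_i^* x)$ out of $E_1^{\gamma, w}$ gives $x=\sum_i b_i F(b_i^* x)$. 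The symmetric identity $x=\sum_i F(x b_i )b_i^*$ follows in the same way from $x=\sum_i E_1^{\sigma, v}(x d_i )d_i^*$. Hence $F$ is of Watatani index-finite type with $\Ind_W (F)=\sum_i b_i b_i^*$.

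It then remains to see that this index is a scalar. By Watatani's theory $\Ind_W (F)$ is a central element of $D^{\beta}$ independent of the choice of quasi-basis. Since $B\subset D^{\beta}$, every element of $Z(D^{\beta})$ commutes with $B$, so $Z(D^{\beta})\subset B' \cap D^{\beta}\subset B' \cap D$. But $B' \cap D=\BC1$ (as recorded above, via \cite [Lemma 10.3]{KT4:morita} from the hypothesis $A' \cap C=\BC1$), whence $Z(D^{\beta})=\BC1$ and therefore $\Ind_W (F)\in\BC1$.

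The only point requiring genuine care is the bimodule bookkeeping in the second paragraph: one must track which factors lie in $D^{\beta}$ so that they may legitimately be pushed through $E_1^{\gamma, w}$, and one must confirm that both the left and the right quasi-basis identities hold. Everything else is a formal consequence of the factorization $E_1^{\sigma, v}=F\circ E_1^{\gamma, w}$ and of $B' \cap D=\BC1$.
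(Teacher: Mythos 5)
Your proposal is correct and follows essentially the same route as the paper: both take $\{(E_1^{\gamma,w}(d_i),E_1^{\gamma,w}(d_i^*))\}_{i=1}^n$ as the quasi-basis for $F$, verify it using the factorization $E_1^{\sigma,v}=F\circ E_1^{\gamma,w}$ together with the $D^{\beta}$-bimodule property of $E_1^{\gamma,w}$, and then conclude $\Ind_W(F)\in\BC 1$ from the centrality of the Watatani index in $D^{\beta}$ combined with $Z(D^{\beta})\subset B'\cap D=\BC 1$. The only cosmetic difference is that you derive the quasi-basis identity by applying $E_1^{\gamma,w}$ to the identity in $D$, while the paper expands $\sum_i E_1^{\gamma,w}(d_i)F(E_1^{\gamma,w}(d_i^*)d)$ directly; the ingredients are identical.
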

\begin{proof} We claim that $\{(E_1^{\gamma, w}(d_i )\, , \, E_1^{\gamma, w}(d_i^* ))\}_{i=1}^n$
is a qusi-basis for $F$. Indeed, for any $d\in D^{\beta}$,
\begin{align*}
\sum_{i=1}^n E_1^{\gamma, w}(d_i )F(E_1^{\gamma, w}(d_i^* )d) & =
\sum_{i=1}^n E_1^{\gamma, w}(d_i )F(E_1^{\gamma, w}(d_i^* )E_1^{\gamma , w}(d)) \\
& =\sum_{i=1}^n E_1^{\gamma, w}(d_i )(F\circ E_1^{\gamma, w})(d_i^* E_1^{\gamma, w}(d)) \\
& =\sum_{i=1}^n E_1^{\gamma, w}(d_i )E_1^{\sigma, v}(d_i^* E_1^{\gamma, w}(d)) \\
& =\sum_{i=1}^n E_1^{\gamma, w}(d_i E_1^{\sigma, v}(d_i^* E_1^{\gamma, w}(d))) \\
& =E_1^{\gamma, w}(E_1^{\gamma, w}(d))=d
\end{align*}
since $E_1^{\sigma, v}=F\circ E_1^{\gamma, w}$ and $E_1^{\gamma, w}(d)=d$ for any $d\in D^{\beta}$.
Hence $F$ is of Watatani index-finite type. Also, $\Ind_W (F)\in (D^{\beta})'\cap D^{\beta} \subset
B' \cap D=\BC 1$ by \cite [Proposition 1.2.8]{Watatani:index}.
\end{proof}

\begin{lemma}\label{lem:fix} With the above notation, $B=D^{\beta}$.
\end{lemma}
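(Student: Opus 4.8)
The plan is to deduce $B=D^{\beta}$ from the facts already in hand, namely that $B\subset D^{\beta}$ (by the definition of $\beta$) and that the restriction $F=E_1^{\sigma,v}|_{D^{\beta}}$ is a conditional expectation of $D^{\beta}$ onto $B$ of Watatani index-finite type whose index lies in $\BC 1$ (Lemma \ref{lem:index}). Since $B\subset D^{\beta}$ already, it suffices to prove that $F$ is the identity map, and for this I would show that $\Ind_W(F)=1$.

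To compute $\Ind_W(F)$ I would use the tower $B\subset D^{\beta}\subset D$ together with the relation $E_1^{\sigma,v}=F\circ E_1^{\gamma,w}$ established just before the statement. Applying the multiplicativity of the Watatani index for a composition of conditional expectations (Watatani \cite{Watatani:index}, Proposition 1.7.1) gives $\Ind_W(E_1^{\sigma,v})=E_1^{\gamma,w}(\Ind_W(F))\,\Ind_W(E_1^{\gamma,w})$; because $\Ind_W(F)\in\BC 1$ is a scalar it is fixed by $E_1^{\gamma,w}$, so this reduces to $\Ind_W(E_1^{\sigma,v})=\Ind_W(F)\,\Ind_W(E_1^{\gamma,w})$. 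Now $E_1^{\sigma,v}$ and $E_1^{\gamma,w}$ are the canonical conditional expectations of the twisted crossed products $D=B\rtimes_{\sigma,v}H$ and $D=D^{\beta}\rtimes_{\gamma,w}H$, so each carries a quasi-basis of the form $\{(W_I^{*},W_I)\}_{I\in\Lambda}$ and hence $\Ind_W(E_1^{\sigma,v})=\Ind_W(E_1^{\gamma,w})=N1$, where $N=\dim H$. Substituting yields $N1=\Ind_W(F)\,N1$, whence $\Ind_W(F)=1$.

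It remains to turn $\Ind_W(F)=1$ into $B=D^{\beta}$, which is the step I expect to require the most care. Here I would invoke the Pimsner--Popa type inequality for an index-finite conditional expectation, namely $F(x)\geq\Ind_W(F)^{-1}x$ for every positive $x\in D^{\beta}$ (Watatani \cite{Watatani:index}); with $\Ind_W(F)=1$ this reads $F(x)\geq x$, so $F(x)-x\geq 0$. Since $F(x)\in B$ we have $F(F(x)-x)=F(x)-F(x)=0$, and as a conditional expectation of index-finite type $F$ is faithful, forcing $F(x)=x$. Thus every positive element of $D^{\beta}$ lies in $B$, and since positive elements span $D^{\beta}$ we conclude $B=D^{\beta}$. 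The main obstacle is the bookkeeping in the index multiplicativity, namely keeping track of the order of the composition and the fact that the central scalar $\Ind_W(F)$ is left untouched by $E_1^{\gamma,w}$; once $\Ind_W(F)=1$ is secured, the faithfulness argument closes the proof cleanly.
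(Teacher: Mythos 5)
Your proof is correct and follows the paper's argument essentially verbatim: reduce the statement to $\Ind_W(F)=1$, obtain this from Watatani's multiplicativity formula (Proposition 1.7.1) applied to $E_1^{\sigma,v}=F\circ E_1^{\gamma,w}$ together with $\Ind_W(E_1^{\sigma,v})=\Ind_W(E_1^{\gamma,w})=N$, and then conclude $B=D^{\beta}$. The only difference is that you spell out the last step (the Pimsner--Popa inequality $F(x)\geq\Ind_W(F)^{-1}x$ plus faithfulness of $F$), which the paper disposes of with a bare citation to \cite{Watatani:index}.
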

\begin{proof} It suffices to show that $\Ind_W (F)=1$. By \cite [Proposition 1.7.1]{Watatani:index},
$$
\Ind_W (E_1^{\sigma, v})=\Ind_W (F)\Ind_W (E_1^{\gamma, w}) .
$$
By \cite [Proposition 3.18]{KT1:inclusion} $\Ind_W (E_1^{\sigma, v})=\Ind_W (E_1^{\gamma, w})=N$.
Hence $\Ind_W (F)=1$. Therefore, we obtain the conclusion by \cite {Watatani:index}.
\end{proof}

Let $Y^{\mu}=\{y\in Y \, | \, \mu(y)=y\otimes 1^0 \}$. By \cite [Theorem 4.9]{Kodaka:equivariance},
there are a twisted coaction $\lambda$ of $H$ on $Y^{\mu}$ and a Hilbert $A\rtimes_{\rho, u}H-
B\rtimes_{\gamma, w}H$-bimodule isomorphism $\pi_Y$ of $Y^{\mu}\rtimes_{\lambda}H$ onto
$Y$ such that
$$
\psi\cdot_{\mu}\pi_Y (x\rtimes_{\lambda}h)=\pi_Y (\psi\cdot_{\widehat{\lambda}}(x\rtimes_{\lambda}h))
$$
for any $x\in Y^{\mu}$, $h\in H$, $\psi\in H^0$. Furthermore, by \cite [Lemma 3.10]{Kodaka:equivariance},
$Y^{\mu}$ is an $A-B$-equivalence bimodule and hence $\pi_Y$ is an $A\rtimes_{\rho, u}H-
B\rtimes_{\gamma, w}H$-equivalence bimodule isomorphism. We identify $Y$ with
$Y^{\mu}\rtimes_{\lambda}H$ by the isomorphism $\pi_Y$. Thus the twisted coactions $(\rho, u)$ and
$(\gamma, w)$ are strongly Morita equivalent with respect to the twisted coaction $\lambda$ of $H$
on the $A-B$-equivalence bimodule $Y^{\mu}$. We show that $Y^{\mu}=X$.

\begin{lemma}\label{lem:fix2} With the above notation, $Y^{\mu}=X$.
\end{lemma}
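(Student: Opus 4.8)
The plan is to prove the two inclusions $X\subseteq Y^{\mu}$ and $Y^{\mu}\subseteq X$ separately, with Lemma \ref{lem:action} serving as the bridge between the coaction $\mu$ and the conditional expectation $E^X$. First I would record that membership in the fixed-point space can be read off from the dual action: since the action of $H^0$ induced by $\mu$ satisfies $\psi\cdot_{\mu}y=(\id\otimes\psi)(\mu(y))$ for $\psi\in H^0$, $y\in Y$, and since $H^0$ separates the points of $H$, one has $y\in Y^{\mu}$ if and only if $\psi\cdot_{\mu}y=\epsilon^0 (\psi)y$ for every $\psi\in H^0$.

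For $X\subseteq Y^{\mu}$, let $x\in X$. As already observed in the proof of Lemma \ref{lem:commutant}, \cite[Lemma 3.2]{KT5:Hopf} yields $\psi\cdot_{\mu}x=\epsilon^0 (\psi)x$ for all $\psi\in H^0$; by the criterion just recorded this is exactly the statement $x\in Y^{\mu}$.

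For the reverse inclusion $Y^{\mu}\subseteq X$, take $y\in Y^{\mu}$ and apply the criterion with $\psi=\tau$, giving $\tau\cdot_{\mu}y=\epsilon^0 (\tau)y$. Since $\tau$ is the Haar trace on $H$ it is a state, so $\epsilon^0 (\tau)=\tau(1_H)=1$ (the same normalization that forces $E_1^{\rho, u}$ to restrict to the identity on $A$), whence $\tau\cdot_{\mu}y=y$. On the other hand Lemma \ref{lem:action} gives $\tau\cdot_{\mu}y=E^X (y)$, so $E^X (y)=y$; as $E^X$ is a conditional expectation onto $X$, this forces $y=E^X (y)\in X$. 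Combining the two inclusions gives $Y^{\mu}=X$.

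The displayed identities themselves are routine; the step that needs care is the passage between the coaction picture and the $H^0$-action picture, namely the relation $\psi\cdot_{\mu}y=(\id\otimes\psi)(\mu(y))$ together with the normalization $\epsilon^0 (\tau)=1$, since the whole argument turns on being able to recognize $E^X (y)=\tau\cdot_{\mu}y$ as the identity precisely on the elements fixed by $\mu$.
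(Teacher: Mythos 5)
Your proof is correct and follows essentially the same route as the paper: the inclusion $X\subseteq Y^{\mu}$ comes from \cite[Lemma 3.2]{KT5:Hopf}, and the reverse inclusion from evaluating at $\psi=\tau$, using $\epsilon^0(\tau)=1$ together with Lemma \ref{lem:action} to get $y=\tau\cdot_{\mu}y=E^X(y)\in X$. The only difference is that you make explicit the fixed-point criterion $y\in Y^{\mu}\Leftrightarrow\psi\cdot_{\mu}y=\epsilon^0(\psi)y$ for all $\psi\in H^0$ and the normalization $\epsilon^0(\tau)=\tau(1_H)=1$, which the paper uses tacitly.
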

\begin{proof} By \cite [Lemma 3.2]{KT5:Hopf}, $X\subset Y^{\mu}$. Also, for any $y\in Y^{\mu}$,
$\tau\cdot_{\mu}y=\epsilon^0 (\tau)y=y$. On the other hand, by Lemma \ref {lem:action},
$\tau\cdot_{\mu}y=E^X (y)$. Hence $y=E^X (y)\in X$. Thus we obtain that $Y^{\mu}\subset X$.
\end{proof}

By the above discussions, we obtain the following theorem:

\begin{thm}\label{thm:use} Let $H$ be a finite dimensional $C^*$-Hopf algebra and $H^0$
its dual $C^*$-Hopf algebra. Let $(\rho, u)$ and $(\sigma, v)$ be twisted
coactions of $H^0$ on unital $C^*$-algebras $A$ and $B$,
respectively. Let $A\subset A\rtimes_{\rho, u}H$ and $B\subset B\rtimes_{\sigma, v}H$ be unital
inclusions of unital $C^*$-algebras. We suppose that they are strongly Morita equivalent with respect to
an $A\rtimes_{\rho, u}H-B\rtimes_{\sigma,v}H$-equivalence bimodule $Y$ and its closed subspace $X$.
And we suppose that $A' \cap (A\rtimes_{\rho, u}H)=\BC1$. Then there are a twisted coaction $(\gamma, w)$
of $H^0$ on $B$ and a twisted coaction $\lambda$ of $H^0$ on $X$ satisfying the following:
\newline
$(1)$ $(\rho, u)$ and $(\gamma, w)$ are strongly Morita equivalent with respect to
$\lambda$,
\newline
$(2)$ $B\rtimes_{\sigma, v}H=B\rtimes_{\gamma, w}H$,
\newline
$(3)$ $Y\cong X\rtimes_{\lambda}H$ as $A\rtimes_{\rho, u}H-B\rtimes_{\sigma, v}H$-
equivalence bimodules.
\end{thm}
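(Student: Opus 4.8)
The plan is to assemble the lemmas of this section, which have been arranged precisely so that the three assertions fall out by renaming objects already constructed. Throughout I write $C=A\rtimes_{\rho, u}H$ and $D=B\rtimes_{\sigma, v}H$, and I recall that from the strong Morita equivalence of $A\subset C$ and $B\subset D$ together with the hypothesis $A'\cap C=\BC1$ we have already produced the coaction $\beta$ of $H$ on $D$, the coaction $\mu$ of $H$ on $Y$, the dual-coaction crossed products $C_1, D_1$, the isomorphism $\Psi$ of $D_1$ onto $D\rtimes_{\beta}H^0$, and the various expectations. Everything reduces to reading off $(\gamma, w)$, $\lambda$, $B=D^{\beta}$ and $X=Y^{\mu}$ from this apparatus.

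First I would recover the twisted coaction on the fixed-point algebra. By Lemma \ref{lem:saturated} the coaction $\beta$ is saturated, so $D^{\beta}$ carries the conditional expectation $E^{D^{\beta}}$, and by Lemma \ref{lem:equivalent} the projection $\widehat{\beta}(1_D\rtimes_{\beta}\tau)$ is Murray--von Neumann equivalent to $(1_D\rtimes_{\beta}\tau)\otimes 1^0$. Feeding these two facts into the crossed-product recognition theorem of \cite{KT1:inclusion} yields a twisted coaction $(\gamma, w)$ of $H^0$ on $D^{\beta}$ and an isomorphism $\pi_D$ of $D$ onto $D^{\beta}\rtimes_{\gamma, w}H$ intertwining $\widehat{\gamma}$ with $\beta$ and satisfying $E_1^{\sigma, v}=E^{D^{\beta}}\circ\pi_D$. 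Next I would identify $D^{\beta}$ with $B$: the inclusion $B\subset D^{\beta}$ is clear from the definition of $\beta$, and Lemmas \ref{lem:index} and \ref{lem:fix} show that the restricted expectation $F=E_1^{\sigma, v}|_{D^{\beta}}$ has trivial Watatani index, forcing $B=D^{\beta}$. With this identification $(\gamma, w)$ becomes a twisted coaction of $H^0$ on $B$ and $\pi_D$ becomes an isomorphism $D=B\rtimes_{\sigma, v}H\cong B\rtimes_{\gamma, w}H$, which is assertion $(2)$.

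Finally I would handle the bimodule. Applying the equivariance theorem of \cite{Kodaka:equivariance} to the coaction $\mu$ produces a twisted coaction $\lambda$ of $H$ on the fixed-point submodule $Y^{\mu}$ together with an $A\rtimes_{\rho, u}H$--$B\rtimes_{\gamma, w}H$-equivalence bimodule isomorphism $\pi_Y$ of $Y^{\mu}\rtimes_{\lambda}H$ onto $Y$, whence $(\rho, u)$ and $(\gamma, w)$ are strongly Morita equivalent with respect to $\lambda$; Lemma \ref{lem:fix2} identifies $Y^{\mu}=X$, which delivers assertions $(1)$ and $(3)$ simultaneously. The one genuinely delicate point, and the step on which the rest rests, is the pair of identifications $B=D^{\beta}$ and $X=Y^{\mu}$: these are not automatic but are forced by the index computation of Lemma \ref{lem:index} and by the equality $\tau\cdot_{\mu}y=E^X(y)$ of Lemma \ref{lem:action}, which is exactly why those lemmas were proved before reaching this theorem.
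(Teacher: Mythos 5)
Your proposal is correct and follows essentially the same route as the paper: the paper's proof of this theorem is precisely the accumulation of Section 4's lemmas, and you invoke them in the same order and for the same purposes --- saturation and the Murray--von Neumann equivalence to obtain $(\gamma, w)$ via the recognition theorem of \cite{KT1:inclusion}, the Watatani index computation to force $B=D^{\beta}$, the equivariance theorem of \cite{Kodaka:equivariance} for $\lambda$ and $\pi_Y$, and Lemma \ref{lem:fix2} (resting on Lemma \ref{lem:action}) to force $Y^{\mu}=X$. You also correctly single out the identifications $B=D^{\beta}$ and $X=Y^{\mu}$ as the substantive steps rather than mere renamings, which matches the paper's emphasis.
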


\section{Image}\label{sec:image} Let $A$ be a unital $C^*$-algebra and $X$ an involutive $A-A$-equivalence
bimodule. Let $A\subset C_X$ be the unital inclusion of unital $C^*$-algebras induced by $X$.
We suppose that $A' \cap C_X =\BC1$. Let $f_A$ be the homomorphism of $\Pic(A, C_X )$ onto
$\Pic(A)$ defined in Preliminaries, that is,
$$
f_A ([M, N])=[M]
$$
for any $(M, N)\in\Equi (A, C_X )$. In this section, we shall compute $\Ima f_A$, the image of $f_A$.
\par
Let $E^A$ be the conditional expectation from $C_X$ onto $A$ defined in Section \ref{sec:kernel} and
let $e_A$ be the Jones projection for $E^A$. Since $E^A$ is of Watatani index-finite type by
\cite [Lemma 3.4]{KT0:involution}, there is the $C^*$-basic construction of the inclusion $A\subset C_X$ for
$E^A$, which is the linking $C^*$-algebra $L_X$ for $X$, that is,
$$
L_X =\{\begin{bmatrix} a & x \\
\widetilde{y} & b \end{bmatrix} \, | \, a, b\in A, \, \, x, y\in X \} .
$$
By \cite [Lemma 2.6]{KT0:involution}, we can see that there is the action $\alpha^X$ of $\BZ_2$,
the group of order two, on $C_X$ defined by
$$
\alpha^X (\begin{bmatrix} a & x \\
\widetilde{x^{\natural}} & a \end{bmatrix})=\begin{bmatrix} a & -x \\
-\widetilde{x^{\natural}} & a \end{bmatrix}
$$
for any $\begin{bmatrix} a & x \\
\widetilde{x^{\natural}} & a \end{bmatrix}\in C_X$ and that
$L_X \cong C_X \rtimes_{\alpha^X}\BZ_2$ as $C^*$-algebras.
We note that we regard an action $\beta$ of $\BZ_2$ on a unital $C^*$-algebra $B$
as the automorphism $\beta$ of $B$ with $\beta^2 =\id$ on $B$. We identify $L_X$ with
$C_X\rtimes_{\alpha^X}\BZ_2$. Let $M$ be an $A-A$-equivalence bimodule satisfying that
$$
\widetilde{M}\otimes_A X\otimes_A M\cong X
$$
as involutive $A-A$-equivalence bimodules. Then by the proof of \cite [Lemma 5.11]{Kodaka:bundle},
we can see that there is an element $(M, C_M )\in\Equi (A, C_X )$, where $C_M$ is a $C_X -C_X$-equivalence
bimodule induced by $M$, which is defined in \cite [Section 5.]{Kodaka:bundle}.
Next, we show that
$$
\widetilde{M}\otimes_A X\otimes_A M\cong X
$$
as involutive $A-A$-equivalence bimodules for any $(M, N)\in \Equi (A, C_X )$.
Let $(M, N)$ be any element in $\Equi(A, C_X )$. Since $A' \cap C_X =\BC1$,
by \cite [Lemma 4.1]{Kodaka:Picard} there is the unique conditional expectation
$E^M$ from $N$ onto $M$ with respect to $E^A$ and $E^A$. Let $N_1$ be the upward
basic construction of $N$ for $E^M$ (See \cite [Definition 6.5]{KT4:morita}). Then by
\cite [Corollary 6.3]{KT4:morita}, the unital inclusion $C_X \subset L_X$ is strongly Morita
equivalent to itself with respect to $N_1$ and its closed subspace $N$. Hence
by Theorem \ref{thm:use}, there are an action $\gamma$ of
$\BZ_2$ on $C_X$ and an action $\lambda$ of $\BZ_2$ on $N$ satisfying the following:
\vskip 0.1cm
\noindent
(1) The actions $\alpha^X$ and $\gamma$ of $\BZ_2$ on $C_X$ are strongly Morita
equivalent with respect to the action $\lambda$ of $\BZ_2$ on $N$,
\vskip 0.1cm
\noindent
(2) $L_X =C_X \rtimes_{\alpha^X}\BZ_2 =C_X \rtimes_{\gamma}\BZ_2$,
\vskip 0.1cm
\noindent
(3) $N_1 \cong N\rtimes_{\lambda}\BZ_2$ as $L_X -L_X$-equivalence bimodules.
\vskip 0.1cm
\noindent
We identify $N_1$ with $N\rtimes_{\lambda}\BZ_2$.
Let $\widehat{\alpha}^X$ be the dual action of $\alpha^X$, which is an action of $\BZ_2$ on $L_X$.
We regard $\widehat{\alpha}^X$ as an automorphism of $L_X$ with $(\widehat{\alpha}^X)^2 =\id$ on $L_X$,
which is defined by
\begin{align*}
& \widehat{\alpha}^X (\begin{bmatrix} a & x \\
\widetilde{x^{\natural}} & a \end{bmatrix})=\begin{bmatrix} a & x \\
\widetilde{x^{\natural}} & a \end{bmatrix} \, \text{for any} \, \begin{bmatrix} a & x \\
\widetilde{x^{\natural}} & a \end{bmatrix}\in C_X , \\
& \widehat{\alpha}^X (\begin{bmatrix} 1 & 0 \\
0 & 0 \end{bmatrix})
=\begin{bmatrix} 0 & 0 \\
0 &1 \end{bmatrix} .
\end{align*}
Let $(L_X )_{\widehat{\alpha}^X}$ be the involutive $L_X -L_X$-equivalence bimodule induced
by $\widehat{\alpha}^X$, that is, $(L_X )_{\widehat{\alpha}^X}=L_X$ as vector spaces over $\BC$ and
the left $L_X$-action and the left $L_X$-valued inner product on $(L_X )_{\widehat{\alpha}^X}$
are defined in the usual way. The right $L_X$-action and the right $L_X$-valued inner product on
$(L_X )_{\widehat{\alpha}^X}$ are defined as follows: For any $a\in L_X$,
$x, y\in (L_X )_{\widehat{\alpha}^X}$,
$$
x\cdot a=x\widehat{\alpha}^X (a), \quad \la x, y \ra_{L_X}=\widehat{\alpha}^X (x^* y) .
$$
Furthermore, we define the involution $\natural$ as follows: For any $x\in (L_X )_{\widehat{\alpha}^X}$,
$$
x^{\natural} =\widehat{\alpha}^X (x)^* . 
$$
Then by easy computations $(L_X )_{\widehat{\alpha}^X}$ is an involutive
$L_X -L_X$-equivalence bimodule. Let $\widehat{\lambda}$ be the dual action of $\lambda$,
which is an action of $\BZ_2$ by linear automorphisms of $N_1 =N\rtimes_{\lambda}Z_2$
such that
\begin{align*}
\widehat{\alpha}^X ({}_{L_X} \la m, n \ra) & ={}_{L_X} \la \widehat{\lambda}(m) \, , \, \widehat{\lambda}(n) \ra ,\\ 
\widehat{\gamma}(\la m, n \ra_{L_X}) & =\la \widehat{\lambda}(m) \, , \, \widehat{\lambda}(n) \ra_{L_X}
\end{align*}
for any $m, n\in N_1$, where we regard the action $\widehat{\lambda}$ as a linear automorphism of $N_1$
with $\widehat{\lambda}^2 =\id$ on $N_1$. We note that
$$
\widehat{\lambda}(x\cdot m)=\widehat{\alpha}^X (x)\cdot \widehat{\lambda}(m) , \quad
\widehat{\lambda}(m\cdot x)=\widehat{\lambda}(m)\cdot \widehat{\gamma}(x)
$$
for any $m\in N_1$, $x\in L_X$. Since $L_X =C_X \rtimes_{\alpha^X}\BZ_2=C_X \rtimes_{\gamma}\BZ_2$
and $N_1 =N\rtimes_{\lambda}\BZ_2$, in the same way as after the proof of Lemma \ref{lem:action}
and in the proof of Lemma
\ref{lem:equation} or by the discussions of \cite [Section 5]{KT5:Hopf}, there is an automorphism
$\kappa$ of $L_X$ satisfying the following:
$$
\widehat{\gamma}\circ\kappa=\kappa\circ\widehat{\alpha}^X , \quad
\kappa|_{C_X}=\id_{C_X} .
$$
Then $\kappa|_{A' \cap L_X}$ is an automorphism of $A' \cap L_X$. And by \cite {KT0.5:character},
\cite {Kodaka:bundle}, $A' \cap L_X \cong \BC^2$. Since $e_A \in A' \cap L_X$,
$\kappa(e_A )=e_A$ or $1-e_A$. If $\kappa(e_A )=e_A$, $\kappa=\id_{L_X}$ since
$\kappa|_{C_X}=\id_{C_X}$. Hence $\widehat{\gamma}=\widehat{\alpha}^X$. If $\kappa(e_A )=1-e_A$,
$\kappa=\widehat{\alpha}^X$ since $\kappa=\widehat{\alpha}^X =\id$ on $C_X$.
Hence $\widehat{\gamma}\circ\widehat{\alpha}^X =\widehat{\alpha}^X \circ\widehat{\alpha}^X =\id_{L_X}$.
Thus $\widehat{\gamma}=(\widehat{\alpha}^X )^{-1}=\widehat{\alpha}^X$. Then, we obtain the
following:

\begin{lemma}\label{lem:iso1} With the above notation,
$$
\widetilde{N_1}\otimes_{L_X}(L_X )_{\widehat{\alpha}^X}\otimes_{L_X}N_1 \cong (L_X )_{\widehat{\alpha}^X}
$$
as $L_X -L_X$-equivalence bimodules.
\end{lemma}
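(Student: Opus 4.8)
The plan is to reduce the claimed isomorphism to the single fact that $(L_X )_{\widehat{\alpha}^X}\otimes_{L_X}N_1 \cong N_1 \otimes_{L_X}(L_X )_{\widehat{\alpha}^X}$ as $L_X -L_X$-equivalence bimodules, together with the defining property $\widetilde{N_1}\otimes_{L_X}N_1 \cong L_X$ of an equivalence bimodule. Writing $P=(L_X )_{\widehat{\alpha}^X}$, once the commutation isomorphism $P\otimes_{L_X}N_1 \cong N_1 \otimes_{L_X}P$ is available, I would compute
\begin{align*}
\widetilde{N_1}\otimes_{L_X}P\otimes_{L_X}N_1 & \cong \widetilde{N_1}\otimes_{L_X}(N_1 \otimes_{L_X}P) \\
& \cong (\widetilde{N_1}\otimes_{L_X}N_1 )\otimes_{L_X}P \cong L_X \otimes_{L_X}P \cong P,
\end{align*}
using associativity of the interior tensor product, which is exactly the assertion.

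To obtain the commutation isomorphism I would first record the two standard twisting identities for the bimodule $P$. Since $P=(L_X )_{\widehat{\alpha}^X}$ is the equivalence bimodule attached to the automorphism $\widehat{\alpha}^X$, for any $L_X -L_X$-equivalence bimodule $M$ one has $M\otimes_{L_X}P\cong M_{\widehat{\alpha}^X}$ and $P\otimes_{L_X}M\cong {}_{\widehat{\alpha}^X}M$, where $M_{\widehat{\alpha}^X}$ (resp. ${}_{\widehat{\alpha}^X}M$) denotes $M$ with its right (resp. left) module action and right (resp. left) $L_X$-valued inner product composed with $\widehat{\alpha}^X$, the remaining structure being unchanged. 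Here I use that $\widehat{\alpha}^X$ has order two, so that $(\widehat{\alpha}^X )^{-1}=\widehat{\alpha}^X$ and no distinction between $\widehat{\alpha}^X$ and its inverse arises. Applying these with $M=N_1$ reduces the commutation isomorphism to producing an $L_X -L_X$-equivalence bimodule isomorphism ${}_{\widehat{\alpha}^X}N_1 \cong (N_1 )_{\widehat{\alpha}^X}$.

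The crux of the argument, and what I expect to be the main obstacle, is to check that the dual action $\widehat{\lambda}$ on $N_1$ furnishes precisely this isomorphism. I would regard $\widehat{\lambda}\colon {}_{\widehat{\alpha}^X}N_1 \to (N_1 )_{\widehat{\alpha}^X}$ and verify that it is a bimodule map: the module-action identities $\widehat{\lambda}(x\cdot m)=\widehat{\alpha}^X (x)\cdot\widehat{\lambda}(m)$ and $\widehat{\lambda}(m\cdot x)=\widehat{\lambda}(m)\cdot\widehat{\gamma}(x)$, combined with the already established equality $\widehat{\gamma}=\widehat{\alpha}^X$ and with $(\widehat{\alpha}^X )^2 =\id$, match the twisted left action on the source to the ordinary left action on the target and the ordinary right action on the source to the twisted right action on the target. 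For the inner products, the relations $\widehat{\alpha}^X ({}_{L_X}\la m,n\ra)={}_{L_X}\la\widehat{\lambda}(m),\widehat{\lambda}(n)\ra$ and $\widehat{\alpha}^X (\la m,n\ra_{L_X})=\la\widehat{\lambda}(m),\widehat{\lambda}(n)\ra_{L_X}$ (again using $\widehat{\gamma}=\widehat{\alpha}^X$) identify the twisted left inner product on ${}_{\widehat{\alpha}^X}N_1$ with the ordinary one on $(N_1 )_{\widehat{\alpha}^X}$, and symmetrically for the right inner products. The delicate points are keeping track of which side each $\widehat{\alpha}^X$ lands on and repeatedly invoking the order-two relation so that the twists cancel correctly; once this bookkeeping is done, $\widehat{\lambda}$ is seen to be bijective (its square being the identity) and isometric for both inner products, hence an equivalence bimodule isomorphism, which completes the proof.
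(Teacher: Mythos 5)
Your proof is correct, and it reaches the conclusion by a genuinely different route than the paper. You factor the isomorphism through the standard twisting identities $M\otimes_{L_X}(L_X)_{\widehat{\alpha}^X}\cong M_{\widehat{\alpha}^X}$ and $(L_X)_{\widehat{\alpha}^X}\otimes_{L_X}M\cong {}_{\widehat{\alpha}^X}M$, and reduce everything to the single assertion ${}_{\widehat{\alpha}^X}N_1\cong (N_1)_{\widehat{\alpha}^X}$, which you realize via $\widehat{\lambda}$ using exactly the two inputs the paper establishes just before the lemma: $\widehat{\gamma}=\widehat{\alpha}^X$ and the compatibility of $\widehat{\lambda}$ with the module actions and the two inner products; the order-two relation $(\widehat{\alpha}^X)^2=\id$ makes all the twists cancel as you say. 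The paper instead writes down an explicit map $\pi(\widetilde{m}\otimes x\otimes n)=\la x^*\cdot m\,,\,\widehat{\lambda}(n)\ra_{L_X}$ and checks surjectivity and preservation of both inner products by direct computation; in fact, if you unwind your chain of identifications you recover precisely this formula, so the two arguments produce the same isomorphism. Your approach buys economy (no inner-product computations beyond standard facts about equivalence bimodules) and makes transparent where $\widehat{\gamma}=\widehat{\alpha}^X$ enters. What the paper's computation buys is a conclusion stronger than the lemma's wording: $\pi$ is also shown to preserve the involutions $\natural$, where $(\widetilde{m}\otimes x\otimes n)^{\natural}=\widetilde{n}\otimes x^{\natural}\otimes m$ and $x^{\natural}=\widehat{\alpha}^X(x)^*$ on $(L_X)_{\widehat{\alpha}^X}$, and it is this involutive version that Proposition \ref{prop:iso4} actually invokes when it asserts its chain of isomorphisms holds ``as involutive $A-A$-equivalence bimodules.'' So your argument fully proves the statement as literally given, but to serve the paper's later purpose you would need to supplement it, either by tracking the involution through your chain of identifications or by verifying directly that the resulting explicit map (the paper's $\pi$) is $\natural$-preserving.
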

\begin{proof} We note that $N_1 =N\rtimes_{\lambda}\BZ_2$. Let $\pi$ be the linear map
from $\widetilde{N_1}\otimes_{L_X}(L_X )_{\widehat{\alpha}^X}\otimes_{L_X}N_1 $
to $(L_X )_{\widehat{\alpha}^X}$ defined by
$$
\pi(\widetilde{m}\otimes x\otimes n)=\la x^* \cdot m \, , \, \widehat{\lambda}(n) \ra_{L_X}
$$
for any $m, n \in N_1$, $x\in (L_X )_{\widehat{\alpha}^X}$, where we regard
$\la x^* \cdot m \, , \, \widehat{\lambda}(n) \ra_{L_X}$ as an element in $(L_X )_{\widehat{\alpha}^X}$.
We show that $\pi$ is an involutive $L_X -L_X$-equivalence bimodule isomorphism of
$\widetilde{N_1}\otimes_{L_X}(L_X )_{\widehat{\alpha}^X}\otimes_{L_X}N_1 $
onto $(L_X )_{\widehat{\alpha}^X}$. By routine computations, we can see that $\pi$ is well-defined.
Since $L_X \cdot N_1 =N_1$ by Brown, Mingo and Shen \cite [Proposition 1.7]{BMS:quasi}
and $(L_X )_{\widehat{\alpha}^X}$ is full with respect to the right $L_X$-valued inner product,
$\pi$ is surjective. For any $m, n, m_1 , n_1 \in N_1$, $x , x_1 \in L_X$,
\begin{align*}
\la \pi(\widetilde{m}\otimes x\otimes n) \, , \, \pi(\widetilde{m_1}\otimes x_1 \otimes n_1) \ra_{L_X} & =
\la \la x^* \cdot m \, , \, \widehat{\lambda}(n) \ra_{L_X} \, , \, \la x_1^* \cdot m_1 \, , \,
\widehat{\lambda}(n_1 )\ra_{L_X} \ra_{L_X} \\
& =\widehat{\alpha}^X (\la \widehat{\lambda}(n) \, , \, x^* \cdot m \ra_{L_X}\la x_1^* \cdot m_1 \, , \,
\widehat{\lambda}(n_1 ) \ra_{L_X} ) \\
& =\la n \, , \, \widehat{\lambda}(x^* \cdot m)\ra_{L_X} \la\widehat{\lambda}(x_1^* \cdot m_1) \, , \,
n_1 \ra_{L_X} .
\end{align*}
On the other hand,
\begin{align*}
\la \widetilde{m}\otimes x\otimes n \, , \, \widetilde{m_1}\otimes x_1 \otimes n_1 \ra_{L_X} & =
\la n \, , \, \la \widetilde{m}\otimes x \, , \, \widetilde{m_1}\otimes x_1 \ra_{L_X} \cdot n_1 \ra_{L_X} \\
& =\la n \, , \, \la x \, , \, \la \widetilde{m} \, , \, \widetilde{m_1} \ra_{L_X}\cdot x_1 \ra_{L_X} \cdot n_1
\ra_{L_X} \\
& =\la n \, , \, \la x \, , \, {}_{L_X} \la m \, , \, m_1 \ra \cdot x_1 \ra_{L_X}\cdot n_1 \ra_{L_X} \\
& =\la n \, , \, \widehat{\alpha}^X (x^* \, {}_{L_X} \! \la m \, , \, m_1 \ra x_1 )\cdot n_1 \ra_{L_X} \\
& =\la n \, , \, \widehat{\alpha}^X ({}_{L_X} \! \la x^* \cdot m \, , \, x_1^* \cdot m_1 \ra )\cdot n_1 \ra_{L_X} \\
& =\la n \, , \, \widehat{\lambda}(x^* \cdot m)\cdot\la \widehat{\lambda}(x_1^* \cdot m_1 ) \, , \, n_1 \ra_{L_X}
\ra_{L_X} \\
& =\la n \, , \, \widehat{\lambda}(x^* \cdot m) \ra_{L_X}\la \widehat{\lambda}(x_1^* \cdot m_1 ) \, , \,
n_1 \ra_{L_X} .
\end{align*}
Hence $\pi$ preserves the right $L_X$-valued inner products. 
Similarly, we can see that $\pi$ preserves the left $L_X$-valued inner products.
Thus we can obtain that
$\pi$ is an $L_X -L_X$-equivalence bimodule isomorphism by the remark after \cite [Definition 1.1.18]
{JT:KK}. Furthermore,
\begin{align*}
\pi(\widetilde{m}\otimes x\otimes n)^{\natural} & =
\la x^* \cdot m \, , \, \widehat{\lambda}(n) \ra_{L_X}^{\natural}
=\widehat{\alpha}^X (\la x^* \cdot m \, , \, \widehat{\lambda}(n) \ra_{L_X}^* ) \\
& =\widehat{\alpha}^X (\la \widehat{\lambda}(n) \, , \, x^* \cdot m \ra_{L_X} )
=\la n \, , \, \widehat{\lambda}(x^* \cdot m) \ra_{L_X} .
\end{align*}
On the other hand,
\begin{align*}
\pi((\widetilde{m}\otimes x\otimes n)^{\natural}) & =\pi(\widetilde{n}\otimes x^{\natural}\otimes m)
= \la (x^{\natural})^* \cdot n \, , \, \widehat{\lambda}(m) \ra_{L_X} \\
& =\la \widehat{\alpha}^X (x)\cdot n \, , \, \widehat{\lambda}(m) \ra_{L_X}
=\la n \, , \, \widehat{\alpha}^X (x)^* \cdot \widehat{\lambda}(m) \ra_{L_X} \\
& =\la n \, , \, \widehat{\lambda}(x^* \cdot m) \ra_{L_X} .
\end{align*}
Hence $\pi$ preserves involutions $\natural$. Therefore, we obtain the conclusion.
\end{proof}

We regard $e_A L_X$ as an $A-L_X$-equivalence bimodule in the usual way,
where we identify $e_A L_X e_A$ with $A$. Also, we regard $L_X e_A$ as an $L_X -A$-
equivalence bimodule in the usual way. We note that $L_X e_A \cong \widetilde{e_A L_X}$
as $L_X -A$-equivalence bimodules by the map $xe_A \in L_X e_A \mapsto e_A x^*
\in \widetilde{e_A L_X}$. In the same way as in \cite [Section 3]{KT0:involution}, we regard
$e_A L_X (1-e_A )$ as an involutive $A-A$-equivalence bimodule.

\begin{lemma}\label{lem:iso2} With the above notation,
$$
e_A L_X \otimes_{L_X}(L_X )_{\widehat{\alpha}^X} \otimes_{L_X} L_X e_A \cong e_A L_X (1-e_A )\cong X
$$
as involutive $A-A$-equivalence bimodules.
\end{lemma}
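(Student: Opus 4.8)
The plan is to produce one explicit map $\Pi$ and verify that it is an isomorphism of involutive $A-A$-equivalence bimodules; the second isomorphism $e_A L_X(1-e_A)\cong X$ is the standard identification from \cite[Section 3]{KT0:involution}, which I would simply invoke.

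First I would pin down $e_A$ concretely. Under $L_X\cong C_X\rtimes_{\alpha^X}\BZ_2$, writing $\lambda$ for the canonical unitary implementing the nontrivial element of $\BZ_2$, the Jones projection is $e_A=\frac{1}{2}(1+\lambda)$: since $E^A=\frac{1}{2}(\id+\alpha^X)$ one checks $e_A c e_A=E^A(c)e_A$ directly. In the matrix picture $\lambda=\begin{bmatrix}1&0\\0&-1\end{bmatrix}$, so $e_A=\begin{bmatrix}1&0\\0&0\end{bmatrix}$ and $1-e_A=\begin{bmatrix}0&0\\0&1\end{bmatrix}$. Because $\widehat{\alpha}^X$ fixes $C_X$ and sends $\lambda$ to $-\lambda$, this yields the key relations $\widehat{\alpha}^X(e_A)=1-e_A$ and $\widehat{\alpha}^X(e_A L_X e_A)=(1-e_A)L_X(1-e_A)$, with $\widehat{\alpha}^X$ acting as the identity on $A$ under the two obvious corner identifications $A\cong e_A L_X e_A$ and $A\cong(1-e_A)L_X(1-e_A)$.

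Next I would define
\[
\Pi(\xi\otimes\eta\otimes\omega)=\xi\,\eta\,\widehat{\alpha}^X(\omega)
\]
for $\xi\in e_A L_X$, $\eta\in(L_X)_{\widehat{\alpha}^X}$ and $\omega\in L_X e_A$; this is nothing but the composite of the two obvious collapsing isomorphisms $e_A L_X\otimes_{L_X}(L_X)_{\widehat{\alpha}^X}\cong(e_A L_X)_{\widehat{\alpha}^X}$ and $(e_A L_X)_{\widehat{\alpha}^X}\otimes_{L_X}L_X e_A\cong e_A L_X(1-e_A)$. Since $\xi=e_A\xi$ and $\widehat{\alpha}^X(\omega)=\widehat{\alpha}^X(\omega)(1-e_A)$, the image lies in $e_A L_X(1-e_A)$. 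Well-definedness over the first balancing uses only that the left $L_X$-action on $(L_X)_{\widehat{\alpha}^X}$ is ordinary multiplication, and over the second balancing it uses that the twisted right action $\eta\cdot c=\eta\,\widehat{\alpha}^X(c)$ combines with $\widehat{\alpha}^X(\omega)$ into $\widehat{\alpha}^X(c\omega)$. Left $A$-linearity is immediate, and right $A$-linearity is exactly where the twist enters: it converts the right action of $A\cong e_A L_X e_A$ on $L_X e_A$ into the right action of $A\cong(1-e_A)L_X(1-e_A)$ on $e_A L_X(1-e_A)$.

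Finally I would check the inner products and the involution. The involution on $e_A L_X(1-e_A)$ is $\zeta^{\natural}=\widehat{\alpha}^X(\zeta)^*$ (a short computation shows this matches $\natural$ on $X$), while the triple tensor product carries the natural involution $(\xi\otimes\eta\otimes\omega)^{\natural}=\omega^*\otimes\widehat{\alpha}^X(\eta)^*\otimes\xi^*$ coming from $L_X e_A\cong\widetilde{e_A L_X}$ and the involutive structure of $(L_X)_{\widehat{\alpha}^X}$; that $\Pi$ intertwines them follows from $(\widehat{\alpha}^X)^2=\id$. For the right $A$-valued inner product, the value computed on the triple tensor product lies in $e_A L_X e_A$, whereas the value on $e_A L_X(1-e_A)$ lies in $(1-e_A)L_X(1-e_A)$, and the two coincide after applying $\widehat{\alpha}^X$; surjectivity is clear (take $\eta=1$ and $\omega=e_A$), so $\Pi$ is an isomorphism by the remark after \cite[Definition 1.1.18]{JT:KK} exactly as in the proof of Lemma \ref{lem:iso1}. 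The main obstacle is the bookkeeping of the two distinct corner-copies of $A$, namely $e_A L_X e_A$ and $(1-e_A)L_X(1-e_A)$, which are interchanged by $\widehat{\alpha}^X$: one must verify that the twist built into $\Pi$ reconciles both the right module structure and the right $A$-valued inner product across these two copies, after which everything is formal.
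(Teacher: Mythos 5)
Your proof is correct and is essentially the paper's own argument: your map $\Pi(\xi\otimes\eta\otimes\omega)=\xi\,\eta\,\widehat{\alpha}^X(\omega)$ is exactly the map $\pi(e_A x\otimes y\otimes z e_A)=e_A xy\widehat{\alpha}^X(z)(1-e_A)$ used in the paper, verified the same way (inner products, surjectivity, the remark after \cite[Definition 1.1.18]{JT:KK}, then the involution), with the identification $e_A L_X(1-e_A)\cong X$ cited from \cite{KT0:involution} in both cases. The extra details you supply -- the crossed-product picture $e_A=\frac{1}{2}(1+\lambda)$ giving $\widehat{\alpha}^X(e_A)=1-e_A$ (which the paper cites as \cite[Remark 2.7]{KT0:involution}) and the bookkeeping of the two corner copies of $A$ for the right-hand inner product (which the paper dispatches with ``similarly'') -- are accurate refinements, not deviations.
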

\begin{proof} By \cite [Theorem 3.11]{KT0:involution}, we can see that $e_A L_X (1-e_A )\cong X$
as involutive $A-A$-equivalence bimodules. Let $\pi$ be the linear map from $e_A L_X \otimes_{L_X}
(L_X )_{\widehat{\alpha}^X}\otimes_{L_X}L_X e_A$ to $e_A L_X (1-e_A )$ defined by
$$
\pi(e_A x\otimes y\otimes ze_A )=e_A xy\widehat{\alpha}^X (ze_A )
=e_A xy\widehat{\alpha}^X (z)(1-e_A )
$$
for any $x, y, z \in L_X$. We note that $\widehat{\alpha}^X (e_A )=1-e_A$ by
\cite [Remark 2.7]{KT0:involution}. Clearly $\pi$ is surjective. For any $x, y, z, x_1 , y_1 , z_1 \in L_X$,
\begin{align*}
& {}_A \la \pi(e_A x\otimes y\otimes ze_A ) \, ,\, \pi(e_A x_1 \otimes y_1 \otimes z_1 e_A ) \ra \\
& ={}_A \la e_A xy\widehat{\alpha}^X (z)(1-e_A ) \, , \, e_A x_1 y_1 \widehat{\alpha}^X (z_1 )(1-e_A ) \ra \\
& =e_A xy\widehat{\alpha}^X (z)(1-e_A )\widehat{\alpha}^X (z_1^* )y_1^*x_1^* e_A .
\end{align*}
On the other hand,
\begin{align*}
& {}_A \la e_A x\otimes y\otimes ze_A \, , \, e_A x_1 \otimes y_1\otimes z_1 e_A \ra \\
& ={}_A \la e_A x\cdot {}_{L_X} \la y\otimes ze_A \, , \, y_1 \otimes z_1 e_A \ra \, , \, e_A x_1 \ra \\
& =[e_A x\cdot {}_{L_X} \la y\otimes ze_A \, , \, y_1 \otimes z_1 e_A \ra ]x_1^* e_A \\
& =e_A x\, {}_{L_X} \la y\otimes ze_A \, , \, y_1 \otimes z_1 e_A \ra x_1^* e_A \\
& =e_A x\, {}_{L_X} \!\la y\cdot{}_{L_X} \la ze_A \, , \, z_1 e_A \ra \, , \, y_1 \ra x_1^* e_A \\
& =e_A x \, {}_{L_X} \! \la y\cdot ze_A z_1^* \, , \, y_1 \ra x_1^* e_A \\
& =e_A x \, {}_{L_X} \! \la y\widehat{\alpha}^X (ze_A z_1^* ) \, , \, y_1 \ra x_1^* e_A \\
& =e_A xy\widehat{\alpha}^X (ze_A z_1^* )y_1^* x_1^* e_A \\
& =e_A xy\widehat{\alpha}^X (z)(1-e_A )\widehat{\alpha}^X (z_1^* )y_1^* x_1^* e_A .
\end{align*}
Hence $\pi$ preserves the left $A$-valued inner products.
Similarly, we can see that $\pi$ preserves the right $A$-valued inner products.
Thus we can obtain that $\pi$ is an
$A-A$-equivalence bimodule isomorphism by the remark after \cite [Definition 1.1.18]{JT:KK}.
Furthermore,
\begin{align*}
\pi(e_A x\otimes y\otimes ze_A )^{\natural} & =(e_A xy\widehat{\alpha}^X (z)(1-e_A ))^{\natural}
=\widehat{\alpha}^X (1-e_A )z^* \widehat{\alpha}^X (y^* x^* e_A ) \\
& =e_A z^* \widehat{\alpha}^X (y^* x^* )(1-e_A ) .
\end{align*}
On the other hand,
$$
\pi((e_A x\otimes y\otimes ze_A )^{\natural})=\pi(e_A z^* \otimes \widehat{\alpha}^X (y^* )\otimes x^* e_A )
=e_A z^* \widehat{\alpha}^X (y^* x^* )(1-e_A ) .
$$
Hence $\pi$ preserves the involutions $\natural$. Therefore, we obtain the conclusion.
\end{proof}

\begin{lemma}\label{lem:iso3} With the above notation, $e_A L_X \otimes_{L_X}C_X \cong A$
as $A-A$-equivalence bimodules, where $C_X$ is regarded as an $L_X -A$-equivalence bimodule
in the usual way and $A$ is regarded as the trivial $A-A$-equivalence bimodule.
\end{lemma}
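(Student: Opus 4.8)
The plan is to recognize $C_X$ as the standard $L_X$-$A$-equivalence bimodule coming from the Watatani basic construction and to match it with the corner $L_X e_A$, after which the conclusion follows formally from the Morita theory of full corners. Since $L_X$ is the $C^*$-basic construction of $A\subset C_X$ for $E^A$ and $E^A$ is of Watatani index-finite type, the Jones projection $e_A$ is a full projection in $L_X$ with $e_A L_X e_A =A$; concretely, in the matrix picture $L_X =\{\begin{bmatrix} a & x \\ \widetilde{y} & b\end{bmatrix}\}$ one has $e_A =\begin{bmatrix} 1 & 0 \\ 0 & 0\end{bmatrix}$ (this is consistent with $\widehat{\alpha}^X (e_A )=1-e_A$), so that $L_X e_A$ and $e_A L_X$ are exactly the $L_X$-$A$- and $A$-$L_X$-equivalence bimodules regarded in the usual way as in the paragraph preceding Lemma \ref{lem:iso2}.

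First I would show that the map
$$
\Phi_0 : C_X \longrightarrow L_X e_A , \qquad \Phi_0 (\xi)=\xi e_A
$$
is an $L_X$-$A$-equivalence bimodule isomorphism, where $C_X$ carries its usual structure as the standard module. The right $A$-action and the right $A$-valued inner product are immediate from $a e_A =e_A a$ for $a\in A$ and from $e_A \xi^* \eta e_A =E^A (\xi^* \eta)e_A$ for $\xi, \eta\in C_X$. The point requiring care is the left $L_X$-action: for $c\in C_X$ it is ordinary multiplication and $\Phi_0$ is clearly equivariant, but the left action of $e_A$ on the standard module is $\xi\mapsto E^A (\xi)$, so equivariance at $e_A$ is precisely the defining relation $e_A \xi e_A =E^A (\xi)e_A$ of the Jones projection. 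Surjectivity of $\Phi_0$ amounts to $C_X e_A =L_X e_A$, which follows from $e_A c e_A =E^A (c)e_A$ together with the fullness of $e_A$ (in the matrix picture it is the bijectivity of $x\mapsto x^{\natural}$ on $X$).

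Dualizing $\Phi_0$ then gives an $A$-$L_X$-equivalence bimodule isomorphism $\widetilde{C_X}\cong \widetilde{L_X e_A}=e_A L_X$, namely $\widetilde{\xi}\mapsto e_A \xi^*$. Since $C_X$ is an $L_X$-$A$-equivalence bimodule, $\widetilde{C_X}\otimes_{L_X}C_X\cong A$ canonically via $\widetilde{\xi}\otimes\eta\mapsto \la \xi, \eta\ra_A =E^A (\xi^* \eta)$, where $A$ is the trivial $A$-$A$-equivalence bimodule. Composing these isomorphisms I obtain
$$
e_A L_X \otimes_{L_X}C_X \cong \widetilde{C_X}\otimes_{L_X}C_X \cong A ,
$$
which is the claim; explicitly the composite is $e_A \eta\otimes\xi\mapsto E^A (\eta\xi)$ for $\eta, \xi\in C_X$.

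The main obstacle is the middle step, establishing that $\Phi_0$ intertwines the left $L_X$-actions: because the left action of $L_X$ on the standard module $C_X$ is the basic-construction action rather than matrix multiplication inside $L_X$ (in particular $e_A$ acts as $E^A$), one must check equivariance separately on the generators $C_X$ and $e_A$ of $L_X$ and verify their compatibility. All remaining verifications are the routine inner-product computations in the style of Lemmas \ref{lem:iso1} and \ref{lem:iso2}. Alternatively, one can bypass the dualization and verify directly that $e_A \eta\otimes\xi\mapsto E^A (\eta\xi)$ is a well-defined $A$-$A$-equivalence bimodule isomorphism onto the trivial bimodule, preserving both $A$-valued inner products.
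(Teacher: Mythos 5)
Your proposal is correct, and it arrives at exactly the same isomorphism as the paper --- unwinding your composite gives $e_A\eta\otimes\xi\mapsto E^A(\eta\xi)$, which on elements $e_A a e_A b\otimes c$ is the paper's map $\pi(e_A a e_A b\otimes c)=E^A(a)E^A(bc)$ --- but the route is genuinely different. The paper simply writes this map down and verifies by hand that it is surjective and preserves the left and right $A$-valued inner products, concluding via the remark after \cite[Definition 1.1.18]{JT:KK}; this is self-contained and follows the same computational template as Lemmas \ref{lem:iso1} and \ref{lem:iso2}. You instead factor the statement through two structural facts: the identification $C_X\cong L_X e_A$ of the standard (basic-construction) bimodule with the corner, where the only nontrivial checks are $e_A$-equivariance (the Jones relation $e_A\xi e_A=E^A(\xi)e_A$) and surjectivity $C_X e_A=L_X e_A$ (correctly reduced to bijectivity of $\natural$ on $X$; note that your appeal to ``fullness of $e_A$'' is not quite the right justification --- what is really used is $L_X={\rm span}\, C_X e_A C_X$ --- but your matrix-picture argument is airtight); and then the canonical Morita collapse $\widetilde{C_X}\otimes_{L_X}C_X\cong A$, $\widetilde{\xi}\otimes\eta\mapsto E^A(\xi^*\eta)$. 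Your dualization step $e_A L_X\cong\widetilde{C_X}$ is moreover consistent with the identification $L_X e_A\cong\widetilde{e_A L_X}$ that the paper itself records just before Lemma \ref{lem:iso2}. What your approach buys is economy and conceptual clarity: all inner-product verifications are absorbed into standard Morita theory, and the lemma is exposed as the statement that $C_X$ ``is'' the corner $L_X e_A$. What the paper's approach buys is independence from those general facts: it needs nothing beyond the Jones relation and elementary manipulations with $E^A$, at the cost of a longer explicit computation.
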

\begin{proof} Let $\pi$ be the linear map from $e_A L_X \otimes_{L_X}C_X$ to $A$ defined by
$$
\pi(e_A ae_A b \otimes c)=e_A a e_A b\cdot c =E^A (a)e_A b\cdot c
=E^A (a)E^A (bc)
$$
for any $a, b, c\in C_X$. Clearly $\pi$ is surjective. For any $a, b, c, a_1 , b_1 , c_1 \in C_X$,
\begin{align*}
{}_A \la \pi (e_A ae_A b\otimes c) \, , \, \pi(e_A a_1 e_A b_1 \otimes c_1 ) \ra &=
{}_A \la E^A (a)E^A (bc) \, , \, E^A (a_1 )E^A (b_1 c_1 ) \ra \\
& =E^A (a)E^A (bc)E^A (c_1^* b_1^* )E^A (a_1^* ) .
\end{align*}
On the other hand,
\begin{align*}
{}_A \la e_A a e_A b\otimes c \, , \, e_A a_1 e_A b_1\otimes c_1 \ra & =
{}_A \la e_A ae_A b\cdot {}_{L_X} \la c \, , \, c_1 \ra \, , \, e_A a_1 e_A b_1 \ra \\
& ={}_A \la e_A ae_A b\cdot ce_A c_1^* \, , \, e_A a_1 e_A b_1 \ra \\
& ={}_A \la e_A a e_A bce_A c_1^* \, , \, e_A a_1 e_A b_1 \ra \\
& ={}_A \la e_A E^A (a)E^A (bc)c_1^* \, , \, e_A E^A (a_1 )b_1 \ra \\
& =e_A E^A (a)E^A (bc)c_1^* b_1^* E^A (a_1^* )e_A \\
& =E^A (a)E^A (bc)E^A (c_1^* b_1^* )E^A (a_1^* )e_A .
\end{align*}
Since we identify $A$ with $Ae_A$ by the map $a\in A\mapsto ae_A \in Ae_A$,
$\pi$ preserves the left $A$-valued inner products.
Similarly, we can see that $\pi$ preserves the right $A$-valued inner products.
Thus by the remark
after \cite [Definition 1.1.18]{JT:KK}, we obtain the conclusion.
\end{proof}

\begin{prop}\label{prop:iso4} For any $(M, N)\in\Equi (A, C_X)$,
$$
X\cong \widetilde{M}\otimes_A X\otimes _A M
$$
as involutive $A-A$-equivalence bimodules.
\end{prop}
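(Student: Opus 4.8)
The plan is to use the fact that the Jones projection $e_A$ makes $A$ and $L_X$ strongly Morita equivalent, and to transport the identity of Lemma~\ref{lem:iso1} across this equivalence. Since $e_A L_X \otimes_{L_X} L_X e_A \cong A$ by Lemma~\ref{lem:iso3} (identifying $C_X$ with $L_X e_A$) and $L_X e_A \otimes_A e_A L_X \cong L_X$ because $e_A$ is a full projection in the $C^*$-basic construction $L_X$, the bimodule $e_A L_X$ is an $A$--$L_X$-imprimitivity bimodule with inverse $L_X e_A$. Hence $V \mapsto e_A L_X \otimes_{L_X} V \otimes_{L_X} L_X e_A$ and $W \mapsto L_X e_A \otimes_A W \otimes_A e_A L_X$ are mutually inverse, tensor-preserving correspondences between $A$--$A$- and $L_X$--$L_X$-equivalence bimodules. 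In this language Lemma~\ref{lem:iso2} says precisely that $(L_X)_{\widehat{\alpha}^X}$ corresponds to $X$.

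The step I would isolate is the claim that, under the same correspondence, $N_1$ corresponds to $M$; concretely,
$$
N_1 \otimes_{L_X} L_X e_A \cong L_X e_A \otimes_A M \quad\text{and}\quad e_A L_X \otimes_{L_X} \widetilde{N_1} \cong \widetilde{M} \otimes_A e_A L_X
$$
as $L_X$--$A$- and $A$--$L_X$-equivalence bimodules, respectively (equivalently $e_A L_X \otimes_{L_X} N_1 \otimes_{L_X} L_X e_A \cong M$). Granting this, the proof is a chain of identifications:
\begin{align*}
\widetilde{M} \otimes_A X \otimes_A M
&\cong \widetilde{M} \otimes_A (e_A L_X \otimes_{L_X} (L_X)_{\widehat{\alpha}^X} \otimes_{L_X} L_X e_A) \otimes_A M \\
&\cong (e_A L_X \otimes_{L_X} \widetilde{N_1}) \otimes_{L_X} (L_X)_{\widehat{\alpha}^X} \otimes_{L_X} (N_1 \otimes_{L_X} L_X e_A) \\
&\cong e_A L_X \otimes_{L_X} (\widetilde{N_1} \otimes_{L_X} (L_X)_{\widehat{\alpha}^X} \otimes_{L_X} N_1) \otimes_{L_X} L_X e_A \\
&\cong e_A L_X \otimes_{L_X} (L_X)_{\widehat{\alpha}^X} \otimes_{L_X} L_X e_A \cong X,
\end{align*}
where the first and last isomorphisms are Lemma~\ref{lem:iso2}, the second is the key step, and the fourth is Lemma~\ref{lem:iso1}.

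To prove the key step I would use the crossed-product description set up before Lemma~\ref{lem:iso1}: $L_X = C_X \rtimes_{\alpha^X} \BZ_2 = C_X \rtimes_{\gamma} \BZ_2$ and $N_1 = N \rtimes_{\lambda} \BZ_2$, together with the observations that $A = C_X^{\alpha^X}$ (the fixed points of $\alpha^X$) and that the Jones projection equals $e_A = \frac{1}{2}(1 + u)$ for the canonical self-adjoint unitary $u$ implementing $\alpha^X$. Compressing $N_1$ by $e_A$ on both sides then isolates the fixed-point part, which is $M$, exactly as compressing $L_X$ by $e_A$ yields $A$; the one-sided versions displayed above follow by the same bookkeeping. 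I expect this compression computation---and in particular verifying that the resulting $A$--$A$-equivalence bimodule is $M$ itself, rather than merely some bimodule Morita equivalent to it---to be the main obstacle, since it is where the specific relation between $M$, $N$, $E^M$ and the self-duality $\widehat{\gamma} = \widehat{\alpha}^X$ must be used.

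Finally, I would check that every isomorphism above respects the involution $\natural$. The bimodule $\widetilde{M} \otimes_A X \otimes_A M$ carries the natural involution $(\widetilde{m} \otimes x \otimes m')^{\natural} = \widetilde{m'} \otimes x^{\natural} \otimes m$ induced by that of $X$. Lemmas~\ref{lem:iso1} and~\ref{lem:iso2} are involutive isomorphisms, and the two correspondences are built from the involutive bimodule $(L_X)_{\widehat{\alpha}^X}$ and the dual pair $e_A L_X$, $L_X e_A$, so they intertwine the involution on $\widetilde{M} \otimes_A X \otimes_A M$ with that on $\widetilde{N_1} \otimes_{L_X} (L_X)_{\widehat{\alpha}^X} \otimes_{L_X} N_1$. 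Tracking $\natural$ through the chain then yields the asserted involutive $A$--$A$-equivalence bimodule isomorphism.
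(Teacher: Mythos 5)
Your overall architecture coincides with the paper's: both arguments transport everything through compression by $e_A$, reduce the proposition to the single claim that $e_A L_X \otimes_{L_X} N_1 \otimes_{L_X} L_X e_A \cong M$ as $A$--$A$-equivalence bimodules, and then run the same chain of identifications using Lemmas \ref{lem:iso1}, \ref{lem:iso2} and \ref{lem:iso3} (you merely traverse the chain in the opposite direction). The trouble is that you leave exactly this key claim unproven, and the route you sketch for it is not the paper's and has real obstructions. In the paper the claim is essentially a one-liner: by the very definition of the upward basic construction (\cite[Definition 6.5]{KT4:morita}), $N_1 = C_X \otimes_A M \otimes_A \widetilde{C_X}$, where $C_X$ is regarded as an $L_X$--$A$-equivalence bimodule, so that
$$
e_A L_X \otimes_{L_X} N_1 \otimes_{L_X} L_X e_A
\cong (e_A L_X \otimes_{L_X} C_X )\otimes_A M \otimes_A (e_A L_X \otimes_{L_X} C_X )^{\widetilde{}}
\cong A\otimes_A M\otimes_A A\cong M
$$
by Lemma \ref{lem:iso3}. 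The bimodule $M$ is recovered because it is built into $N_1$ by construction; no fixed-point or crossed-product analysis is needed at this point.

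Your proposed substitute---compress $N_1 = N\rtimes_{\lambda}\BZ_2$ by $e_A = \tfrac{1}{2}(1+u)$ and ``isolate the fixed-point part, which is $M$''---is where the genuine gap lies, and you flag it yourself as ``the main obstacle'' without closing it. Two concrete problems. First, the right-hand compression requires knowing how $e_A$ sits inside the second decomposition $L_X = C_X\rtimes_{\gamma}\BZ_2$; the paper establishes only $\widehat{\gamma}=\widehat{\alpha}^X$ (via the automorphism $\kappa$, and the case $\kappa(e_A)=1-e_A$ genuinely occurs in that dichotomy), not that $e_A$ is $\tfrac{1}{2}(1+u_{\gamma})$ for a canonical $\gamma$-implementing unitary. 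Second, even granting a favorable answer there, the compression computes the fixed-point space $N^{\lambda}=\{n\in N \mid \lambda(n)=n\}$, and nothing in the paper or in your sketch identifies $N^{\lambda}$ with $M$ itself: Theorem \ref{thm:use} ties $\lambda$ to $N$ only through the conclusions $N_1\cong N\rtimes_{\lambda}\BZ_2$ and the strong Morita equivalence of $\alpha^X$ and $\gamma$, and says nothing about the $\lambda$-fixed part of $N$. Identifying a fixed-point space with a prescribed subspace is exactly the kind of statement that required real work in the proof of Theorem \ref{thm:use} (Lemmas \ref{lem:index}, \ref{lem:fix} and \ref{lem:fix2}, via Watatani index and conditional-expectation-uniqueness arguments); you would need an analogous argument one level down, e.g.\ showing that $\tfrac{1}{2}(\id+\lambda)$ is a conditional expectation of $N$ onto $N^{\lambda}$ with respect to $E^A$ and $E^A$ and comparing it with $E^M$ by the uniqueness in \cite[Lemma 4.1]{Kodaka:Picard}. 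None of this is carried out. The fix is simply to use the explicit form $N_1 = C_X\otimes_A M\otimes_A \widetilde{C_X}$, as the paper does.
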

\begin{proof} By Lemmas \ref{lem:iso2}, \ref{lem:iso1},
\begin{align*}
X & \cong e_A L_X \otimes_{L_X}(L_X)_{\widehat{\alpha}^X}\otimes_{L_X} L_X e_A \\
& \cong e_A L_X \otimes_{L_X}\widetilde{N_1}\otimes_{L_X}(L_X)_{\widehat{\alpha}^X}\otimes_{L_X} N_1
\otimes_{L_X} L_X e_A \\
& \cong e_A L_X \otimes_{L_X}\widetilde{N_1}\otimes_{L_X}L_X e_A \otimes_A X
\otimes_A e_A L_X\otimes_{L_X} N_1 \otimes_{L_X} L_X e_A .
\end{align*}
as involutive $A-A$-equivalence bimodules. Since $N_1 =C_X \otimes_A M\otimes_A \widetilde{C_X}$,
$$
e_A L_X \otimes_{L_X} N_1 \otimes_{L_X} L_X e_A
=e_A L_X \otimes_{L_X}C_X \otimes_A M \otimes_A \widetilde{C_X}\otimes_{L_X}L_X e_A ,
$$
where $C_X$ is regarded as an $L_X -A$-equivalence bimodule. Hence by Lemma \ref{lem:iso3},
\begin{align*}
e_A L_X \otimes_{L_X}N_1 \otimes_{L_X}L_X e_A & \cong
e_A L_X \otimes_{L_X}C_X \otimes_A M \otimes_A [e_A L_X \otimes_{L_X} C_X]^{\widetilde{}} \\
& \cong A\otimes_A M\otimes_A A \cong M
\end{align*}
as $A-A$-equivalence bimodules. Therefore,
\begin{align*}
X & \cong[e_A L_X \otimes_{L_X} N_1 \otimes_{L_X} L_X e_A ]^{\widetilde{}}
\otimes_A X \otimes_A [e_A L_X \otimes_{L_X} N_1 \otimes_{L_X} L_X e_A ] \\
& \cong \widetilde{M}\otimes_A X\otimes_A M
\end{align*}
as involutive $A-A$-equivalence bimodules.
\end{proof}

\begin{thm}\label{thm:image} Let $A$ be a unital $C^*$-algebra and $X$ an involutive
$A-A$-equivalence bimodule. Let $A\subset C_X$ be the unital inclusion of unital
$C^*$-algebras induced by $X$. We suppose that $A' \cap C_X =\BC1$.
Let $f_A$ be the homomorphism of $\Pic(A, C_X )$
to $\Pic(A)$ defined by
$$
f_A ([M, N])=[M]
$$
for any $(M, N)\in\Equi (A, C_X )$. Then the image of $f_A$ is:
\begin{align*}
\Ima f_A =\{[M]\in\Pic(A) \, | \,  & \text{$M$ is an $A-A$-equivalence bimodule with} \\
& \text{$X\cong\widetilde{M}\otimes_A X\otimes_A M$} \\
& \text{as involutive $A-A$-equivalence bimodules}\} .
\end{align*}
\end{thm}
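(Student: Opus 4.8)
The plan is to establish set equality by proving the two inclusions separately, each of which is essentially packaged in an earlier result of the paper.

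For the inclusion $\Ima f_A \subseteq \{\cdots\}$, I would take an arbitrary $[M]\in\Ima f_A$. By the definition of $f_A$ there is a pair $(M, N)\in\Equi(A, C_X)$ with $f_A([M, N])=[M]$, so $M$ arises as the $A$-$A$-equivalence bimodule component of an element of $\Equi(A, C_X)$. This is exactly the situation covered by Proposition \ref{prop:iso4}, which yields
$$
X\cong\widetilde{M}\otimes_A X\otimes_A M
$$
as involutive $A$-$A$-equivalence bimodules. Hence $[M]$ lies in the right-hand set.

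For the reverse inclusion, I would take $[M]$ in the right-hand set, so that $M$ is an $A$-$A$-equivalence bimodule satisfying $X\cong\widetilde{M}\otimes_A X\otimes_A M$ as involutive $A$-$A$-equivalence bimodules. This is precisely the hypothesis recalled at the beginning of Section \ref{sec:image}: under it, the proof of \cite[Lemma 5.11]{Kodaka:bundle} produces the $C_X$-$C_X$-equivalence bimodule $C_M$ induced by $M$ together with a pair $(M, C_M)\in\Equi(A, C_X)$. Applying $f_A$ gives $f_A([M, C_M])=[M]$, so $[M]\in\Ima f_A$. Combining the two inclusions yields the asserted description of $\Ima f_A$.

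I do not expect a genuine obstacle here, since the substantive work has already been carried out: Proposition \ref{prop:iso4} supplies the necessity of the involutive isomorphism $X\cong\widetilde{M}\otimes_A X\otimes_A M$, while the $C_M$-construction supplies its sufficiency. The only remaining point to be careful about is that the displayed condition be read as a property of the class $[M]$ (i.e. independent of the chosen representative); this is unproblematic because in each direction the isomorphism is produced for the very representative $M$ that realizes the class, so no representative-independence argument is actually needed for the set equality.
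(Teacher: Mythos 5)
Your proposal is correct and matches the paper's argument: the paper also deduces the theorem immediately from Proposition \ref{prop:iso4} (giving $\Ima f_A$ contained in the stated set) together with the $C_M$-construction from the proof of \cite[Lemma 5.11]{Kodaka:bundle}, which produces $(M, C_M)\in\Equi(A, C_X)$ for the reverse inclusion. No gaps; your two-inclusion write-up is just a slightly more explicit rendering of the paper's one-line proof.
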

\begin{proof} This is immediate by Proposition \ref{prop:iso4} and the proof of \cite [Lemma 5.11]
{Kodaka:bundle}.
\end{proof}

\section{A homomorphism}\label{sec:homo} In this section, we shall construct a homomorphism
$g$ of $\Ima f_A$ to $\Pic(A, C_X )$ with $f_A \circ g =\id$ on $\Ima f_A$. Let $M$ be an
$A-A$-equivalence bimodule with $X\cong \widetilde{M}\otimes_A X\otimes_A M$ as
involutive $A-A$-equivalence bimodules. Let $\Phi_M$ be an involutive $A-A$-equivalence
bimodule isomorphism of $\widetilde{M}\otimes_A X\otimes_A M$ onto $X$ and let $\widetilde{\Phi_M}$
be the involutive $A-A$-equivalence bimodule isomorphism of
$\widetilde{M}\otimes_A \widetilde{X}\otimes_A M$ onto $\widetilde{X}$ induced by
$\Phi_M$ (See \cite [Section 5]{Kodaka:bundle}). Let $\Psi_M$ and $\widetilde{\Psi_M}$ be the $A-A$-
equivalence bimodule isomorphism of $X\otimes_A M$ onto $M\otimes_A X$ and the $A-A$-equivalence
bimodule isomorphism of $\widetilde{X}\otimes_A M$ onto $M\otimes_A \widetilde{X}$ indced by
$\Phi_M$ and $\widetilde{\Phi_M}$, which are defined in \cite [Section 5]{Kodaka:bundle}, respectively.
Let $C_M$ be the linear span of the set
$$
C_M^X =\{\begin{bmatrix} m_1 & m_2 \otimes x \\
m_2 \otimes \widetilde{x}^{\natural} & m_1 \end{bmatrix} \, | \, m_1 , m_2 \in M, x\in X \} .
$$
Also, let $C_M^1$ be the linear span of the set
$$
{}^X C_M =\{\begin{bmatrix} m_1 & x \otimes m_2 \\
\widetilde{x}^{\natural}\otimes m_2 & m_1 \end{bmatrix} \, | \, m_1 , m_2 \in M, x\in X \} .
$$
As mentioned in \cite [Section 5]{Kodaka:bundle}, we identify $C_M$ with $C_M^1$ by $\Psi_M$ and
$\widetilde{\Psi_M}$. In the same way as in \cite [Section 5]{Kodaka:bundle}, we define the left
$C_X$-action and the right $C_X$-action on $C_M$ as follows:
\begin{align*}
& \begin{bmatrix} a & x \\
\widetilde{x}^{\natural} & a \end{bmatrix}\cdot \begin{bmatrix} m_1 & m_2 \otimes y \\
m_2 \otimes \widetilde{y}^{\natural} & m_1 \end{bmatrix} \\
& =\begin{bmatrix} a\otimes m_1 +x\otimes m_2 \otimes\widetilde{y}^{\natural} & a\otimes m_2 \otimes y
+x\otimes m_1 \\
\widetilde{x}^{\natural} \otimes m_1 +a\otimes m_2 \otimes\widetilde{y}^{\natural} &
\widetilde{x}^{\natural}\otimes m_2 \otimes y +a\otimes m_1 \end{bmatrix} , \\
& \begin{bmatrix} m_1 & m_2 \otimes y \\
m_2 \otimes \widetilde{y}^{\natural} & m_1 \end{bmatrix} \cdot \begin{bmatrix} a & x \\
\widetilde{x}^{\natural} & a \end{bmatrix} \\
& =\begin{bmatrix}m_1 \otimes a+m_2 \otimes y\otimes \widetilde{x}^{\natural} &
m_1 \otimes x+m_2 \otimes y\otimes a \\
m_2 \otimes\widetilde{y}^{\natural} \otimes a +m_1 \otimes\widetilde{x}^{\natural} &
m_2 \otimes\widetilde{y}^{\natural}\otimes x+m_1 \otimes a \end{bmatrix} .
\end{align*}
for any $a\in A$, $m_1, m_2 \in M$, $x, y\in X$. But we identify $A\otimes_A M$, $M\otimes_A A$
and $X\otimes_A \widetilde{X}$, $\widetilde{X}\otimes_A X$ with $M$ and $A$ by the isomorphisms
defined by
\begin{align*}
& a\otimes m\in A\otimes_A M\mapsto a\cdot m\in M , \\
& m\otimes a \in M\otimes_A A\mapsto m\cdot a\in M , \\
& x\otimes \widetilde{y}\in X\otimes_A \widetilde{X}\mapsto {}_A \la x, y \ra\in A , \\
& \widetilde{x}\otimes y\in \widetilde{X}\otimes_A X \mapsto \la x, y \ra_A \in A ,
\end{align*}
respectively and we identify $X\otimes_A M$ and $\widetilde{X}\otimes_A M$ with $M\otimes_A X$ and
$M\otimes_A \widetilde{X}$ by $\Psi_M$ and $\widetilde{\Psi_M}$, respectively.
By the above identifications, the right hand-sides of the above equations are in $C_M$.
Before we define a left $C_X$-valued inner product and a right $C_X$-valued inner product on $C_M$,
we define a conjugate linear map on $C_M$,
$$
\begin{bmatrix} m_1 & m_2 \otimes x \\
m_2 \otimes\widetilde{x^{\natural}} & m_1 \end{bmatrix}\in C_m \mapsto
\begin{bmatrix} m_1 & m_2 \otimes x \\
m_2 \otimes\widetilde{x^{\natural}} & m_1 \end{bmatrix}^{\widetilde{}} \in C_m
$$
by
$$
\begin{bmatrix} m_1 & m_2 \otimes x \\
m_2 \otimes\widetilde{x^{\natural}} & m_1 \end{bmatrix}^{\widetilde{}}
=\begin{bmatrix} \widetilde{m_1} & x^{\natural}\otimes \widetilde{m_2} \\
\widetilde{x}\otimes\widetilde{m_2} & \widetilde{m_1} \end{bmatrix}
$$
for any $m_1 , m_2 \in M$, $x\in X$.
We define the left $C_X$-valued inner product and the right $C_X$-valued inner product as follows:
\begin{align*}
& {}_{C_X} \la \begin{bmatrix} m_1 & m_2 \otimes x \\
m_2 \otimes \widetilde{x}^{\natural} & m_1 \end{bmatrix} \, , \,
\begin{bmatrix} n_1 & n_2 \otimes y \\
n_2 \otimes\widetilde{y}^{\natural} & n_1 \end{bmatrix} \ra \\
& =\begin{bmatrix} m_1 & m_2 \otimes x \\
m_2 \otimes \widetilde{x}^{\natural} & m_1 \end{bmatrix}\cdot
\begin{bmatrix} n_1 & n_2 \otimes y \\
n_2 \otimes\widetilde{y}^{\natural} & n_1 \end{bmatrix}^{\widetilde{}} \\
& =\begin{bmatrix} m_1 \otimes \widetilde{n_1}+m_2 \otimes x\otimes\widetilde{y}\otimes\widetilde{n_2} &
m_1 \otimes y^{\natural}\otimes\widetilde{n_2}+m_2 \otimes x \otimes\widetilde{n_1} \\
m_2 \otimes \widetilde{x}^{\natural}\otimes\widetilde{n_1}+m_1 \otimes\widetilde{y}\otimes\widetilde{n_2} &
m_2 \otimes \widetilde{x}^{\natural}\otimes y^{\natural}\otimes\widetilde{n_2}+m_1 \otimes\widetilde{n_1}
\end{bmatrix} , \\
&  \la \begin{bmatrix} m_1 & m_2 \otimes x \\
m_2 \otimes \widetilde{x}^{\natural} & m_1 \end{bmatrix} \, , \,
\begin{bmatrix} n_1 & n_2 \otimes y \\
n_2 \otimes\widetilde{y}^{\natural} & n_1 \end{bmatrix} \ra_{C_X} \\
& =\begin{bmatrix} m_1 & m_2 \otimes x \\
m_2 \otimes \widetilde{x}^{\natural} & m_1 \end{bmatrix}^{\widetilde{}} \cdot
\begin{bmatrix} n_1 & n_2 \otimes y \\
n_2 \otimes\widetilde{y}^{\natural} & n_1 \end{bmatrix} \\
& =\begin{bmatrix} \widetilde{m_1}\otimes n_1 +x^{\natural}\otimes\widetilde{m_2}\otimes n_2\otimes
\widetilde{y}^{\natural} &
\widetilde{m_1}\otimes n_2 \otimes y+x^{\natural}\otimes\widetilde{m_2}\otimes n_1 \\
\widetilde{x}\otimes \widetilde{m_2} \otimes n_1 +\widetilde{m_1}\otimes n_2 \otimes\widetilde{y}^{\natural} &
\widetilde{x}\otimes \widetilde{m_2} \otimes n_2 \otimes y+\widetilde{m_1}\otimes n_1 \end{bmatrix} ,
\end{align*}
for any $m_1 , m_2 , n_1 , n_2 \in M$, $x, y\in X$, where we regard the tensor product
as a product on $C_M$ in the formal manner. We denote it by $``\cdot "$.
Also, we identify $A\otimes_A M$, $M\otimes_A A$ and
$X\otimes_A \widetilde{X}$, $\widetilde{X}\otimes_A X$ with $M$ and $A$ by the same isomorphisms as
above and we identify $X\otimes_A M$ and $\widetilde{X}\otimes_A M$ with $M\otimes_A X$ and
$M\otimes_A \widetilde{X}$ by $\Psi_M$ and $\widetilde{\Psi_M}$. By the above identifications, we can
define the left $C_X$-valued and the right $C_X$-valued inner products. In the same way as above,
we can define the left $C_X$-action and the right $C_X$-valued action on $C_M^1$ and the left
$C_X$-valued inner product and the right $C_X$-valued inner product on $C_M^1$. Since we identify
$C_M$ with $C_M^1$ by $\Psi_M$ and $\widetilde{\Psi_M}$, we can see that $C_M$ and $C_M^1$
are $C_X - C_X$-equivalence bimodules by \cite [Lemma 5.10]{Kodaka:bundle} and that each of them
agrees with the other by routine computations (See \cite [Section 5]{Kodaka:bundle}.
We identify $C_M$ with $C_M^1$ as $C_X -C_X$-equivalence bimodules by the isomorphisms
$\Psi_M$ and $\widetilde{\Psi_M}$ and we denote them by
the same symbol $C_M$. Furthermore, by \cite [Lemma 5.11]{Kodaka:bundle},
$(M, C_M)\in\Equi(A, C_C )$.
\par
Let $\Phi_M '$ be another involutive $A-A$-equivalence bimodule isomorphism of
$\widetilde{M}\otimes_A X\otimes_A M$ onto $X$ and let $\widetilde{\Phi_M}'$ be the involutive
$A-A$-equivalence bimodule isomorphism of $\widetilde{M}\otimes_A \widetilde{X}\otimes_A M$ onto
$\widetilde{X}$ induced by $\Phi_M '$. Let $\Psi_M '$ be the $A-A$-equivalence bimodule isomorphism of
$X\otimes_A M$ onto $M\otimes_A X$ induced by $\Phi_M '$ and let $\widetilde{\Psi_M}'$ be the
$A-A$-equivalence bimodule isomorphism of $\widetilde{X}\otimes_A M$ onto $M\otimes_A \widetilde{X}$
induced by $\widetilde{\Phi_M}'$. Then we can identify $C_M$ with $C_M^1$ by the isomorphisms
$ \Psi_M '$ and $\widetilde{\Psi_M '}$. Hence we can obtain an element in $\Equi (M, C_X )$ by
the above identification. We denote the element by $(M, C_M ' )$.

\begin{lemma}\label{lem:def1} With the above notation, $[M, C_M ]=[M, C_M ' ]$ in $\Pic(A, C_X )$.
\end{lemma}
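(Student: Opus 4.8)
The plan is to exhibit a $C_X$--$C_X$-equivalence bimodule isomorphism $\Phi$ of $C_M$ onto $C_M'$ whose restriction to the diagonal copy of $M$ is an $A$--$A$-equivalence bimodule automorphism of $M$; by the definition of the equivalence relation on $\Equi(A, C_X )$ this is exactly what is needed to conclude $[M, C_M ]=[M, C_M ' ]$. The first and conceptually decisive step is to compare the two chosen isomorphisms. Both $\Phi_M$ and $\Phi_M '$ are involutive $A$--$A$-equivalence bimodule isomorphisms of $\widetilde{M}\otimes_A X\otimes_A M$ onto $X$, so $\Phi_M ' \circ\Phi_M^{-1}$ is an involutive $A$--$A$-equivalence bimodule automorphism of $X$, that is, an element of ${}_A \Aut_A^{\natural}(X)$. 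Since $A' \cap C_X =\BC 1$ forces ${}_A \Aut_A^{\natural}(X)\cong\BT 1$ (the computation preceding Proposition \ref{prop:ker3}, via Corollary \ref{cor:morphism}), there is a unique $\lambda\in\BT$ with $\Phi_M ' =\lambda\Phi_M$.

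Next I would track how this scalar is inherited by the induced maps of \cite [Section 5]{Kodaka:bundle}. The isomorphism $\Psi_M$ is obtained from $\Phi_M$ by tensoring with the identity on $M$ and applying the ($\lambda$-independent) identification $M\otimes_A \widetilde{M}\cong A$, so it depends linearly on $\Phi_M$ and hence $\Psi_M ' =\lambda\Psi_M$. The dual isomorphism $\widetilde{\Phi_M}$, and therefore $\widetilde{\Psi_M}$, is induced conjugate-linearly, so $\widetilde{\Psi_M '}=\overline{\lambda}\,\widetilde{\Psi_M}$. These scalings are the only data in which the constructions of $C_M$ and $C_M '$ differ.

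I would then define $\Phi\colon C_M\to C_M '$ to be the identity on the diagonal entries and multiplication by $\lambda$ on the $X$-component, namely
$$
\Phi\left(\begin{bmatrix} m_1 & m_2 \otimes x \\
m_2 \otimes\widetilde{x}^{\natural} & m_1 \end{bmatrix}\right)
=\begin{bmatrix} m_1 & m_2 \otimes\lambda x \\
m_2 \otimes\widetilde{(\lambda x)}^{\natural} & m_1 \end{bmatrix}
$$
for any $m_1 , m_2 \in M$, $x\in X$. Because $\widetilde{(\lambda x)}^{\natural}=\lambda\widetilde{x}^{\natural}$, this scales both off-diagonal entries by $\lambda$ and so produces a genuine element of $C_M '$; as the diagonal entries are untouched, $\Phi|_M =\id_M$.

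The remaining task, which I expect to be the main obstacle, is to verify that $\Phi$ intertwines the left and right $C_X$-actions, the two $C_X$-valued inner products, and the conjugate-linear involution used to define them. This is a direct but careful accounting of the factor $\lambda$: in each entry of the action and inner-product formulas one replaces $\Psi_M$ by $\Psi_M ' =\lambda\Psi_M$ and $\widetilde{\Psi_M}$ by $\widetilde{\Psi_M '}=\overline{\lambda}\,\widetilde{\Psi_M}$, so every identification $X\otimes_A M\cong M\otimes_A X$ contributes a factor $\lambda$ and every identification $\widetilde{X}\otimes_A M\cong M\otimes_A \widetilde{X}$ a factor $\overline{\lambda}$; these are then exactly compensated by the factors built into $\Phi$ through the scaling of the off-diagonal entries, so that all powers of $\lambda$ cancel. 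Granting this bookkeeping, $\Phi$ is an involution-preserving $C_X$--$C_X$-equivalence bimodule isomorphism fixing $M$, whence $(M, C_M )\sim (M, C_M ' )$ and therefore $[M, C_M ]=[M, C_M ' ]$ in $\Pic(A, C_X )$.
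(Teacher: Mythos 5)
Your reduction of the problem to a single scalar is sound as far as it goes: since $A'\cap A\subseteq A'\cap C_X=\BC 1$, every $A$--$A$-equivalence bimodule automorphism of $X$ is a scalar multiple of the identity, so $\Phi_M'=\lambda\Phi_M$ for some $\lambda\in\BT$, and your map $\Phi$ is then the same isomorphism the paper assembles from $\Psi_M$, $\widetilde{\Psi_M}$, $\Psi_M'$, $\widetilde{\Psi_M}'$. The genuine gap is in the step you explicitly grant yourself, the assertion that ``all powers of $\lambda$ cancel'': for a general $\lambda\in\BT$ they do not. Take $c=\begin{bmatrix} a & x \\ \widetilde{x}^{\natural} & a\end{bmatrix}\in C_X$ and $T=\begin{bmatrix} m_1 & m_2\otimes y \\ m_2\otimes\widetilde{y}^{\natural} & m_1\end{bmatrix}\in C_M$, and compare the $(1,1)$-entries of $\Phi(c\cdot T)$ (action computed in $C_M$) with those of $c\cdot\Phi(T)$ (action computed in $C_M'$). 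In the latter, the term obtained from $x$ against the lower-left entry of $\Phi(T)$ carries one factor $\lambda$ from your scaling of that entry and a second factor $\lambda$ from the identification $\Psi_M'=\lambda\Psi_M$ used to interpret $x\otimes m_2\otimes\widetilde{y}^{\natural}$; in the former, the $(1,1)$-entry is untouched by $\Phi$. The two sides therefore differ by $\lambda^2$ in that term. Similarly, the $(2,1)$-entries differ by $\overline{\lambda}$ versus $\lambda$ (your rule $\widetilde{\Psi_M}'=\overline{\lambda}\widetilde{\Psi_M}$ against your scaling by $\lambda$), and the off-diagonal entries of the left $C_X$-valued inner product acquire an uncancelled $\overline{\lambda}^2$. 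So your $\Phi$ intertwines the two structures if and only if $\lambda^2=1$, which your argument never establishes.

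The missing idea is that $\lambda$ is automatically real. Indeed $\Phi_M'\circ\Phi_M^{-1}=\lambda\,\id_X$ is a composition of involutive isomorphisms, hence commutes with $\natural$; since $\natural$ is conjugate linear, $\lambda x^{\natural}=(\lambda x)^{\natural}=\overline{\lambda}\,x^{\natural}$ for all $x$, so $\lambda=\pm 1$. With this in place your two scaling rules agree, every residual factor is $\lambda^2=1$, and the verification does close, giving a correct proof. Two related cautions. You should not lean on the paper's assertion that ${}_A\Aut_A^{\natural}(X)\cong\BT 1$: by the same conjugate-linearity computation, a non-real scalar multiple of an involutive map is not involutive, so the involutive automorphisms form $\{\pm\id_X\}$; it is precisely this assertion that led you to run the bookkeeping for arbitrary $\lambda\in\BT$, where it fails. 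Moreover, your rule $\widetilde{\Psi_M}'=\overline{\lambda}\,\widetilde{\Psi_M}$ is, for non-real $\lambda$, inconsistent with the formula $\widetilde{\Psi_M}(\widetilde{x}^{\natural}\otimes m)=\sum_i u_i\otimes\Phi_M(\widetilde{u_i}\otimes x\otimes m)^{\widetilde{}\natural}$ appearing in the proof of Lemma \ref{lem:def3}, which is linear in $\Phi_M$ (the composite of the two conjugate-linear operations, $\natural$ and the dual-module map, is linear); this tension is another sign that only $\lambda=\pm1$ can occur.
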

\begin{proof} We can construct a $C_X -C_X$-equivalence bimodule isomorphism using the $A-A$-
equivalence isomorphisms $\Psi_M$, $\widetilde{\Psi_M}$, $\Psi_M '$, $\widetilde{\Psi_M}'$.
Hence $C_M$ and $C_M '$ are isomorphic as $C_X -C_X$-equivalence bimodules by the
$C_X -C_X$-equivalence bimodule isomorphism, which leaves the diagonal elements in $C_M$ and
$C_M '$ invariant. Thus $[M, C_M ]=[M, C_M ' ]$ in $\Pic(A, C_X )$.
\end{proof}

Let $M_1$ be another $A-A$-equivalence bimodule with $\widetilde{M_1}\otimes_A X\otimes_A M_1
\cong X$ as involutive $A-A$-equivalence bimodules. Let $[M_1 , C_{M_1}]$ be the element in
$\Pic (A, C_X )$ induced by $M_1$ in the above.

\begin{lemma}\label{lem:def2} With the above notation, we suppose that $M$ and $M_1$ are isomorphic
as $A-A$-equivalence bimodules. Then $[M, C_M ]=[M_1 , C_{M_1}]$ in $\Pic(A, C_X )$.
\end{lemma}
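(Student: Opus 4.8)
The plan is to manufacture, from a given $A-A$-equivalence bimodule isomorphism $\Theta\colon M\to M_1$, a $C_X-C_X$-equivalence bimodule isomorphism $\Phi\colon C_M\to C_{M_1}$ whose restriction to the diagonal copy of $M$ agrees with $\Theta$. By the definition of the relation $``\sim"$ in $\Equi(A, C_X)$, the existence of such a $\Phi$ is exactly what is needed to conclude $[M, C_M]=[M_1, C_{M_1}]$ in $\Pic(A, C_X)$.

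The first step is to neutralize the dependence of $C_M$ on its auxiliary data. Recall that $C_M$ is built from a chosen involutive isomorphism $\Phi_M\colon\widetilde{M}\otimes_A X\otimes_A M\to X$ together with the induced rearrangements $\Psi_M\colon X\otimes_A M\to M\otimes_A X$ and $\widetilde{\Psi_M}\colon\widetilde{X}\otimes_A M\to M\otimes_A\widetilde{X}$, and $C_{M_1}$ is built from $\Phi_{M_1}$, $\Psi_{M_1}$, $\widetilde{\Psi_{M_1}}$ in the same way. Since $\Theta$ is an equivalence bimodule isomorphism it induces a dual isomorphism $\widetilde{\Theta}\colon\widetilde{M}\to\widetilde{M_1}$, $\widetilde{m}\mapsto\widetilde{\Theta(m)}$. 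Exploiting the freedom granted by Lemma \ref{lem:def1}, I would take as the auxiliary isomorphism for $M$ the composite
$$
\Phi_M'=\Phi_{M_1}\circ(\widetilde{\Theta}\otimes\id_X\otimes\Theta),
$$
which is again an involutive $A-A$-equivalence bimodule isomorphism of $\widetilde{M}\otimes_A X\otimes_A M$ onto $X$ because each factor respects the module actions and the involution. Writing $(M, C_M')$ for the resulting element of $\Equi(A, C_X)$, Lemma \ref{lem:def1} gives $[M, C_M']=[M, C_M]$, so it suffices to prove $[M, C_M']=[M_1, C_{M_1}]$.

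With this compatible choice the induced rearrangements intertwine with $\Theta$. Concretely, I expect the square
\[
\begin{CD}
X\otimes_A M @>{\Psi_M'}>> M\otimes_A X \\
@V{\id_X\otimes\Theta}VV @VV{\Theta\otimes\id_X}V \\
X\otimes_A M_1 @>{\Psi_{M_1}}>> M_1\otimes_A X
\end{CD}
\]
and its tilde analogue (with $\widetilde{\Psi_M'}$, $\widetilde{\Psi_{M_1}}$ and $\widetilde{X}$ in place of $\Psi$ and $X$) to commute, since $\Psi_M'$ and $\Psi_{M_1}$ arise from $\Phi_M'$ and $\Phi_{M_1}$ by the same functorial recipe and these two isomorphisms differ precisely by $\widetilde{\Theta}\otimes\id_X\otimes\Theta$. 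Granting this, I would define
$$
\Phi\left(\begin{bmatrix} m_1 & m_2\otimes x \\ m_2\otimes\widetilde{x}^{\natural} & m_1\end{bmatrix}\right)
=\begin{bmatrix}\Theta(m_1) & \Theta(m_2)\otimes x \\ \Theta(m_2)\otimes\widetilde{x}^{\natural} & \Theta(m_1)\end{bmatrix}
$$
for $m_1, m_2\in M$ and $x\in X$, and verify that $\Phi$ is a well-defined $C_X-C_X$-equivalence bimodule isomorphism of $C_M'$ onto $C_{M_1}$ preserving the conjugate linear map $\widetilde{\phantom{m}}$, the left and right $C_X$-actions, and both $C_X$-valued inner products.

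The genuinely laborious part, and the main obstacle, is this last verification. Every action and inner-product formula for $C_M'$ and $C_{M_1}$ is written through the identifications $A\otimes_A M\cong M$, $M\otimes_A A\cong M$, $X\otimes_A\widetilde{X}\cong A$, $\widetilde{X}\otimes_A X\cong A$ together with $\Psi_M'$, $\widetilde{\Psi_M'}$ (respectively $\Psi_{M_1}$, $\widetilde{\Psi_{M_1}}$); to push $\Phi$ through them one repeatedly invokes that $\Theta$ and $\widetilde{\Theta}$ are $A-A$-equivalence bimodule isomorphisms, hence respect the module actions and both $A$-valued inner products, and that the two rearrangement squares above commute. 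Once these compatibilities are recorded the computation is purely mechanical. Finally, restricting $\Phi$ to the elements with $m_2=0$ recovers exactly $\Theta$ on the diagonal copy of $M$, so $(M, C_M')\sim(M_1, C_{M_1})$ and therefore $[M, C_M]=[M, C_M']=[M_1, C_{M_1}]$ in $\Pic(A, C_X)$.
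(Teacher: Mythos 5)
Your proposal is correct and follows essentially the same route as the paper: the paper likewise transports the auxiliary involutive isomorphism along the given bimodule isomorphism (there, $\pi\colon M_1\to M$ and $\Phi_M\circ(\widetilde{\pi}\otimes\id_X\otimes\pi)$, the mirror image of your $\Phi_{M_1}\circ(\widetilde{\Theta}\otimes\id_X\otimes\Theta)$), invokes Lemma \ref{lem:def1} to neutralize the choice of that isomorphism, and concludes via the entrywise-induced $C_X-C_X$-equivalence bimodule isomorphism. Your write-up merely makes explicit the intertwining of $\Psi_M'$ with $\Psi_{M_1}$ and the resulting verification, which the paper compresses into ``by the definitions of $(M, C_M)$, $(M_1, C_{M_1})$ and Lemma \ref{lem:def1}.''
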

\begin{proof}
Since $M\cong M_1$ as $A-A$-equivalence bimodules, there is an $A-A$-equivalence bimodule
isomorphism $\pi$ of $M_1$ onto $M$. Let $\Phi_M$ be an involutive $A-A$-equivalence
bimodules isomorphism of $\widetilde{M}\otimes_A X\otimes_A M$ onto $X$. Then
$\Phi_M \circ(\widetilde{\pi}\otimes \id_X \otimes \pi)$ is an involutive $A-A$-equivalence bimodule
isomorphism of $\widetilde{M_1}\otimes_A X\otimes_A M_1$ onto $X$, where $\widetilde{\pi}$ is the
$A-A$-equivalence bimodule isomorphism of $\widetilde{M_1}$ onto $M$ defined by
$$
\widetilde{\pi}(\widetilde{m})=\widetilde{\pi(m)}
$$
for any $m\in M$. Let $[M, C_M ]$ and $[M_1 , C_{M_1}]$ be the element in $\Pic(A, C_X )$ induced by
$\Phi_M$ and $\Phi_M \circ(\widetilde{\pi}\otimes\id_X \otimes\pi)$. Let $(M, C_M )$ be the element in
$\Equi (A, C_X )$ obtained by using the isomorphism $\Phi_M $ and let $(M_1 , C_{M_1})$ be the element
in $\Equi(A, C_X )$ obtained by using the isomorphism $\Phi_M \circ (\widetilde{\pi}\otimes\id\otimes\pi)$.
Then by the definitions of $(M, C_M )$, $(M_1 , C_{M_1})$ and Lemma \ref{lem:def1}, we obtain that
$[M, C_M ]=[M_1 , C_{M_1}]$ in $\Pic(A, C_X )$. 
\end{proof}

Let $g$ be the map from $\Ima f_A$ to $\Pic(A, C_X )$ defined by
$$
g([M])=[M, C_M ]
$$
for any $[M]\in \Ima f_A$. By Lemmas \ref{lem:def1} and \ref{lem:def2}, $g$ is well-defined.
\par
Let $M$ and $K$ be $A-A$-equivalence bimodules with $\widetilde{M}\otimes_A X\otimes_A M
\cong X$ and $\widetilde{K}\otimes_A X\otimes_A K\cong X$ as
$A-A$-equivalence bimodules, respectively. Let $(M, C_M )$ and $(K, C_K)$ be the elements
in $\Equi (A, C_X )$ induced by $M$ and $K$, respectively. Also, let $(M\otimes_A K, C_{M\otimes_A K})$
be the element in $\Equi (A, C_X )$ induced by $M\otimes_A K$.

\begin{lemma}\label{lem:def3} With the above notation, $C_M \otimes_{C_X}C_K \cong C_{M\otimes_A K}$
as $C_X -C_X$-equivalence bimodules.
\end{lemma}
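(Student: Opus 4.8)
The plan is to write down an explicit isomorphism given by the ``matrix product'' of the two bimodules. First I would use Lemma~\ref{lem:def1}: since the class $[M\otimes_A K, C_{M\otimes_A K}]$ does not depend on the chosen involutive isomorphism $\widetilde{M\otimes_A K}\otimes_A X\otimes_A(M\otimes_A K)\cong X$, I may build $C_{M\otimes_A K}$ from the isomorphism $\Phi_{M\otimes_A K}$ induced by $\Phi_M$ and $\Phi_K$; then the identifications $\Psi_{M\otimes_A K}$ and $\widetilde{\Psi_{M\otimes_A K}}$ defining $C_{M\otimes_A K}$ are the composites of those coming from $M$ and from $K$, which keeps all the identifications compatible. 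With this choice fixed I would define a linear map $\pi$ on elementary tensors by
$$
\pi\left(\begin{bmatrix} m_1 & m_2\otimes x \\ m_2\otimes\widetilde{x^{\natural}} & m_1\end{bmatrix}\otimes\begin{bmatrix} k_1 & k_2\otimes y \\ k_2\otimes\widetilde{y^{\natural}} & k_1\end{bmatrix}\right)=\begin{bmatrix} m_1 & m_2\otimes x \\ m_2\otimes\widetilde{x^{\natural}} & m_1\end{bmatrix}\cdot\begin{bmatrix} k_1 & k_2\otimes y \\ k_2\otimes\widetilde{y^{\natural}} & k_1\end{bmatrix},
$$
where the product is computed entrywise with tensor products over $A$ and then reduced into $C_{M\otimes_A K}$ by the identifications $A\otimes_A(-)\cong(-)$, $X\otimes_A\widetilde{X}\cong A$, $\widetilde{X}\otimes_A X\cong A$ together with $\Psi_K$ and $\widetilde{\Psi_K}$ --- exactly the identifications already used in Section~\ref{sec:homo} to define the $C_X$-actions on $C_M$.

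Next I would verify that $\pi$ is well defined, which has two parts. The first is that the formal product genuinely lands in $C_{M\otimes_A K}$: that the two diagonal entries coincide and that the two off-diagonal entries are $\natural$-conjugate in the required sense. This is where the involutive structure is essential; the matching of the diagonal entries comes down to the identity ${}_A\langle x, y^{\natural}\rangle=\langle x^{\natural}, y\rangle_A$ of Condition~(3) together with the $\natural$-compatibility of $\Phi_M$ and $\Phi_K$, and the off-diagonal matching uses $\Psi_K$, $\widetilde{\Psi_K}$ in the same way. The second part is that $\pi$ respects the balancing over $C_X$, i.e.\ $\pi((\xi\cdot c)\otimes\zeta)=\pi(\xi\otimes(c\cdot\zeta))$ for $c\in C_X$; this is immediate from associativity of the formal matrix product, since the left and right $C_X$-actions on $C_M$ and $C_K$ are given by that same product.

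Then I would check that $\pi$ preserves the inner products. On each of $C_M$, $C_K$ and $C_{M\otimes_A K}$ the left and right $C_X$-valued inner products are $\xi\cdot\widetilde{\eta}$ and $\widetilde{\xi}\cdot\eta$ for the same formal product and the same conjugate-linear map $\widetilde{\phantom{x}}$, so the whole verification reduces to the formal identities $\widetilde{\xi\cdot\zeta}=\widetilde{\zeta}\cdot\widetilde{\xi}$ and associativity. Granting these, ${}_{C_X}\langle\pi(\xi\otimes\zeta),\pi(\xi'\otimes\zeta')\rangle=\xi\cdot\zeta\cdot\widetilde{\zeta'}\cdot\widetilde{\xi'}={}_{C_X}\langle\xi\cdot{}_{C_X}\langle\zeta,\zeta'\rangle,\xi'\rangle$, which is precisely the left inner product on $C_M\otimes_{C_X}C_K$, and the right inner product is handled symmetrically. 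Surjectivity is clear: taking $\xi$ diagonal (i.e.\ $m_2=0$) and $\zeta$ arbitrary, the product has diagonal entry $m\otimes k_1$ and off-diagonal entry $(m\otimes k_2)\otimes y$, and as $m,k_1,k_2,y$ vary these generate a dense subspace of $C_{M\otimes_A K}$; since an inner-product preserving map has closed range, $\pi$ is onto. Finally, by the remark after \cite[Definition 1.1.18]{JT:KK}, $\pi$ is a $C_X-C_X$-equivalence bimodule isomorphism, giving $C_M\otimes_{C_X}C_K\cong C_{M\otimes_A K}$.

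The main obstacle will be the first well-definedness check: making the formal matrix product actually land in $C_{M\otimes_A K}$ requires careful bookkeeping of the several identifications ($\Psi_K$, $\widetilde{\Psi_K}$, and the inner-product identifications) and, crucially, using Condition~(3) and the $\natural$-compatibility of $\Phi_M$, $\Phi_K$ to match both the diagonal and the off-diagonal pairs. Once this is in place, the balancing, the inner-product computation and surjectivity are formal.
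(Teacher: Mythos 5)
Your proposal is correct and follows essentially the same route as the paper: the paper also defines $\pi$ by the formal matrix product of an element of $C_M^X$ with an element of ${}^X C_K$ (your writing the second factor in the $k_2\otimes y$ form is immaterial, since $C_K$ and $C_K^1$ are identified via $\Psi_K$, $\widetilde{\Psi_K}$), verifies that the product lands in $C_{M\otimes_A K}$ using exactly the ingredients you name --- the identity ${}_A\la x, y^{\natural}\ra=\la x^{\natural}, y\ra_A$ and the expressions of $\Psi_K$, $\widetilde{\Psi_K}$ through the $\natural$-compatible $\Phi_K$ --- then checks the inner products by the same formal computation $\pi(\xi\otimes\zeta)\cdot\widetilde{\pi(\xi'\otimes\zeta')}=\xi\cdot\zeta\cdot\widetilde{\zeta'}\cdot\widetilde{\xi'}$, obtains surjectivity from diagonal-times-general products together with the fact that $\Psi_K$, $\widetilde{\Psi_K}$ are isomorphisms, and concludes via the remark after \cite[Definition 1.1.18]{JT:KK}. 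Your additional touches (fixing $\Phi_{M\otimes_A K}$ via Lemma~\ref{lem:def1} and spelling out the $C_X$-balancing) are refinements of steps the paper treats as routine, not a different method.
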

\begin{proof} Let $\pi$ be the linear map from $C_M \otimes_{C_X}C_K$ onto $C_{M\otimes_A K}$
defined by
\begin{align*}
& \pi(\begin{bmatrix} m_1 & m_2 \otimes x \\
m_2 \otimes\widetilde{x}^{\natural} & m_1 \end{bmatrix}\otimes
\begin{bmatrix} k_1 & y \otimes k_2 \\
\widetilde{y}^{\natural}\otimes k_2 & k_1 \end{bmatrix}) \\
& =\begin{bmatrix} m_1 & m_2 \otimes x \\
m_2 \otimes\widetilde{x}^{\natural} & m_1 \end{bmatrix}\cdot
\begin{bmatrix} k_1 & y \otimes k_2 \\
\widetilde{y}^{\natural}\otimes k_2 & k_1 \end{bmatrix} \\
& =\begin{bmatrix} m_1 \otimes k_1 +m_2 \otimes x\otimes \widetilde{y}^{\natural}\otimes k_2 &
m_1 \otimes y\otimes k_2 +m_2 \otimes x\otimes k_1 \\
m_2 \otimes\widetilde{x}^{\natural}\otimes k_1 +m_1 \otimes\widetilde{y}^{\natural}\otimes k_2 &
m_2 \otimes\widetilde{x}^{\natural}\otimes y\otimes k_2 +m_1 \otimes k_1 \end{bmatrix} ,
\end{align*}
for any $\begin{bmatrix} m_1 & m_2 \otimes x \\
m_2 \otimes\widetilde{x}^{\natural} & m_1 \end{bmatrix}\in C_M^X$ and
$\begin{bmatrix} k_1 & y \otimes k_2 \\
\widetilde{y}^{\natural}\otimes k_2 & k_1 \end{bmatrix}\in{}^X C_K$,
where we regard the tensor product as a product on $C_M$ in the formal manner. But
we identify $A\otimes_A K$ and $X\otimes_A \widetilde{X}$, $\widetilde{X}\otimes_A X$ with $K$ and $A$
by the isomorphisms defined by
\begin{align*}
& a\otimes k\in A\otimes_A K \mapsto a\cdot k \in K , \\
& x\otimes \widetilde{y}\in X\otimes_A \widetilde{X}\mapsto {}_A \la x, y \ra\in A , \\
& \widetilde{x}\otimes y\in \widetilde{X}\otimes_A X\mapsto \la x, y \ra_A \in A,
\end{align*}
respectively. Furthermore, we identify $X\otimes_A K$ and $\widetilde{X}\otimes_A K$ with
$K\otimes_A X$ and $K\otimes_A \widetilde{X}$ as $A-A$-equivalence bimodules by
$\Psi_K$ and $\widetilde{\Psi_K}$, which are defined as above, respectively. Thus,
\begin{align*}
& \pi(\begin{bmatrix} m_1 & m_2 \otimes x \\
m_2 \otimes\widetilde{x}^{\natural} & m_1 \end{bmatrix}\otimes
\begin{bmatrix} k_1 & y \otimes k_2 \\
\widetilde{y}^{\natural}\otimes k_2 & k_1 \end{bmatrix}) \\
& =\begin{bmatrix} m_1 \otimes k_1 +m_2 \otimes {}_A \la x, y^{\natural} \ra \cdot k_2 &
m_1 \otimes \Psi_K (y\otimes k_2 )+m_2 \otimes\Psi_K (x\otimes k_1 ) \\
m_2 \otimes \widetilde{\Psi_K}(\widetilde{x}^{\natural}\otimes k_1 )
+m_1 \otimes\widetilde{\Psi_K}(\widetilde{y}^{\natural}\otimes k_2 ) &
m_2 \otimes \la x^{\natural}, y \ra_A \cdot k_2 +m_1 \otimes k_1 \end{bmatrix} .
\end{align*}
Then by routine computations,
\begin{align*}
& {}_A \la x, y^{\natural} \ra=\la x^{\natural}, y \ra_A , \\
& \widetilde{\Psi_K}(\widetilde{x}^{\natural}\otimes k_1 ) =\sum_{i=1}^n u_i \otimes
\Phi_K (\widetilde{u_i}\otimes x\otimes k_1 )^{\widetilde{}\natural} ,
\end{align*}
\begin{align*}
& \Psi_K (x\otimes k_1 )=\sum_{i=1}^n u_i \otimes \Phi_K (\widetilde{u_i}\otimes x\otimes k_1 ) , \\
& \widetilde{\Psi_K}(\widetilde{y}^{\natural}\otimes k_2 ) =\sum_{i=1}^n u_i \otimes \Phi_K (\widetilde{u_i}
\otimes y\otimes k_2 )^{\widetilde{}\natural} , \\
& \Psi_K (y\otimes k_2 ) =\sum_{i=1}^n u_i \otimes \Phi_K (\widetilde{u_i}\otimes y\otimes k_2 ) ,
\end{align*}
where $\{u_i \}_{i=1}^n$ is a finite subset of $K$ with $\sum_{i=1}^n {}_A \la u_i , u_i \ra =1$ and
$\Phi_K$ and $\widetilde{\Phi_K}$ are as defined in the above. Hence $\pi$ is a linear map from
$C_M \otimes_{C_X}C_K$ to $C_{M\otimes_A K}$. Next, we show that $\pi$ is surjective. We take
elements
$$
\begin{bmatrix} m_1 & m_2 \otimes x \\
m_2 \otimes\widetilde{x}^{\natural} & m_1 \end{bmatrix}\in C_M^X \,, \quad
\begin{bmatrix} k_1 & 0 \\ 0 & k_1 \end{bmatrix}\in {}^X \!C_K .
$$
Then
\begin{align*}
& \pi(\begin{bmatrix} m_1 & m_2 \otimes x \\
m_2 \otimes\widetilde{x}^{\natural} & m_1 \end{bmatrix}\otimes
\begin{bmatrix} k_1 & 0 \\ 0 & k_1 \end{bmatrix}) \\
& =\begin{bmatrix} m_1 \otimes k_1 & m_2 \otimes\Psi_K (x\otimes k_1 ) \\
m_2 \otimes\widetilde{\Psi_K}(\widetilde{x}^{\natural}\otimes k_1 ) & m_1 \otimes k_1
\end{bmatrix} .
\end{align*}
We also take elements
$$
\begin{bmatrix} 0 & m_2 \otimes y \\
m_2 \otimes \widetilde{y}^{\natural} & 0 \end{bmatrix}\in C_M^X \,, \quad
\begin{bmatrix} k_2 & 0 \\
0 & k_2 \end{bmatrix}\in {}^X \! C_K .
$$
Then
\begin{align*}
& \pi(\begin{bmatrix} 0 & m_2 \otimes y \\
m_2 \otimes \widetilde{y}^{\natural} & 0 \end{bmatrix}\otimes
\begin{bmatrix} k_2 & 0 \\
0 & k_2 \end{bmatrix}) \\
& =\begin{bmatrix} 0 & m_2 \otimes\Psi_K (y\otimes k_2) \\
m_2 \otimes\widetilde{\Psi_K}(\widetilde{y}^{\natural}\otimes k_2) & 0 \end{bmatrix} .
\end{align*}
Thus
\begin{align*}
& \pi(\begin{bmatrix} m_1 & m_2 \otimes x \\
m_2 \otimes\widetilde{x}^{\natural} & m_1 \end{bmatrix} \otimes
\begin{bmatrix} k_1 & 0 \\
0 & k_1 \end{bmatrix}
+ \begin{bmatrix} 0 & m_2 \otimes y \\
m_2 \otimes\widetilde{y}^{\natural} & 0 \end{bmatrix} \otimes
\begin{bmatrix} k_2 & 0 \\
0 & k_2 \end{bmatrix} ) \\
& =\begin{bmatrix} m_1 \otimes k_1 & m_2\otimes\Psi_K (x\otimes k_1 +y\otimes k_2 ) \\
m_2 \otimes\widetilde{\Psi_K}(\widetilde{x}^{\natural}\otimes k_1 +\widetilde{y}^{\natural}\otimes k_2 ) &
m_1 \otimes k_1 \end{bmatrix} .
\end{align*}
Since $\Psi_K$ and $\widetilde{\Psi_K}$ are isomorphisms of $X\otimes_A K$ and
$\widetilde{X}\otimes_A K$ onto $K\otimes_A X$ and $K\otimes_A \widetilde{X}$, respectively,
we can see that $\pi$ is surjective. Furthermore, by the definitions of $\pi$ and the left and the right
$A$-valued inner products on $C_M$, $C_K$ and $C_{M\otimes K}$, we can easily see that $\pi$
preserves the left and the right $A$-valued inner products.
Indeed, let ${\mathcal M}, {\mathcal M_1}\in C_M$ and ${\mathcal K}, {\mathcal K_1}\in C_K$.
Then
\begin{align*}
{}_{C_X} \la \pi({\mathcal M}\otimes{\mathcal K}) \, , \, \pi({\mathcal M_1}\otimes{\mathcal K_1}) \ra
& ={}_{C_X} \la {\mathcal M}\cdot{\mathcal K} \, , \, {\mathcal M_1}\cdot{\mathcal K_1} \ra \\
& ={\mathcal M}\cdot{\mathcal K}\cdot({\mathcal M_1}\cdot{\mathcal K_1})^{\widetilde{}}
={\mathcal M}\cdot{\mathcal K}\cdot\widetilde{{\mathcal K_1}}\cdot\widetilde{{\mathcal M_1}} .
\end{align*}
Also,
\begin{align*}
{}_{C_X} \la {\mathcal M}\otimes{\mathcal K} \, , \, {\mathcal M_1}\otimes{\mathcal K_1} \ra
& ={}_{C_X} \la {\mathcal M} \cdot{}_{C_X} \la {\mathcal K} \, ,\, {\mathcal K_1} \ra \, , \, {\mathcal M_1} \ra \\
& ={}_{C_A} \la {\mathcal M}\cdot{\mathcal K}\cdot\widetilde{{\mathcal K_1}} \, , \, {\mathcal M_1} \ra
={\mathcal M}\cdot{\mathcal K}\cdot\widetilde{{\mathcal K_1}}\cdot\widetilde{{\mathcal M_1}} .
\end{align*}
Hence $\pi$ preserves the left $C_X$-valued inner products. Similarly, we can see that $\pi$
preserves the right $C_X$-valued inner products. Therefore, $\pi$ is a $C_X - C_X$-equivalence
bimodule isomorphism of $C_M \otimes_{C_X}C_K$ onto $C_{M\otimes_A K}$ by the remark after
\cite [Definition 1.1.18]{JT:KK}.
\end{proof}

\begin{prop}\label{prop:homo1} With the above notation, $g$ is a homomorphism of $\Ima f_A$
to $\Pic(A, C_X )$ with $f_A \circ g=\id$ on $\Ima f_A$.
\end{prop}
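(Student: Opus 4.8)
The plan is to verify the two assertions in turn, of which the relation $f_A \circ g = \id$ is immediate. Indeed, for any $[M] \in \Ima f_A$ the definitions of $g$ and of $f_A$ give
$$
(f_A \circ g)([M]) = f_A([M, C_M]) = [M],
$$
so $f_A \circ g = \id$ on $\Ima f_A$ with no further work. Recall that $g$ is already known to be well defined by Lemmas \ref{lem:def1} and \ref{lem:def2}, and that $\Ima f_A$ is a subgroup of $\Pic(A)$, so it remains only to show that $g$ respects products.

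For the homomorphism property I would fix $A$-$A$-equivalence bimodules $M$ and $K$ with $\widetilde{M} \otimes_A X \otimes_A M \cong X$ and $\widetilde{K} \otimes_A X \otimes_A K \cong X$ as involutive $A$-$A$-equivalence bimodules, and compare $g([M][K])$ with $g([M])g([K])$. On one side, the product in $\Pic(A)$ gives $[M][K] = [M \otimes_A K]$, whence $g([M][K]) = [M \otimes_A K, C_{M \otimes_A K}]$. On the other side, the product in $\Pic(A, C_X)$ recalled in Section \ref{sec:pre} gives
$$
g([M])g([K]) = [M, C_M][K, C_K] = [M \otimes_A K, \, C_M \otimes_{C_X} C_K].
$$
Thus everything reduces to the single equality $[M \otimes_A K, C_M \otimes_{C_X} C_K] = [M \otimes_A K, C_{M \otimes_A K}]$ in $\Pic(A, C_X)$.

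The key input is Lemma \ref{lem:def3}, which furnishes a $C_X$-$C_X$-equivalence bimodule isomorphism $\pi$ of $C_M \otimes_{C_X} C_K$ onto $C_{M \otimes_A K}$. To promote this to an equality of classes in $\Pic(A, C_X)$, I must check that $\pi$ carries the distinguished closed subspace onto the distinguished closed subspace by an $A$-$A$-equivalence bimodule isomorphism, that is, that $\pi$ respects the pair structure defining the relation $\sim$ on $\Equi(A, C_X)$. Following the identification recalled in Section \ref{sec:pre}, the subspace $M \otimes_A K$ of $C_M \otimes_{C_X} C_K$ is the closed span of the tensors $m \otimes k$ with $m$ and $k$ ranging over the diagonal copies of $M$ in $C_M$ and of $K$ in $C_K$. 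The explicit formula for $\pi$ then sends the diagonal tensor $\begin{bmatrix} m_1 & 0 \\ 0 & m_1 \end{bmatrix} \otimes \begin{bmatrix} k_1 & 0 \\ 0 & k_1 \end{bmatrix}$ to $\begin{bmatrix} m_1 \otimes k_1 & 0 \\ 0 & m_1 \otimes k_1 \end{bmatrix}$, that is, to the diagonal copy of $m_1 \otimes k_1 \in M \otimes_A K$ inside $C_{M \otimes_A K}$, so that $\pi$ restricts to the canonical identification of $M \otimes_A K$ with itself.

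The step I expect to demand the most care is exactly this last one: confirming that $\pi$ preserves these diagonal subspaces and intertwines their $A$-$A$-bimodule structures, not merely the ambient $C_X$-$C_X$-bimodule structures. Once that is in hand, $\pi$ exhibits $(M \otimes_A K, C_M \otimes_{C_X} C_K) \sim (M \otimes_A K, C_{M \otimes_A K})$, so the two classes agree and
$$
g([M])g([K]) = [M \otimes_A K, C_{M \otimes_A K}] = g([M \otimes_A K]) = g([M][K]).
$$
Together with the computation of $f_A \circ g$ above, this shows that $g$ is a homomorphism of $\Ima f_A$ into $\Pic(A, C_X)$ with $f_A \circ g = \id$ on $\Ima f_A$, as required.
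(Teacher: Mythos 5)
Your proposal is correct and follows essentially the same route as the paper: the paper's proof is simply ``immediate by Lemma \ref{lem:def3} and the definition of $g$,'' and your argument spells out exactly what that entails, including the check (left implicit in the paper) that the isomorphism $\pi$ of Lemma \ref{lem:def3} carries the diagonal copy of $M\otimes_A K$ in $C_M \otimes_{C_X} C_K$ onto the distinguished subspace of $C_{M\otimes_A K}$, so that the classes agree in $\Pic(A, C_X)$.
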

\begin{proof} This is immediate by Lemma \ref{lem:def3} and the definition of $g$.
\end{proof}

We give the main result of this paper.

\begin{thm}\label{thm:homo2} Let $A$ be a unital $C^*$-algebra and $X$ an involutive $A-A$-equivalence
bimodule. Let $A\subset C_X$ be the unital inclusion of unital $C^*$-algebras induced by
$X$. We suppose that $A' \cap C_X =\BC1$. Let $f_A$ be the homomorphism of $\Pic (A, C_X )$
to $\Pic(A)$ defined by
$$
f_A ([M, N])=[M]
$$
for any $(M, N)\in \Equi(A, C_X )$. Then $\Pic(A, C_X )$ is isomorphic to a semi-direct product
group of $\BT$ by the group
\begin{align*}
\{[M]\in\Pic(A) \, | \,  & \text{$M$ is an $A-A$-equivalence bimodule with }
\text{$X\cong\widetilde{M}\otimes_A X\otimes_A M$} \\
& \text{as involutive $A-A$-equivalence bimodules}\} .
\end{align*}
\end{thm}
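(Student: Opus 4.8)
The plan is to assemble the three computations of the preceding sections into a single short exact sequence of groups and to show that it splits. First I would record the exact sequence
$$
1 \longrightarrow \Ker f_A \longrightarrow \Pic(A, C_X ) \stackrel{f_A}{\longrightarrow} \Ima f_A \longrightarrow 1 ,
$$
which is exact because $f_A$ is, by definition, a homomorphism of $\Pic(A, C_X )$ onto its image $\Ima f_A$. By Proposition \ref{prop:ker3} we may identify $\Ker f_A$ with $\BT 1$, and by Theorem \ref{thm:image} we may identify $\Ima f_A$ with precisely the group displayed in the statement, namely the isomorphism classes $[M]$ of $A-A$-equivalence bimodules $M$ satisfying $X\cong\widetilde{M}\otimes_A X\otimes_A M$ as involutive $A-A$-equivalence bimodules.

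Next I would invoke the homomorphism $g$ of $\Ima f_A$ into $\Pic(A, C_X )$ constructed in Section \ref{sec:homo}. By Proposition \ref{prop:homo1}, $f_A \circ g = \id$ on $\Ima f_A$, so $g$ is a set-theoretic and group-theoretic section of $f_A$; in particular $g$ is injective and $g(\Ima f_A )$ is a subgroup of $\Pic(A, C_X )$ isomorphic to $\Ima f_A$. It then remains to verify the two standard conditions for an internal semidirect product. Since $\Ker f_A$ is a normal subgroup and, for any $z\in\Pic(A, C_X )$, the element $z\cdot g(f_A (z))^{-1}$ lies in $\Ker f_A$, one obtains $\Pic(A, C_X ) = \Ker f_A \cdot g(\Ima f_A )$; and if $x\in \Ker f_A \cap g(\Ima f_A )$, then writing $x = g(y)$ gives $y = f_A (g(y)) = f_A (x) = 1$, hence $x = 1$, so the intersection is trivial.

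These two facts show that $\Pic(A, C_X )$ is the internal semidirect product of the normal subgroup $\Ker f_A \cong \BT$ by the complement $g(\Ima f_A )\cong \Ima f_A$, which is exactly the assertion. I do not expect any genuine obstacle at this final stage: all of the substantive work has already been carried out in the earlier sections, namely the identification $\Ker f_A \cong \BT 1$, the description of $\Ima f_A$, and the construction of the splitting $g$. The only thing to be careful about is that $g$ is a genuine group-theoretic section and not merely a set map, but this is precisely the content of Proposition \ref{prop:homo1} together with the multiplicativity established via Lemma \ref{lem:def3}, so the theorem follows formally from the splitting lemma for groups.
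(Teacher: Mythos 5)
Your proposal is correct and follows exactly the paper's route: the paper's proof of this theorem is simply ``immediate by Proposition \ref{prop:ker3}, Theorem \ref{thm:image} and Proposition \ref{prop:homo1},'' and your argument just spells out the standard splitting-lemma verification (normality of $\Ker f_A$, the decomposition $z\cdot g(f_A(z))^{-1}\in\Ker f_A$, and triviality of $\Ker f_A\cap g(\Ima f_A)$) that makes this immediate. Nothing is missing.
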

\begin{proof} This is immediate by Proposition \ref{prop:ker3}, Theorem \ref{thm:image} and
Proposition \ref{prop:homo1}.
\end{proof}

\end{document}